\newcommand{\mbs}[1]{\boldsymbol{#1}}
\newcommand{\mbf}[1]{\mathbf{ #1}}
\newcommand{\tbf}[1]{\textbf{#1}}
\newcommand{\mcl}[1]{\mathcal{#1}}
\newcommand{\mscr}[1]{\mathscr{#1}}
\newcommand{\R}{\mathbb{R}}
\newcommand{\N}{\mathbb{N}}
\newcommand{\norm}[1]{\left\lVert{#1}\right\rVert}
\newcommand{\ip}[2]{\left\langle #1, #2 \right\rangle}
\newcommand{\bmat}[1]{\begin{bmatrix}#1\end{bmatrix}}
\newcommand{\smallbmat}[1]{\left[\scriptsize\begin{smallmatrix}
		#1\end{smallmatrix} \right]}
\newcommand{\smallmat}[1]{\scriptsize\begin{smallmatrix}
		#1\end{smallmatrix}}
\newcommand{\srbmat}[1]{\small\left[\!\!\!\begin{array}{r}#1\end{array}\!\!\!\right]}
\newcommand{\slbmat}[1]{\small\left[\!\!\!\begin{array}{l}#1\end{array}\!\!\!\right]}
\newtheorem{thm}{Theorem}
\newtheorem{defn}[thm]{Definition}
\newtheorem{lem}[thm]{Lemma}
\newtheorem{prop}[thm]{Proposition}
\newtheorem{cor}[thm]{Corollary}
\let\bl\bigl
\let\bbl\Bigl
\let\bbbl\biggl
\let\bbbbl\Biggl
\let\br\bigr
\let\bbr\Bigr
\let\bbbr\biggr
\let\bbbbr\Biggr
\newcommand{\enn}[1]{\text{n}_{\text{#1}}}
\newcommand\Resize[2]{\resizebox{#1}{!}{\mbox{\ensuremath{\displaystyle #2}}}}
\title{\LARGE \bf
 	Representation of PDE Systems with Delay and Stability
 	Analysis using Convex Optimization -- Extended Version
}
\author{Declan S. Jagt, Matthew M. Peet %
	\thanks{\tbf{Acknowledgement:} This work was supported by National Science Foundation grant CMMI-1935453.} %
}% <-this % stops a space
\begin{document}

\maketitle
\thispagestyle{empty}
\pagestyle{empty}

\begin{abstract}
	Partial Integral Equations (PIEs) have been used to represent both systems with delay and systems of Partial Differential Equations (PDEs) in one or two spatial dimensions. In this paper, we show that these results can be combined to obtain a PIE representation of any suitably well-posed 1D PDE model with constant delay. In particular, we represent these delayed PDE systems as coupled systems of 1D and 2D PDEs, obtaining a PIE representation of both subsystems. Taking the feedback interconnection of these PIE subsystems, we then obtain a 2D PIE representation of the 1D PDE with delay. 
	Next, based on the PIE representation, we formulate the problem of stability analysis as convex optimization of positive operators which can be solved using the PIETOOLS software suite. We apply the result to PDE examples with delay in the state and boundary conditions.
\end{abstract}

%===============================================================================

%	\vspace*{-0.2cm}
\section{INTRODUCTION}
	
We consider the problem of analysis of coupled systems of Ordinary Differential Equations (ODEs) and Partial Differential Equations (PDEs). Such ODE-PDE systems are frequently used to model physical processes, modeling the dynamics on the interior of the domain with the PDE, and the dynamics at the boundaries using the ODE.

In both modeling and control of PDE systems, the evolution of the system often depends on the internal state of the system at earlier points in time, giving rise to delays in different components of the model. For example, these delays may be inherent to the dynamics of the system itself, appearing within the PDE (sub)system, as in the following wave equation,
\begin{align*}
	\mbf{u}_{tt}(t,x)&=\mbf{u}_{xx}(t-\tau,x),\\
	\mbf{u}(t,1)&=-\mbf{u}(t,x),\qquad \mbf{u}(t,0)=0.
\end{align*}
Alternatively, delay may occur in the interaction between coupled systems, explicitly appearing in the Boundary Conditions (BCs) of the PDE,
\begin{align*}
	\dot{w}(t)&=-w(t)+\mbf{u}_{x}(t,1),\\
	\mbf{u}_{tt}(t,x)&=\mbf{u}_{xx}(t,x),\\
	\mbf{u}_{t}(t,1)&=-w(t-\tau),\qquad \mbf{u}(t,0)=0.
\end{align*}
or defining the dynamics at the boundary of the domain,
\begin{align*}
	\dot{w}(t)&=-w(t-\tau)+\mbf{u}_x(t,1),\\
	\mbf{u}_{tt}(t,x)&=\mbf{u}_{xx}(t,x),\\
	\mbf{u}_{t}(t,1)&=-w(t),\qquad \mbf{u}(t,0)=0.
\end{align*}
In each case, the presence of delays naturally complicates analysis of solution properties such as stability of the system, as at any time $t\geq 0$, the state of the system involves not only the current value of the state $\mbf{u}(t)$, but also the value of the ODE and PDE states $w(s)$ and $\mbf{u}(s)$ at all $s\in[t-\tau,t]$.

To verify stability of PDEs with delay, one common approach involves testing for existence of a positive definite functional $V$ that decays along solutions to the system -- i.e. a Lyapunov-Krasovskii Functional (LKF)~\cite{kolmanovskii1999DDEs}. In particular, for a delayed PDE with state $\mbf{u}(t)$ and delayed state $\mbs{\phi}(t)$ defined by $\mbs{\phi}(t,s):=\mbf{u}(t-s\tau)$ for $s\in[0,1]$, the challenge of proving stability then becomes that of finding a functional $V(\mbf{u},\mbs{\phi})$ that satisfies $V(0)=0$, $V(\mbf{u},\mbs{\phi})> 0$ for $(\mbf{u},\mbs{\phi})\neq 0$, and $\dot{V}(\mbf{u}(t),\mbs{\phi}(t))\leq 0$ along all solutions $(\mbf{u},\mbs{\phi})$ to the system.
In practice, a candidate LKF $V> 0$ is usually fixed a priori, often as some variation on the energy functional $V(\mbf{u},\mbs{\phi})=\|\mbf{u}\|^2+\|\mbs{\phi}\|^2$, and then proven to decay along solutions to a system of interest. 
Although stability properties of a variety of PDEs with delay have been proven this way, including for heat and wave equations with both time-varying and constant delay~\cite{nicaise2009stability_waveeq_heateq_BCdelay,ammari2010stabilization_waveeq}, 
results obtained in this manner are difficult to extend to other systems.
Specifically, a LKF that certifies stability for one system may not be valid for another, and identifying a suitable candidate LKF for a given system requires significant insight.

To test existence of LKFs for more general PDEs with delay, a cone of positive candidate functionals $V>0$ is often parameterized by positive definite matrices $P\succ 0$. The challenge in testing stability then becomes that of enforcing decay of the functionals, $\dot{V}\leq 0$, as a Linear Matrix Inequality (LMI), $Q\preceq 0$,
which can be efficiently solved using semidefinite programming.
Unfortunately, enforcing $\dot{V}\leq 0$ along solutions to a PDE with delay as an LMI is complicated by the fact that PDE dynamics are defined by (unbounded) differential operators, and that solutions are constrained to satisfy BCs.
As such, most prior work in this field focuses only on specific PDEs with delay, exploiting the structure of the PDE (parabolic, hyperbolic, elliptic) and the type of BCs (Dirichlet, Neumann, Robin) to enforce $\dot{V}\leq 0$.
For example, stability tests for heat and wave equations were derived in~\cite{fridman2009stabilityDPDE}, using the Wirtinger inequality to prove LMI constraints for negativity $\dot{V}\leq 0$. Using a similar approach, LMIs for stability of linear and semi-linear diffusive PDEs with delay were derived in~\cite{wang2014PDDEstability,wang2018PDDEstability,solomon2015PDDEstability}, as well as for reaction-diffusion systems with delayed boundary inputs in~\cite{lhachemi2021outputfeedackstabilization}. 
	
The disadvantage of these approaches, however, is that the results are again valid only for a restricted class of systems, and rely on the use of specific inequalities (e.g. Wirtinger, Jensen, Poincar\'e) to enforce $\dot{V}\leq 0$. Extending these results to even slightly different models, then, may require significant expertise from the user.

In this paper, we propose an alternative, LMI-based method for testing stability of a general class of linear ODE-PDE systems with fixed, constant delay, by representing them as Partial Integral Equations (PIEs). A PIE is an alternative representation of linear ODE-PDE systems, taking the form
\begin{align*}
	\mcl{T}\mbf{v}_{t}(t)&=\mcl{A}\mbf{v}(t),
\end{align*}
where the operators $\{\mcl{T},\mcl{A}\}$ are Partial Integral (PI) operators. In \cite{peet2021PIE_representation} and \cite{jagt2021PIEArxiv}, it was shown that the sets of 1D and 2D PI operators form *-algebras, meaning that the sum, composition, and adjoint of such PI operators is a PI operator as well. As such, parameterizing Lyapunov functionals $V(\mbf{v}(t))=\ip{\mcl{T}\mbf{v}(t)}{\mcl{P}\mcl{T}\mbf{v}(t)}$ by PI operators $\mcl{P}\succ 0$, the decay condition $\dot{V}\leq 0$ along solutions to the PIE can be enforced as a Linear PI Inequality (LPI)
\begin{align}\label{eq:stability_LPI_intro}
	\mcl{T}^*\mcl{P}\mcl{A}+\mcl{A}^*\mcl{P}\mcl{T}\leq 0,
\end{align}
Such LPIs constitute a specific class of linear operator inequalities (introduced for stability analysis of PDEs with delay in~\cite{fridman2009stabilityDPDE}), wherein the operator variable $\mcl{P}$ has the structure of a PI operator. Since the fundamental state $\mbf{v}(t)\in L_2$ in the PIE representation is not constrained by e.g. BCs, these LPI constraints need only be enforced on $L_{2}$.
Parameterizing positive PI operators $\mcl{P}\succeq 0$ by positive matrices $P\succeq0$, then, LPIs can be readily tested as LMIs, allowing problems of stability analysis~\cite{peet2021PIE_representation}, optimal control~\cite{shivakumar2020PIE_duality}, and optimal estimation~\cite{wu2023PIE_estimation} to be solved using convex optimization.

In~\cite{peet2021DDEs_PIEs}, it was shown that a general class of linear Delay Differential Equations (DDEs) can be equivalently represented as PIEs. Similarly, in~\cite{shivakumar2022GPDE_Arxiv}, it was shown that any suitably well-posed PDE system without delay can also be equivalently represented as a PIE. However, constructing a PIE representation for 1D PDE systems with delay is complicated by the fact that the delayed state $\mbs{\phi}(t,s,x)=\mbf{u}(t-s,x)$ in this case varies in two spatial variables. To address this problem, in this paper, we decompose the delayed PDE into a feedback interconnection of a 1D PDE and a 2D transport equation, where the interconnection signals are infinite-dimensional. We prove that each of these subsystems can be equivalently represented as an associated PIE with infinite-dimensional inputs and outputs, extending prior work on PIE input-output systems to the case of infinite-dimensional inputs and outputs. Next, we consider the feedback interconnection of PIEs with infinite-dimensional inputs and outputs, and derive formulae for the resulting closed-loop PIE. Finally, paramaterizing a LKF by PI operators, we establish stability conditions expressed as a convex optimization program, subject to LPI constraints. These LPIs are then converted to semidefinite programming problems using the PIETOOLS software package and tested on several examples of delayed PDE systems.

\section{Problem Formulation}

\subsection{Notation}

For a given domain $\Omega\subset\R^d$, let $L_2^n[\Omega]$ and $L_{\infty}^{n}[\Omega]$ denote the sets of $\R^n$-valued square-integrable and bounded functions on $\Omega$, respectively, where we omit the domain when clear from context. Define intervals $\Omega_{a}^{b}:=[a,b]$ and $\Omega_{c}^{d}:=[c,d]$, and let $\Omega_{ac}^{bd}:=\Omega_{a}^{b}\times\Omega_{c}^{d}$. % be the corresponding 2D domain. 
For $\text{k}=(k_1,k_2)\in\N^2$, define Sobolev subspaces $H_{k_1}^n[\Omega_{a}^{b}]$ and $H_{\text{k}}^{n}[\Omega_{ab}^{cd}]$ of $L_2^{n}$ as
\begin{align*}
	&H_{k_1}^{n}[\Omega_{a}^{b}]=\bl\{\mbf{v}\mid \partial_x^{\alpha}\mbf{v}\!\in\! L_2^n[\Omega_{a}^{b}],\ \forall \alpha \in\N:\! \alpha\!\leq k_1\br\},	\\
	%	H_{\text{k}}^{n}[\Omega_{ac}^{bd}]\!&=\!\bbl\{\mbf{v}\mid \partial_x^{\alpha_1}\partial_y^{\alpha_2}\mbf{v}\!\in\! L_2^n[\Omega_{ac}^{bd}],\ \forall \bl(\smallmat{\alpha_1\\\alpha_2}\br)\!\in\!\N^2:\! \smallmat{\alpha_1\leq k_1\\\alpha_2\leq k_2}\bbr\}.\\
	&H_{\text{k}}^{n}[\Omega_{ab}^{cd}]=\bbl\{\!\mbf{v}\!\ \bbr\rvert\  \partial_x^{\alpha_1}\partial_s^{\alpha_2}\mbf{v}\!\in\! L_2^n[\Omega_{ab}^{cd}],\ \forall \bl[{\smallmat{\alpha_1\\\alpha_2}}\br] \!\in\!\N^2:\! \smallmat{\alpha_1\leq k_1\\\alpha_2\leq k_2}\bbr\}.
\end{align*}

\subsection{Objectives and Approach}

\begin{figure*}[th!]
	\centering
	\hspace*{-0cm}\includegraphics[width=1.0\textwidth]{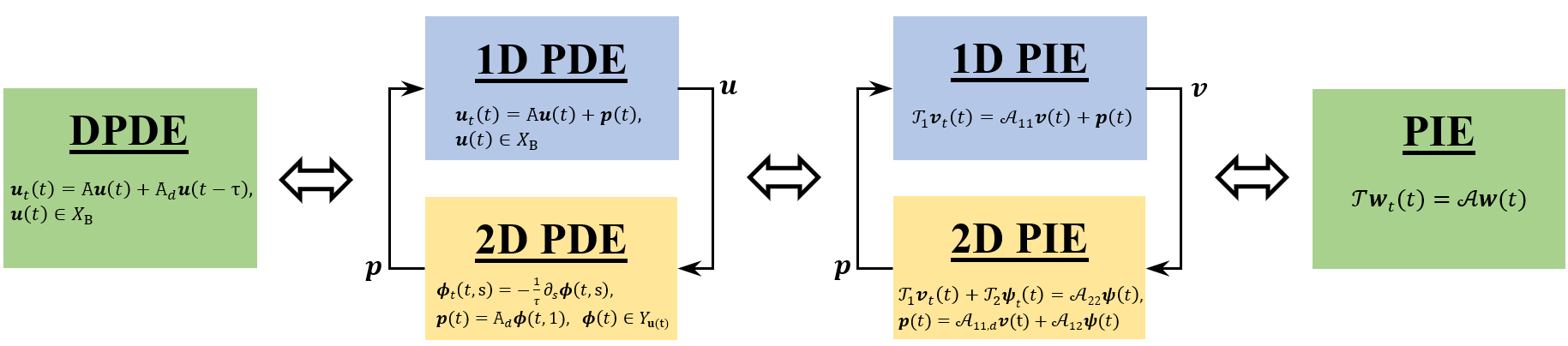}
	\caption{Block diagram representation of delayed PDE~\eqref{eq:DPDE_N2}. In Section~\ref{sec:PIE}, we expand the Delayed PDE (DPDE) as the interconnection of a 1D and 2D PDE. In Subsection~\ref{sec:PIE:1D} and Subsection~\ref{sec:PIE:1D}, we prove that the 1D and 2D subsystems can be equivalently represented as PIEs, respectively. Finally, In Section~\ref{sec:PIE_interconnection}, we prove that the feedback interconnection of these PIEs can be represented as a PIE as well, allowing us to test stability of the DPDE using the approach presented in Section~\ref{sec:stability}.}
	\label{fig:DPDE2PIE}
\end{figure*}

In this paper, we propose a framework for testing exponential stability of linear, 1D, 2nd order PDEs, with delay, focusing primarily on systems with delay in the dynamics. Specifically, we consider a delayed PDE of the form
\begin{align}\label{eq:DPDE_N2}
	&\mbf{u}_{t}(t,x)=A(x)\slbmat{\mbf{u}(t,x)\\\mbf{u}_{x}(t,x)\\ \mbf{u}_{xx}(t,x)} + A_{d}(x)\slbmat{\mbf{u}(t-\tau,x)\\\mbf{u}_{x}(t-\tau,x)\\ \mbf{u}_{xx}(t-\tau,x)},	\\
	&\mbf{u}(t)\in X_{B}[\Omega_{a}^{b}],\hspace*{3.0cm} t\geq 0,~x\in\Omega_{a}^{b},	\notag
	%	&\text{with BCs}\quad 0=B\bmat{\mbf{u}(t,a)&\mbf{u}_{x}(t,a)&\mbf{u}(t,b)&\mbf{u}_{x}(t,b)}^T,
\end{align}
where $A,A_{d}\in L_{\infty}^{n\times 3n}[\Omega_{a}^{b}]$, and where the PDE domain $X_{B}$ is constrained by boundary conditions and continuity constraints, and is defined by a matrix $B\in\R^{2n\times 4n}$ as
\begin{align}\label{eq:Xset}
	X_{B}[\Omega_{a}^{b}] := \left\{
	\mbf{u}\in H_{2}^{n}[\Omega_{a}^{b}] ~\bbbr\rvert~
	\small
	B\smallbmat{\mbf{u}(a)\\\mbf{u}(b)\\\mbf{u}_{x}(a)\\\mbf{u}_{x}(b)}
	= 0
	\right\},
\end{align}
where $B$ must be of full row-rank, defining sufficient and independent boundary conditions (see also Sec.~3.2 in~\cite{peet2021PIE_representation}).
In order to test stability of the delayed PDE~\eqref{eq:DPDE_N2}, we will first derive an equivalent representation of the system as a Partial Integral Equation (PIE), using the approach illustrated in Figure~\ref{fig:DPDE2PIE}. In particular, we take the following three steps:\\
\indent 1. First, in Section~\ref{sec:PIE}, we represent the Delayed PDE (DPDE) as the interconnection of a 1D PDE and a 2D PDE. \\
\indent 2. Then, in Subsection~\ref{sec:PIE:1D} and~\ref{sec:PIE:2D}, we derive equivalent 1D and 2D PIE representations of the 1D and 2D PDE subsystems, respectively. \\
\indent 3. Next, in Section~\ref{sec:PIE_interconnection}, we prove that the feedback interconnection of PIEs can be represented as a PIE as well, and take the interconnection of the 1D and 2D PIEs to obtain a PIE representation for the DPDE. 

The resulting PIE representation will be of the form
\begin{align}\label{eq:PIE_N2}
	\mcl{T}\mbf{w}_{t}(t,s,x)&=\mcl{A}\mbf{w}(t,s,x),	&	(s,x)&\in\Omega_{0a}^{1b},
\end{align}
wherein the state $\mbf{w}(t)\in L_2[\Omega_{a}^{b}]\times L_2[\Omega_{0a}^{1b}]$ is free of boundary conditions, and where the operators $\mcl{T}$ and $\mcl{A}$ are Partial Integral (PI) operators, defined as in Block~\ref{block:Top_Aop}. In Section~\ref{sec:stability}, we show that given such a PIE representation of a system, we can test stability of the system by solving a Linear PI operator Inequality (LPI) optimization program, defined by the operators $\mcl{T}$ and $\mcl{A}$. In particular, using this approach, we can test stability of the delayed PDE~\eqref{eq:DPDE_N2} as follows.
\begin{prop}\label{prop:stability_PDDE}
	Let $A,A_{d}\in L_{\infty}^{n\times 3n}$, $B\in\R^{2n\times 4n}$ and $\tau>0$ define a delayed PDE as in~\eqref{eq:DPDE_N2}. Define PI operators $\mcl{T},\mcl{A}$ as in Block~\ref{block:Top_Aop}. Suppose that there exist constants $\epsilon,\alpha>0$ and a PI operator $\mcl{P}$ such that $\mcl{P}=\mcl{P}^*$, $\mcl{P}\succeq \epsilon^2 I$, and
	\begin{align}%\label{eq:stability_LPI}
		\mcl{A}^*\mcl{P}\mcl{T}+\mcl{T}^*\mcl{P}\mcl{A}\preceq -2\alpha\mcl{T}^*\mcl{P}\mcl{T}.
	\end{align}
	Finally, let $\zeta=\sqrt{\|\mcl{P}\|_{\mcl{L}}}$.
	Then, for any solution $\mbf{u}$ to the PDE~\eqref{eq:DPDE_N2} with $\mbs{\phi}(t,s)=\mbf{u}(t-s\tau)$ for $s\in[0,1]$, we have
	\begin{align*}
		\norm{\slbmat{\mbf{u}(t)\\\mbs{\phi}(t)}}_{\text{Z}}\leq \frac{\zeta}{\epsilon}\norm{\slbmat{\mbf{u}(0)\\\mbs{\phi}(0)}}_{\text{Z}} e^{-\alpha t},\qquad \forall t\geq 0,
	\end{align*}
	where $\norm{\smallbmat{\mbf{u}(t)\\\mbs{\phi}(t)}}_{\text{Z}}^2=\|\mbf{u}(t)\|_{L_2}^2 +\int_{0}^{1}\|\mbs{\phi}(t,s)\|_{L_2}^2 ds$.
\end{prop}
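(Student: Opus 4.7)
The plan is a standard Lyapunov--Krasovskii argument carried out on the PIE side, with the storage functional chosen so that the LPI hypothesis converts directly into an exponential decay rate. Concretely, I would define
\[
V(\mbf{w}(t)) := \ip{\mcl{T}\mbf{w}(t)}{\mcl{P}\mcl{T}\mbf{w}(t)},
\]
where $\mbf{w}(t)$ is the fundamental PIE state satisfying $\mcl{T}\mbf{w}_{t}=\mcl{A}\mbf{w}$. Because the subsequent PIE construction is designed so that $\mcl{T}\mbf{w}(t)=\slbmat{\mbf{u}(t)\\\mbs{\phi}(t)}$ with $\mbs{\phi}(t,s)=\mbf{u}(t-s\tau)$, the coercivity $\mcl{P}\succeq \epsilon^{2}I$ immediately gives the lower bound $V(\mbf{w}(t))\geq \epsilon^{2}\norm{\slbmat{\mbf{u}(t)\\\mbs{\phi}(t)}}^{2}$, while boundedness with $\|\mcl{P}\|_{\mcl{L}}=\zeta^{2}$ yields the matching upper bound $V(\mbf{w}(t))\leq \zeta^{2}\norm{\slbmat{\mbf{u}(t)\\\mbs{\phi}(t)}}^{2}$.

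Next I would differentiate $V$ along trajectories. Formally interchanging $\partial_{t}$ with the inner product and using self-adjointness of $\mcl{P}$,
\begin{align*}
	\dot V(\mbf{w}(t)) &= \ip{\mcl{T}\mbf{w}_{t}}{\mcl{P}\mcl{T}\mbf{w}} + \ip{\mcl{T}\mbf{w}}{\mcl{P}\mcl{T}\mbf{w}_{t}}	\\
	&= \ip{\mbf{w}}{(\mcl{A}^{*}\mcl{P}\mcl{T}+\mcl{T}^{*}\mcl{P}\mcl{A})\mbf{w}},
\end{align*}
where I have used $\mcl{T}\mbf{w}_{t}=\mcl{A}\mbf{w}$ to move the time derivative onto $\mcl{A}$. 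The LPI hypothesis then yields $\dot V(\mbf{w}(t))\leq -2\alpha\,\ip{\mbf{w}}{\mcl{T}^{*}\mcl{P}\mcl{T}\mbf{w}} = -2\alpha V(\mbf{w}(t))$. Gr\"onwall's inequality gives $V(\mbf{w}(t))\leq V(\mbf{w}(0))e^{-2\alpha t}$, and combining this with the two-sided bounds on $V$ and taking square roots reproduces the claimed exponential estimate with constant $\zeta/\epsilon$.

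The main obstacle, in my view, is not the algebra above but the justification that $t\mapsto V(\mbf{w}(t))$ is in fact differentiable and that the formal computation of $\dot V$ is rigorous. This requires knowing that classical solutions $\mbf{u}$ of~\eqref{eq:PDE_N2}, together with the lifted state $\mbs{\phi}(t,s)=\mbf{u}(t-s\tau)$, correspond to strongly differentiable trajectories $\mbf{w}(t)$ of the PIE for which $\mcl{T}\mbf{w}_{t}(t)=\mcl{A}\mbf{w}(t)$ holds in the $L_{2}$ sense along the trajectory. Establishing this correspondence is really the substance of the PIE construction developed in the preceding sections; given that construction, everything else here is routine. A secondary but minor point is checking that the self-adjoint operator $\mcl{P}$ satisfies $\ip{\mbf{v}}{\mcl{P}\mbf{v}}\leq \|\mcl{P}\|_{\mcl{L}}\norm{\mbf{v}}^{2}$ on the product space $L_{2}[\Omega_{a}^{b}]\times L_{2}[\Omega_{0a}^{1b}]$, which follows from Cauchy--Schwarz and the definition of the induced operator norm. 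Beyond these regularity concerns, the argument is the textbook Lyapunov stability proof transcribed into the PIE operator calculus.
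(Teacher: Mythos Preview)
Your proposal is correct and mirrors the paper's own argument essentially line for line: the paper first proves the abstract PIE stability result (Thm.~\ref{thm:stability_as_LPI}) using exactly the Lyapunov functional $V(\mbf{v})=\ip{\mcl{T}\mbf{v}}{\mcl{P}\mcl{T}\mbf{v}}$, the same two-sided bounds, the same differentiation and substitution $\mcl{T}\mbf{v}_t=\mcl{A}\mbf{v}$, and Gr\"onwall, and then transfers the estimate to the DPDE via the identification $\mcl{T}\mbf{w}(t)=\smallbmat{\mbf{u}(t)\\\mbs{\phi}(t)}$ established in Cor.~\ref{cor:PIE_PDDE}. Your caveat about differentiability of $t\mapsto V$ is handled in the paper simply by the standing assumption that solutions are Fr\'echet differentiable.
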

In the following sections, we show how we arrive at this result, explicitly proving it in Cor.~\ref{cor:stability_PDDE}. In Section~\ref{sec:other_delay}, we will also briefly consider how other systems with delay, such as PDEs with delay in the boundary conditions or delay in a coupled ODE, can be converted to equivalent PIEs as well, allowing stability of those systems to be tested using a similar approach. A more detailed discussion on deriving this PIE representation can be found in the appendices. In Section~\ref{sec:numerical_examples}, we apply the proposed methodology to numerically test stability of several delayed PDE systems.

%In Section~\ref{sec:other_delay}, we briefly show how PDEs with different types of delay may likewise be converted to PIEs, allowing stability of those systems to be tested in a similar manner.

\begin{block*}
	\small
	Define $\mcl{T}:=\bmat{\mcl{T}_{1}&0\\\mcl{T}_{1}&\mcl{T}_{2}}$, $\mcl{A}:=\bmat{\mcl{A}_{11}+\mcl{A}_{11,\text{d}}&\!\mcl{A}_{12}\\0&\!\mcl{A}_{22}}$, 
	%where $\mcl{T}_{1},\mcl{A}_{11},\mcl{A}_{11,\text{d}}\in\Pi_{3}$ with
	where $\mcl{T}_{1}:=\mcl{P}_{\text{T}}$, $\mcl{A}_{11}:=\mcl{P}_{\text{A}}$, and  $\mcl{A}_{11,d}:=\mcl{P}_{\text{A}_{d}}$ are 3-PI operators (see Defn.~\ref{defn:3PI}), 
	and where for $\hat{\mbf{v}}\in L_2[\Omega_{0a}^{1b}]$, \\[-2.2em]
	\begin{flalign*}
		\hspace*{4.5cm}
		(\mcl{T}_{2}\hat{\mbf{v}})(s) &:=\int_{0}^{s}(\mcl{T}_{1}\hat{\mbf{v}}(\theta))d\theta,	
		&
		&(\mcl{A}_{12}\hat{\mbf{v}}):=\int_{0}^{1}(\mcl{A}_{11,\text{d}}\hat{\mbf{v}}(s))ds,
		&
		&(\mcl{A}_{22}\hat{\mbf{v}})(s):=-\frac{1}{\tau}(\mcl{T}_{1}\hat{\mbf{v}}(s)), \\[-2.6em]
	\end{flalign*}
	with parameters \\[-1.5em]
	\begin{align*}
		&\begin{array}{l}
			\text{T}:=\{0,T_{1},T_{2}\}	\\
			T_{1}(x,\theta):=(x-\theta)I_{n}+T_{2}(x,\theta),	\\
			T_{2}(x,\theta):=-K(x)(BH)^{-1}BQ(x,\theta),	\\
			K(x):=\bmat{I_{n}&(x-a)I_{n}},
		\end{array}		&
		&H\!:=\!\left[
		\begin{array}{ll}
			I_{n}&0\\I_{n}&(b-a)I_{n}\\0&I_{n}\\0&I_{n}
		\end{array}
		\right],	&
		&Q(x,\theta):=\!\left[\begin{array}{l}
			0\\(b-\theta)I_{n}\\0\\I_{n}
		\end{array}\right],	
		%\bmat{I_{n_2}&0\\I_{n_2}&(b-a)I_{n_2}\\0&I_{n_2}\\0&I_{n_2}}	
		&
		&\begin{array}{l}
			A_{0}(x):=A(x)\smallbmat{0\\0\\I_{n}},	\\		
			A_{d,0}(x):=A_{d}(x)\smallbmat{0\\0\\I_{n}},
		\end{array} \\[-1.6em]
	\end{align*}
	\begin{equation*}
		%		A_{0}(x):=A(x)\smallbmat{0\\0\\I_{n}},\qquad 
		%		A_{0,\text{d}}(x):=A_{\text{d}}(x)\smallbmat{0\\0\\I_{n}},	\qquad
		\begin{array}{l}
			\text{A}:=\{A_{0},A_{1},A_{2}\},	\\		
			\text{A}_{d}:=\{A_{d,0},A_{d,1},A_{d,2}\},
		\end{array}
		\hspace*{2.25cm}
		A_{j}(x,\theta):=A(x)\smallbmat{T_{j}(x,\theta)\\\partial_{x}T_{j}(x,\theta)\\0},\qquad
		A_{d,j}(x,\theta):=A_{d}(x)\smallbmat{T_{j}(x,\theta)\\\partial_{x}T_{j}(x,\theta)\\0},
		\qquad j\in\{1,2\}.
	\end{equation*}
	\caption{Operators $\mcl{T}$ and $\mcl{A}$ defining the PIE~\eqref{eq:PIE_N2} associated to the DPDE~\eqref{eq:DPDE_N2} with BCs as in~\eqref{eq:Xset}}\label{block:Top_Aop} 
\end{block*}

\section{A PIE Representation of Delayed PDEs}\label{sec:PIE}

%\subsection{1D-2D PDE Representation of Delayed PDEs}\label{sec:main:1D_2D_expansion}

In order to test stability of the DPDE~\eqref{eq:DPDE_N2}, we first derive a representation of this system wherein we model the delay using a transport equation. 
In particular, let $\mbs{\phi}(t,s)$ represent $\mbf{u}(t-s\tau)$ for $s\in[0,1]$. Then, $\mbf{u}(t)$ satisfies the PDE~\eqref{eq:DPDE_N2} if and only if the augmented state $(\mbf{u}(t),\mbs{\phi}(t))$ satisfies
\begin{align}\label{eq:PDDE_expanded_N2}
	\mbf{u}_{t}(t)&=\text{M}_{A} \slbmat{\mbf{u}(t)\\\mbf{u}_{x}(t)\\\mbf{u}_{xx}(t)} +\text{M}_{A_{\text{d}}} \slbmat{\mbs{\phi}(t,1)\\\mbs{\phi}_{x}(t,1)\\\mbs{\phi}_{xx}(t,1)},
	&	\mbf{u}(t)&\in X_{B},	\\
	\mbs{\phi}_{t}(t)&=-(1/\tau)\mbs{\phi}_{s}(t), 	& \mbs{\phi}(t)&\in Y_{\mbf{u}(t)},	\notag
\end{align}
where $\text{M}_{A}$ denotes the multiplier operator associated to $A\in L_{\infty}$, so that $(\text{M}_{A}\mbf{q})(x)=A(x)\mbf{q}(x)$ for $\mbf{q}\in L_2$, and
where we define the domain of the delayed state $\mbs{\phi}(t)$ as
\begin{equation}\label{eq:Yset_PDE}\Resize{0.91\linewidth}{
		Y_{\mbf{u}}\!:=\!\bbl\{\!
		\mbs{\phi}\in H_{(1,2)}^{n}[\Omega_{0a}^{1b}] \,\bbr\rvert\, \mbs{\phi}(0,x)=\mbf{u}(x),~\mbs{\phi}(s,.)\in X_{B} \!\bbr\}.}
\end{equation}
Given this expanded representation of the system, we define solutions to the DPDE~\eqref{eq:DPDE_N2} as follows.
\begin{defn}[Solution to the DPDE]
	For a given initial state $(\mbf{u}_{0},\mbs{\phi}_{0})\in X_{B}\times Y_{\mbf{u}_{0}}$, we say that $(\mbf{u},\mbs{\phi})$ is a solution to the DPDE defined by $\{A,A_{\text{d}},B,\tau\}$ if $(\mbf{u},\mbs{\phi})$ is Frech\'et differentiable, $(\mbf{u}(0),\mbs{\phi}(0))=(\mbf{u}_{0},\mbs{\phi}_{0})$, and for all $t\geq 0$,
	%$\bl(\mbf{u}(t),\mbs{\phi}(t)\br)\!\in\! X_{B}\times Y_{\mbf{u}(t)}$ and
	$\bl(\mbf{u}(t),\mbs{\phi}(t)\br)$ satisfies~\eqref{eq:PDDE_expanded_N2}.
\end{defn}

Although the expanded representation in~\eqref{eq:PDDE_expanded_N2} no longer involves explicit time-delay in the state, stability analysis in this representation is still complicated by the auxiliary constraints $\bl(\mbf{u}(t),\mbs{\phi}(t)\br)\in X_{B}\times Y_{\mbf{u}(t)}$. Therefore, in the following subsections, we will separately consider the dynamics of $\mbf{u}(t)$ and $\mbs{\phi}(t)$, representing these dynamics in an equivalent format free of auxiliary constraints -- as PIEs.

%To account for these constraints, in the following subsections, we will separately consider the 1D and 2D subsystems
%
%Therefore, in the following subsections, 
%
%we will derive equivalent PIE representations of both the 1D and 2D subsystems of this PDE, which are free of these auxiliary constraints.

%Therefore, in the following subsections, we derive an equivalent representation of the system that is free of these auxiliary constraints, by separately deriving a PIE representation of the 1D and 2D subsystems.

%Therefore, before we consider testing stability of the system, we first derive an equivalent representation of the system that is free of these auxiliary constraints. 

\subsection{A PIE Representation of 1D PDEs}\label{sec:PIE:1D}

Consider the 1D subsystem of the coupled PDE in~\eqref{eq:PDDE_expanded_N2},
\begin{equation}\label{eq:PDE_1D}
	\mbf{u}_{t}(t)=\text{M}_{A} \smallbmat{\mbf{u}(t)\\\mbf{u}_{x}(t)\\\mbf{u}_{xx}(t)} +\mbf{p}(t),\qquad \mbf{u}(t)\in X_{B},
\end{equation}
where now $\mbf{p}(t)=\text{M}_{A_{\text{d}}}\smallbmat{\mbs{\phi}(t,1)\\\mbs{\phi}_{x}(t,1)\\\mbs{\phi}_{xx}(t,1)}\in L_{2}^{n}[\Omega_{a}^{b}]$ is considered to be an input. In this system, the state $\mbf{u}(t)\in X_{B}\subseteq H_2^{n}[\Omega_{a}^{b}]$ at any time $t\geq 0$ is only second-order differentiable with respect to the spatial variable $x$. As such, the second-order derivative $\mbf{v}(t):=\mbf{u}_{xx}(t)\in L_2^{n}[\Omega_{a}^{b}]$ of the state does not have to satisfy any boundary conditions or continuity constraints, and we refer to $\mbf{v}(t)$ as the \textit{fundamental state} associated to the PDE.
In this subsection, we will derive an equivalent representation of the 1D subsystem in~\eqref{eq:PDE_1D} in terms of this fundamental state $\mbf{v}(t)$, as a PIE. To this end, we first recall the definition of a 3-PI operator.

%To derive a representation of the system in terms of this fundamental state $\mbf{v}(t)$, we first define a map $\mcl{T}:L_{2}^{n}\to X_{B}$ so that $\mbf{u}(t)=\mcl{T}\mbf{v}(t)$. Since the map $\partial_{x}^{2}:\mbf{u}(t)\mapsto\mbf{v}(t)$ is defined by a partial differential operator, this inverse map $\mcl{T}:\mbf{v}(t)\mapsto \mbf{u}(t)$ will be defined by a partial integral operator -- specifically, a 3-PI operator.

\begin{defn}\label{defn:3PI}[3-PI Operators ($\Pi_{3}$)]
	For $m,n\in\N$, define
	\begin{equation*}
		\mcl{N}_{3}^{m\times n}[\Omega_{a}^{b}]\!:=\!L_2^{m\times n}[\Omega_{a}^{b}] \!\times\! L_2^{m\times n}[\Omega_{a}^{b}\!\times\!\Omega_{a}^{b}] \times L_2^{m\times n}[\Omega_{a}^{b}\!\times\!\Omega_{a}^{b}].
	\end{equation*}
	Then, for given parameters $\text{R}:=\{R_0,R_1,R_2\}\in\mcl{N}_{3}$, we define the operator $\mcl{R}=\mcl{P}_{\text{R}}$ for $\mbf{u}\in L_2^{n}[\Omega_{a}^{b}]$ as
	\begin{equation*}\Resize{\linewidth}{
			\bl(\mcl{R}\mbf{u}\br)(x)= R_0(x)\mbf{u}(x) + \!\int_{a}^{x}\!\! R_1(x,\theta)\mbf{u}(\theta)d\theta 	
			+\! \int_{x}^{b}\!\! R_2(x,\theta)\mbf{u}(\theta)d\theta.}
	\end{equation*}
	We say $\mcl{R}\in\Pi_{3}$ if $\mcl{R}:=\mcl{P}_{\text{R}}$ for some $\text{R}\in\mcl{N}_{3}$. For convenience, we say that $\mcl{R}$ is a 3-PI operator.
\end{defn}
Defining 3-PI operators in this manner, it has been shown that $\Pi_{3}$ forms a *-algebra -- i.e. is closed under summation, composition, scalar-multiplication and adjoint with respect to $L_2$~\cite{shivakumar2022GPDE_Arxiv}. Moreover, under mild assumptions on the boundary conditions $B$, we can define a continuous, bijective map $\mcl{T}_{1}:L_2^{n}\to X_{B}$ from the fundamental to the PDE state space as a 3-PI operator, as shown in the following result from~\cite{peet2021PIE_representation}.
\begin{lem}\label{lem:Tmap_1D}
	Let $X_{B}$ be as defined in~\eqref{eq:Xset} for some $B\in\R^{2n\times 4n}$ which is such that the matrix $BH\in\R^{2n\times 2n}$ is invertible with $H$ as in Block~\ref{block:Top_Aop}.
	%satisfies the well-posedness conditions as given in Defn.~9 in~\cite{shivakumar2022GPDE_Arxiv}. 
	If $\mcl{T}_{1}\in\Pi_{3}$ is as defined in Block~\ref{block:Top_Aop}, then, for every $\mbf{u}\in X_{B}$ and $\mbf{v}\in L_2^{n}$,
	\begin{align*}
		\mbf{u}&=\mcl{T}_{1} (\partial_{x}^2 \mbf{u}),	&	&\text{and}	&
		\mbf{v}&=\partial_{x}^2 (\mcl{T}_{1}\mbf{v}).
	\end{align*} 
\end{lem}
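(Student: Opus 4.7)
The plan is to verify the two identities by direct computation, exploiting the fact that the parameters in Block~\ref{block:Top_Aop} are arranged so that $\mcl{T}_1$ essentially performs double-integration plus a correction term that absorbs the boundary conditions.

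The first thing I would observe is that $Q(x,\theta)$ actually does not depend on $x$; it is a function of $\theta$ alone. Splitting the $T_2$ kernel off from the $T_1=(x-\theta)I_n+T_2$ kernel and merging the $\int_a^x$ and $\int_x^b$ pieces of $T_2$ into a single integral over $[a,b]$, one gets the compact form
\begin{equation*}
(\mcl{T}_1\mbf{v})(x)=\int_a^x(x-\theta)\mbf{v}(\theta)\,d\theta-K(x)c[\mbf{v}],\qquad c[\mbf{v}]:=(BH)^{-1}\!\int_a^b\! BQ(\theta)\mbf{v}(\theta)\,d\theta,
\end{equation*}
where $c[\mbf{v}]\in\R^{2n}$ is a constant vector. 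Since $K(x)=[I_n,(x-a)I_n]$ is affine in $x$, differentiating twice immediately gives $\partial_x^2(\mcl{T}_1\mbf{v})(x)=\mbf{v}(x)$, which establishes the right-hand identity modulo boundary conditions.

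Next I would compute the boundary traces. Writing $c[\mbf{v}]=[c_1;c_2]$, a direct evaluation gives
\begin{equation*}
\bmat{(\mcl{T}_1\mbf{v})(a)\\\partial_x(\mcl{T}_1\mbf{v})(a)\\(\mcl{T}_1\mbf{v})(b)\\\partial_x(\mcl{T}_1\mbf{v})(b)}=-H\,c[\mbf{v}]+\int_a^b Q(\theta)\mbf{v}(\theta)\,d\theta,
\end{equation*}
so that applying $B$ yields $-BH(BH)^{-1}\!\int BQ\mbf{v}+\int BQ\mbf{v}=0$. Hence $\mcl{T}_1\mbf{v}\in X_B$ for every $\mbf{v}\in L_2^n$, which completes one direction.

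For the reverse identity, let $\mbf{u}\in X_B$ and set $\mbf{v}=\partial_x^2\mbf{u}$. Both $\mbf{u}$ and $\tilde{\mbf{u}}:=\mcl{T}_1\mbf{v}$ satisfy $\partial_x^2=\mbf{v}$, so $\mbf{w}:=\mbf{u}-\tilde{\mbf{u}}$ is affine: $\mbf{w}(x)=\alpha+(x-a)\beta$ for some $\alpha,\beta\in\R^n$. Both $\mbf{u}$ and $\tilde{\mbf{u}}$ belong to $X_B$, so $\mbf{w}$ does as well, which forces $BH[\alpha;\beta]=0$. Invertibility of $BH$ yields $\alpha=\beta=0$ and hence $\mbf{u}=\mcl{T}_1\mbf{v}$. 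The only real obstacle is the bookkeeping in the boundary computation; once one exploits the $x$-independence of $Q$ and the affine structure of $K$, everything reduces to the single algebraic identity $BH(BH)^{-1}=I$, so the argument is essentially a well-chosen reorganization of the integral.
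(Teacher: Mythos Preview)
Your argument is correct and is precisely the standard direct computation: rewrite $\mcl{T}_1$ as a double integral plus an affine correction $-K(x)c[\mbf{v}]$, differentiate twice to recover $\mbf{v}$, check that the correction forces the boundary vector into $\ker B$, and use invertibility of $BH$ for uniqueness. The paper itself does not give a proof of this lemma at all; it simply cites~\cite{peet2021PIE_representation}, and the argument there is exactly the one you have reconstructed.

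One bookkeeping caveat: as written in Block~\ref{block:Top_Aop}, the matrices $H$ and $Q$ correspond to the boundary ordering $[\mbf{u}(a);\mbf{u}(b);\mbf{u}_x(a);\mbf{u}_x(b)]$, whereas the definition of $X_B$ in~\eqref{eq:Xset} uses $[\mbf{u}(a);\mbf{u}_x(a);\mbf{u}(b);\mbf{u}_x(b)]$. Your displayed boundary vector follows the latter convention, so strictly speaking the identity $\text{(boundary vector)}=-Hc[\mbf{v}]+\int Q\mbf{v}$ only holds after a fixed row permutation. This is an inconsistency in the paper's notation, not a flaw in your reasoning; the algebraic cancellation $B(-Hc+\int Q\mbf{v})=0$ is unaffected since the same permutation is absorbed into $B$.
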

\medskip
\begin{proof}
	Defining the parameters $K,H,Q,T_{j}$ as in Block~\ref{block:Top_Aop}, and using the fundamental theorem of calculus and Cauchy's formula for repeated integration, we can show that
	\begin{align*}
		\mbf{u}&=\text{M}_{K}\smallbmat{\mbf{u}(a)\\\mbf{u}_{x}(a)} +\mcl{P}_{\{0,T_{1}-T_{2},0\}}\mbf{u}_{xx},	\\
		\smallbmat{\mbf{u}(a)\\\mbf{u}(b)\\\mbf{u}_{x}(a)\\\mbf{u}_{x}(b)}&=H\smallbmat{\mbf{u}(a)\\\mbf{u}_{x}(a)} +\mcl{P}_{\{0,Q,Q\}}\mbf{u}_{xx},	& \forall \mbf{u}\in H_{2}^{n}[\Omega_{a}^{b}].
	\end{align*}
	Imposing the boundary conditions $B\smallbmat{\mbf{u}(a)\\\mbf{u}(b)\\\mbf{u}_{x}(a)\\\mbf{u}_{x}(b)}=0$, and performing standard algebraic manipulations, we can then show that for all $\mbf{u}\in X_{B}$,
	\begin{equation*}
		\mbf{u}=\bl[\mcl{P}_{\{0,T_{1}-T_{2},0\}}-\text{M}_{K}(BH)^{-1}B\mcl{P}_{\{0,Q,Q\}}\br]\mbf{u}_{xx}=\mcl{T}_{1}\mbf{u}_{xx}.
	\end{equation*}
	Conversely, using the Leibniz integral rule, we can show that $\partial_{x}^2\circ\mcl{T}_{1}=I$, so that $\mbf{v}=\partial_{x}^2(\mcl{T}_{1}\mbf{v})$ for all $\mbf{v}\in L_2$. A full proof is given in~\cite{peet2021PIE_representation}.
\end{proof}
Lem.~\ref{lem:Tmap_1D} proves that, given sufficiently well-posed boundary conditions, any $\mbf{u}\in X_{B}$ is uniquely defined by its highest-order partial derivative $\mbf{v}=\mbf{u}_{xx}\in L_2^{n}$ as $\mbf{u}=\mcl{T}_{1}\mbf{v}$. Using the Leibniz integral rule, we can then also express
\begin{equation*}\Resize{\linewidth}{
		\text{M}_{A}\smallbmat{\mbf{u}\\\mbf{u}_{x}\\\mbf{u}_{xx}}
		=\text{M}_{A}\smallbmat{\mcl{T}_{1}\mbf{u}_{xx}\\ \partial_{x}\mcl{T}_{1}\mbf{u}_{xx}\\ \mbf{u}_{xx}}
		=\text{M}_{A}\smallbmat{\mcl{P}_{\{0,T_{1},T_{2}\}}\mbf{v}\\\mcl{P}_{\{0,\partial_{x}T_{1},\partial_{x}T_{2}\}}\mbf{v}\\\mcl{P}_{\{I_{n},0,0\}}\mbf{v}}
		=\mcl{A}_{11}\mbf{v},}
\end{equation*}
for $\mcl{A}_{11}\in\Pi_{3}$ as in Block~\ref{block:Top_Aop}. It follows that $\mbf{u}(t)$ satisfies the PDE~\eqref{eq:PDE_1D} if and only if $\mbf{v}(t)=\mbf{u}_{xx}(t)$ satisfies the PIE
\begin{align}\label{eq:PIE_1D}
	\mcl{T}_{1}\mbf{v}_{t}(t)&=\mcl{A}_{11}\mbf{v}(t) +\mbf{p}(t), \qquad \mbf{v}(t)\in L_2^{n}[\Omega_{a}^{b}].
\end{align}
%for suitable $\mcl{A}_{11}\in\Pi_{3}$.
In particular, we have the following result.

\begin{lem}\label{lem:PIE_1D}
	Suppose that $A\in L_{\infty}^{n\times 3n}[\Omega_{a}^{b}]$ and $B\in\R^{2n\times 4n}$ satisfies the conditions of Lem.~\ref{lem:Tmap_1D}. Define operators $\mcl{T}_{1},\mcl{A}_{1}\in\Pi_{3}$ as in  Block~\ref{block:Top_Aop}. Then, for any given input $\mbf{p}(t)\in L_{2}^{n}[\Omega_{a}^{b}]$, $\mbf{v}$ is a solution to the PIE~\eqref{eq:PIE_1D} with initial state $\mbf{v}_{0}\in L_2^{n}[\Omega_{a}^{b}]$ if and only if $\mbf{u}=\mcl{T}_{1}\mbf{v}$ is a solution to the PDE~\eqref{eq:PDE_1D} with initial state $\mbf{u}_{0}=\mcl{T}_{1}\mbf{v}_{0}$. Conversely, 
	%for any given input $\mbf{p}(t)\in L_{2}^{3n}[\Omega_{a}^{b}]$, 
	$\mbf{u}$ is a solution to the PDE~\eqref{eq:PDE_1D} with initial state $\mbf{u}_{0}\in X_{B}$ if and only if $\mbf{v}=\partial_{x}^2\mbf{u}$ is a solution to the PIE~\eqref{eq:PIE_1D} with initial state $\mbf{v}_{0}=\partial_{x}^2\mbf{u}_{0}$.

	%	Then, $v$ is a classical solution to the quadratic
	%	PIE defined by $\{T , A\}$ with initial state $v_0$ if and only if
	%	$u := T v$ is a classical solution to the PDE defined
	%	by {B, c} with initial state u0 := T v0. Conversely, u is
	%	a classical solution to the quadratic PDE defined by {B, c}
	%	with initial state u0 if and only if v := ∂2
	%	su is a classical
	%	solution to the quadratic PIE defined by {T , [A, B]} with
	%	initial state v0 := ∂2
	%	su0.
\end{lem}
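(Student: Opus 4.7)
The plan is to leverage Lemma~\ref{lem:Tmap_1D}, which already establishes the bijective correspondence $\mbf{u}\leftrightarrow\mbf{v}$ between $X_{B}$ and $L_2^{n}$ via $\mbf{u}=\mcl{T}_{1}\mbf{v}$ and $\mbf{v}=\partial_{x}^{2}\mbf{u}$. With this in hand, the lemma reduces to a single algebraic identity: for every $\mbf{v}\in L_2^{n}$,
\begin{equation*}
    A(x)\smallbmat{(\mcl{T}_{1}\mbf{v})(x)\\ \partial_{x}(\mcl{T}_{1}\mbf{v})(x)\\ \partial_{x}^{2}(\mcl{T}_{1}\mbf{v})(x)} \;=\;(\mcl{A}_{11}\mbf{v})(x).
\end{equation*}
Once this identity is established, both implications follow by substitution, since $\mcl{T}_{1}$ is time-independent so $\partial_{t}(\mcl{T}_{1}\mbf{v})=\mcl{T}_{1}\mbf{v}_{t}$, and Fr\'echet differentiability is preserved under $\mcl{T}_{1}$ and $\partial_{x}^{2}$ by the regularity encoded in $X_{B}\subseteq H_{2}^{n}$.

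For the forward direction, I would assume $\mbf{v}$ solves the PIE~\eqref{eq:PIE_1D} with initial state $\mbf{v}_{0}$, set $\mbf{u}:=\mcl{T}_{1}\mbf{v}$, and use Lem.~\ref{lem:Tmap_1D} to conclude $\mbf{u}(t)\in X_{B}$ for every $t$ and $\mbf{u}(0)=\mcl{T}_{1}\mbf{v}_{0}=\mbf{u}_{0}$. Applying $\partial_{t}$ commutes with $\mcl{T}_{1}$, so the PIE gives $\mcl{T}_{1}\mbf{u}_{t}=\mcl{T}_{1}(\mcl{T}_{1}\mbf{v})_{t}$ which, after invoking the identity above, matches the right-hand side of~\eqref{eq:PDE_1D}. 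The converse direction is symmetric: given a PDE solution $\mbf{u}$, set $\mbf{v}:=\partial_{x}^{2}\mbf{u}$, use Lem.~\ref{lem:Tmap_1D} to recover $\mbf{u}=\mcl{T}_{1}\mbf{v}$, and then the PDE becomes $\mcl{T}_{1}\mbf{v}_{t}=\mcl{A}_{11}\mbf{v}+\mbf{p}$ after the identity is applied in the opposite direction.

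The main obstacle, then, is the algebraic verification of the identity, which amounts to differentiating $\mcl{T}_{1}\mbf{v}$ under the integral using Leibniz' rule. Writing $(\mcl{T}_{1}\mbf{v})(x)=\int_{a}^{x}T_{1}(x,\theta)\mbf{v}(\theta)d\theta+\int_{x}^{b}T_{2}(x,\theta)\mbf{v}(\theta)d\theta$, the boundary terms produced by the Leibniz rule carry the factors $T_{1}(x,x)-T_{2}(x,x)$ at first order and $\partial_{x}T_{1}(x,x)-\partial_{x}T_{2}(x,x)$ at second order. From the definition $T_{1}(x,\theta)=(x-\theta)I_{n}+T_{2}(x,\theta)$ given in Block~\ref{block:Top_Aop}, the first of these vanishes and the second equals $I_{n}$, so differentiating twice produces the pointwise contribution $\mbf{v}(x)$ (recovering also the second identity of Lem.~\ref{lem:Tmap_1D}). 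It remains to observe that $\partial_{x}^{2}T_{j}\equiv 0$ because $T_{2}$ is linear in $x$ through $K(x)=[I_{n}\ (x-a)I_{n}]$ (and $T_{1}$ differs from $T_{2}$ by the linear term $(x-\theta)I_{n}$).

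Stacking the resulting expressions for $\mbf{u}$, $\mbf{u}_{x}$, and $\mbf{u}_{xx}=\mbf{v}$ and multiplying by $A(x)$ then produces precisely
\begin{equation*}
    A_{0}(x)\mbf{v}(x)+\int_{a}^{x}A_{1}(x,\theta)\mbf{v}(\theta)d\theta+\int_{x}^{b}A_{2}(x,\theta)\mbf{v}(\theta)d\theta,
\end{equation*}
with $A_{0},A_{1},A_{2}$ as defined in Block~\ref{block:Top_Aop}, which is the definition of $\mcl{P}_{\text{A}}\mbf{v}=\mcl{A}_{11}\mbf{v}$. I expect the bookkeeping of the boundary terms from Leibniz differentiation to be the only subtle point, but the construction of $T_{1},T_{2}$ in the block is precisely arranged to make these terms cancel, so no additional lemma is needed.
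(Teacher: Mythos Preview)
Your proposal is correct and follows essentially the same approach as the paper: substitute $\mbf{u}=\mcl{T}_{1}\mbf{v}$ into the PDE and use Leibniz' integral rule to compute $\smallbmat{I\\\partial_{x}\\\partial_{x}^2}\circ\mcl{T}_{1}$, verifying that the result is $\mcl{A}_{11}$. In fact you supply more detail than the paper, which only sketches this argument and defers the full computation to~\cite{peet2021PIE_representation}; the same Leibniz calculation you outline (with the cancellation $T_{1}(x,x)-T_{2}(x,x)=0$) appears explicitly in the paper's proof of Lem.~\ref{lem:PIE_2D}.
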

\begin{proof}
	The proof follows substituting the relation $\mbf{u}(t)=\mcl{T}_{1}\mbf{v}(t)$ into the PDE~\eqref{eq:PDE_1D}. A full proof is given in e.g.~\cite{peet2021PIE_representation}.
\end{proof}

%In this representation, the 1D state component $\mbf{v}(t)\in L_{2}^{n}$ at each time $t\geq 0$ does not have to satisfy any auxiliary constraints. However, we have yet to account for the auxiliary constraints $\mbs{\phi}(t)\in Y_{\mcl{T}_{1}\mbf{v}(t)}$ imposed upon the 2D state component.

\subsection{A PIE Representation of 2D Transport Equations}\label{sec:PIE:2D}

%Having derived a PIE representation of the 1D subsystem of the PDE~\eqref{eq:PDDE_expanded_N2}, we now consider how we may derive a similar, auxiliary constraint free, representation of the 2D subsystem.

Consider now the 2D subsystem of the coupled PDE in~\eqref{eq:PDDE_expanded_N2},
\begin{align}\label{eq:PDE_2D}
	\mbs{\phi}_{t}(t)&=-(1/\tau)\mbs{\phi}_{s}(t),	&	\mbs{\phi}(t)\in Y_{\mcl{T}_{1}\mbf{v}(t)},	\\
	\mbf{p}(t)&=\text{M}_{A_{d}}\smallbmat{\mbs{\phi}(t,1)\\\mbs{\phi}_{x}(t,1)\\\mbs{\phi}_{xx}(t,1)},	\notag
\end{align}
wherein we consider $\mbf{v}(t)=\mbf{u}_{xx}(t)\in L_{2}^{n}[\Omega_{a}^{b}]$ as an input, and $\mbf{p}(t)\in L_{2}^{n}[\Omega_{a}^{b}]$ as an output. 
Although a framework for constructing PIE representations for general 2D PDEs has been developed in~\cite{jagt2021PIEArxiv}, in this case, we can significantly simplify this construction by exploiting the structure of the 2D subsystem. In particular, by definition of the space $Y_{\mbf{u}(t)}$, any $\mbs{\phi}(t)\in Y_{\mbf{u}(t)}$ must satisfy the same boundary conditions as $\mbf{u}(t)$. As such, we can use the same operator $\mcl{T}_{1}$ as in Lem.~\ref{lem:Tmap_1D} to also express $\mbs{\phi}(t)$ in terms of its associated fundamental state $\mbs{\phi}_{sxx}(t)$, as shown in the following lemma.

%As such, defining $\mcl{T}_{1}\in\Pi_{3}$ as in Lemma~\ref{lem:Tmap_1D}, for any $\mbs{\phi}(t,s)\in X_{B}$, we also have
%\begin{equation}\label{eq:fundamental_expansion_1}
%	\mbs{\phi}(t,s)=\mcl{T}_{1}\mbs{\phi}_{xx}(t,s),\qquad t\geq0, s\in[0,1].
%\end{equation}
%However, the second-order derivative $\mbs{\phi}_{xx}(t)$ of this state is not quite free of auxiliary constraints, as it must also be first-order differentiable with respect to the variable $s$. Instead, the fundamental state associated to $\mbs{\phi}(t)\in H_{(1,2)}^{n}$ will be given by $\mbs{\psi}(t):=\mbs{\phi}_{sxx}(t)$. Invoking~\eqref{eq:fundamental_expansion_1}, we find then that
%\begin{equation*}
%	\partial_{s}\mbs{\phi}(t)=\partial_{s}\mcl{T}_{1}\mbs{\phi}_{xx}(t)=\mcl{T}_{1}\mbs{\phi}_{sxx}(t)=\mcl{T}_{1}\mbs{\psi}(t)
%\end{equation*}
%and so, by the fundamental theorem of calculus,
%\begin{equation*}
%	\mbs{\phi}(t,s)=\mbs{\phi}(t,0)+\!\int_{0}^{s}\!\!\mbs{\phi}_{s}(t,\theta)d\theta
%	%	&=\mbf{u} +\int_{0}^{s}(\mcl{T}_{1}\mbs{\phi}_{sxx})(\theta)d\theta	\\
%	=\mcl{T}_{1}\mbf{v}(t) +\!\int_{0}^{s}\!\!(\mcl{T}_{1}\mbs{\psi})(t,\theta)d\theta,
%\end{equation*}
%where we impose the `sewing condition' $\mbs{\phi}(t,0)=\mbf{u}(t)=\mcl{T}_{1}\mbf{v}(t)$. More precisely, we obtain the following result.

\begin{lem}\label{lem:Tmap_2D}
	Let $Y_{\mbf{u}}$ be as defined in~\eqref{eq:Yset_PDE}, with the set $X_{B}$ as defined in~\eqref{eq:Xset} for some $B\in\R^{2n\times 4n}$ satisfying the conditions of Lem.~\ref{lem:Tmap_1D}.
	%satisfies the well-posedness conditions as given in Defn.~9 in~\cite{shivakumar2022GPDE_Arxiv}. 
	If $\mcl{T}_{1}\in\Pi_{3}$ and $\mcl{T}_{2}$ are as defined in Block~\ref{block:Top_Aop} and $\mbf{u}\in X_{B}$, then, for every $\mbs{\phi}\in Y_{\mbf{u}}$ and every $\mbs{\psi}\in L_2^{n}[\Omega_{0a}^{1b}]$,
	\begin{align*}
		\mbs{\phi}&=\mbf{u} +\mcl{T}_{2}(\partial_{s}\partial_{x}^2\mbs{\phi}),	&	&\text{and}	&
		\mbs{\psi}&=\partial_{s}\partial_{x}^2 (\mbf{u}+\mcl{T}_{2}\mbs{\psi}).
	\end{align*} 
\end{lem}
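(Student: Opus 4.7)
The plan is to reduce both identities to slice-wise applications of Lemma~\ref{lem:Tmap_1D} in the variable $x$, exploiting the fact that $\mcl{T}_{2}$ is built from $\mcl{T}_{1}$ by integration in $s$, and that every $s$-slice of $\mbs{\phi}\in Y_{\mbf{u}}$ lies in $X_{B}$. The second identity will follow from a direct computation, while the first requires a small preparatory step: showing that $\partial_{s}\mbs{\phi}(s,\cdot)$ itself lies in $X_{B}$ for almost every $s$.

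For the first identity, I would fix $\mbs{\phi}\in Y_{\mbf{u}}$ and first argue that $\partial_{s}\mbs{\phi}(s,\cdot)\in X_{B}$ for a.e.\ $s$. Since $X_{B}$ is the kernel of the linear functional sending $\mbf{v}\in H_{2}^{n}[\Omega_{a}^{b}]$ to $B\,\mathrm{col}(\mbf{v}(a),\mbf{v}_{x}(a),\mbf{v}(b),\mbf{v}_{x}(b))$, and since the regularity $\mbs{\phi}\in H_{(1,2)}^{n}[\Omega_{0a}^{1b}]$ guarantees that the four boundary traces $\mbs{\phi}(s,a),\mbs{\phi}_{x}(s,a),\mbs{\phi}(s,b),\mbs{\phi}_{x}(s,b)$ are absolutely continuous in $s$ with $L_{2}$-derivative, differentiating the pointwise-in-$s$ identity $B[\text{traces of }\mbs{\phi}]=0$ shows that the corresponding traces of $\partial_{s}\mbs{\phi}$ also annihilate $B$. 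Applying the first part of Lem.~\ref{lem:Tmap_1D} to the slice $\partial_{s}\mbs{\phi}(s,\cdot)\in X_{B}$ yields $\partial_{s}\mbs{\phi}(s,\cdot)=\mcl{T}_{1}(\partial_{x}^{2}\partial_{s}\mbs{\phi}(s,\cdot))$. Integrating in $s$ from $0$ to $s$, using $\mbs{\phi}(0,\cdot)=\mbf{u}$ and the definition of $\mcl{T}_{2}$ in Block~\ref{block:Top_Aop}, then gives
$$\mbs{\phi}(s,\cdot)=\mbf{u}+\int_{0}^{s}\mcl{T}_{1}\bigl(\partial_{x}^{2}\partial_{\theta}\mbs{\phi}(\theta,\cdot)\bigr)\,d\theta=\mbf{u}+(\mcl{T}_{2}\,\partial_{s}\partial_{x}^{2}\mbs{\phi})(s),$$
which is the required formula.

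For the second identity, I would fix $\mbs{\psi}\in L_{2}^{n}[\Omega_{0a}^{1b}]$ and compute directly. Since $\mbf{u}$ does not depend on $s$, the fundamental theorem of calculus applied to $(\mcl{T}_{2}\mbs{\psi})(s)=\int_{0}^{s}\mcl{T}_{1}\mbs{\psi}(\theta,\cdot)\,d\theta$ gives $\partial_{s}(\mbf{u}+\mcl{T}_{2}\mbs{\psi})(s)=\mcl{T}_{1}\mbs{\psi}(s,\cdot)$; applying $\partial_{x}^{2}$ and invoking the second part of Lem.~\ref{lem:Tmap_1D} on the a.e.-defined slice $\mbs{\psi}(s,\cdot)\in L_{2}^{n}[\Omega_{a}^{b}]$ returns $\partial_{x}^{2}\mcl{T}_{1}\mbs{\psi}(s,\cdot)=\mbs{\psi}(s,\cdot)$, as claimed. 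I expect the main obstacle to be the regularity step in the first identity: commuting $\partial_{s}$ with the boundary evaluations so that the homogeneous BCs are inherited by $\partial_{s}\mbs{\phi}$. Everything else is bookkeeping involving Fubini and the linearity of $\mcl{T}_{1}$, which let the $s$-integral pass through $\mcl{T}_{1}$ to assemble $\mcl{T}_{2}$ in the form prescribed by Block~\ref{block:Top_Aop}.
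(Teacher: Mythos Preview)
Your argument is correct, and for the second identity it matches the paper exactly. For the first identity your route differs slightly from the paper's and is a little more laborious than necessary. The paper does not bother to show that $\partial_{s}\mbs{\phi}(s,\cdot)\in X_{B}$; instead it applies Lem.~\ref{lem:Tmap_1D} \emph{before} differentiating in $s$. Since $\mbs{\phi}(s,\cdot)\in X_{B}$ by definition of $Y_{\mbf{u}}$, Lem.~\ref{lem:Tmap_1D} gives directly $\mbs{\phi}(s)=\mcl{T}_{1}(\partial_{x}^{2}\mbs{\phi}(s))$ for every $s$; differentiating this identity in $s$ and using that $\mcl{T}_{1}$ acts only in $x$ (hence commutes with $\partial_{s}$) yields $\partial_{s}\mbs{\phi}(s)=\mcl{T}_{1}(\partial_{s}\partial_{x}^{2}\mbs{\phi}(s))$, after which the integration step is the same as yours. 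This sidesteps precisely the obstacle you flagged --- commuting $\partial_{s}$ with the boundary evaluations --- so the ``preparatory step'' you describe is unnecessary. Your approach still works, but it trades a trivial commutation of $\partial_{s}$ with the $x$-integral operator $\mcl{T}_{1}$ for a trace-regularity argument.
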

\smallskip
\begin{proof}
	Fix arbitrary $\mbf{u}\in X_{B}$ and $\mbs{\phi}\in Y_{\mbf{u}}$. By definition of the set $\mbs{\phi}\in Y_{\mbf{u}}$, we have $\mbs{\phi}(0)=\mbf{u}$ and $\mbs{\phi}(s)\in X_{B}$ for all $s\in[0,1]$. By Lemma~\ref{lem:Tmap_1D}, then, $\mbs{\phi}(s)=\mcl{T}_{1}(\partial_{x}^2 \mbs{\phi}(s))$ for all $s\in[0,1]$, implying that also
	\begin{equation*}
		\partial_{s}\mbs{\phi}(s)=\partial_{s}\mcl{T}_{1}(\partial_{x}^2 \mbs{\phi}(s))=\mcl{T}_{1}(\partial_{s}\partial_{x}^2\mbs{\phi}(s)).
	\end{equation*}
	Invoking the fundamental theorem of calculus, and using the definition of the operator $\mcl{T}_{2}$, it follows that
	\begin{align*}
		\mbs{\phi}(s)&=\mbs{\phi}(0)+\!\int_{0}^{s}\!\!\partial_{s}\mbs{\phi}(\theta)d\theta	\\
		&=\mbf{u} +\!\int_{0}^{s}\!\!\mcl{T}_{1}(\partial_{s}\partial_{x}^2\mbs{\phi}(\theta))d\theta	
		=\mbf{u} +\bl(\mcl{T}_{2}(\partial_{s}\partial_{x}^{2}\mbs{\phi})\br)(s).
	\end{align*}
	Now, fix arbitrary $\mbs{\psi}\in L_2^{n}[\Omega_{0a}^{1b}]$. Then, for all $s\in[0,1]$,
	\begin{equation*}\Resize{\linewidth}{
			\partial_{s}\partial_{x}^2\bl(\mbf{u}+(\mcl{T}_{2}\mbs{\psi})(s)\br)
			=\partial_{x}^{2}\partial_{s}\bbbl(\int_{0}^{s}\!\!\mcl{T}_{1}\bl(\mbs{\psi}(\theta)\br)d\theta\!\bbbr)
			=\partial_{x}^{2}\mcl{T}_{1}(\mbs{\psi}(s)).}
	\end{equation*}
	Here, by Lem.~\ref{lem:Tmap_1D}, $\partial_{x}^{2}\mcl{T}_{1}(\mbs{\psi}(s))=\mbs{\psi}(s)$ for all $s\in[0,1]$. 
	%Hence, $\partial_{s}\partial_{x}^2 (\mbf{u} +\mcl{T}_{2}\mbs{\psi})=\mbs{\psi}$.
\end{proof}

Using the relation $\mbs{\phi}(t)=\mcl{T}_{1}\mbf{v}(t)+\mcl{T}_{2}\mbs{\psi}(t)$ with $\mbs{\psi}(t)=\mbs{\phi}_{sxx}(t)$, and defining operators $\{\mcl{A}_{22},\mcl{A}_{11,d},\mcl{A}_{12}\}$ as in Block~\ref{block:Top_Aop}, we can show that if $(\mbs{\phi},\mbf{v},\mbf{p})$ satisfies the 2D transport PDE~\eqref{eq:PDE_2D}, then $(\mbs{\psi},\mbf{v},\mbf{p})$ satisfies the PIE
%Using this Lemma, it is easy to see that $\mbs{\phi}(t)\in Y_{\mcl{T}_{1}\mbf{v}(t)}$ satisfies the transport equation in~\eqref{eq:PDE_2D} if and only if the associated fundamental state component $\mbs{\psi}(t)=\mbs{\phi}_{sxx}(t)$ satisfies the PIE
\begin{align}\label{eq:PIE_2D}
	&\mcl{T}_{1}\mbf{v}_{t}(t) +\mcl{T}_{2}\mbs{\psi}_{t}(t)=\mcl{A}_{22}\mbs{\psi}(t),	&	\mbs{\psi}(t)&\in L_{2}^{n}[\Omega_{0a}^{1b}],	\\
	&\mbf{p}(t)=\mcl{A}_{11,d}\mbf{v}(t) +\mcl{A}_{12}\mbs{\psi}(t).	\notag
\end{align}
%with operators $\mcl{T}_{1}$ through $\mcl{A}_{12}$ defined as in Block~\ref{block:Top_Aop}.
In particular, we have the following result.

\begin{lem}\label{lem:PIE_2D}
	Suppose that $A_{d}\in L_{\infty}^{n\times 3n}[\Omega_{a}^{b}]$ and $\tau>0$, and that $B\in\R^{2n\times 4n}$ satisfies the conditions of Lem.~\ref{lem:Tmap_1D}. Define PI operators $\{\mcl{T}_{1},\mcl{T}_{2},\mcl{A}_{22},\mcl{A}_{11,d},\mcl{A}_{12}\}$ as in  Block~\ref{block:Top_Aop}. Then, for any given input $\mbf{v}(t)\in L_{2}[\Omega_{a}^{b}]$, $(\mbs{\psi},\mbf{p})$ solves the PIE~\eqref{eq:PIE_2D} with initial state $\mbs{\psi}_{0}\in L_2^{n}[\Omega_{0a}^{1b}]$ if and only if $\mbs{\phi}=\mcl{T}_{1}\mbf{v}+\mcl{T}_{2}\mbs{\psi}$ solves the PDE~\eqref{eq:PDE_2D} with initial state $\mbs{\phi}_{0}=\mcl{T}_{1}\mbf{v}(0)+\mcl{T}_{2}\mbs{\psi}_{0}$. Conversely, 
	%for any given input $\mbf{v}(t)\in L_{2}^{n}[\Omega_{a}^{b}]$, 
	$(\mbs{\phi},\mbf{p})$ solves the PDE~\eqref{eq:PDE_2D} with initial state $\mbs{\phi}_{0}\in Y_{\mcl{T}_{1}\mbf{v}(0)}$ if and only if $\mbs{\psi}=\partial_{s}\partial_{x}^2\mbs{\phi}$ solves the PIE~\eqref{eq:PIE_2D} with initial state $\mbs{\psi}_{0}=\partial_{s}\partial_{x}^2\mbs{\phi}_{0}$. 
\end{lem}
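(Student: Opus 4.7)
The plan is to leverage Lemma~\ref{lem:Tmap_2D} to change variables between the PDE state $\mbs{\phi}$ and the fundamental state $\mbs{\psi}=\partial_{s}\partial_{x}^{2}\mbs{\phi}$, combined with Lemma~\ref{lem:Tmap_1D} to treat the boundary tie $\mbs{\phi}(t,0)=\mbf{u}(t)=\mcl{T}_{1}\mbf{v}(t)$. Together, those two lemmas give a bijection $(\mbf{v},\mbs{\psi})\mapsto(\mcl{T}_{1}\mbf{v},\,\mcl{T}_{1}\mbf{v}+\mcl{T}_{2}\mbs{\psi})$ between $L_{2}^{n}[\Omega_{a}^{b}]\times L_{2}^{n}[\Omega_{0a}^{1b}]$ and $\{(\mbf{u},\mbs{\phi}):\mbf{u}\in X_{B},\,\mbs{\phi}\in Y_{\mbf{u}}\}$ which is compatible with the initial data mapping in the lemma statement. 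Once this bijection is in place, it suffices to verify that the transport PDE holds in one set of coordinates if and only if the PIE holds in the other, and I will do this by direct substitution.

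For the dynamics, I would substitute $\mbs{\phi}(t)=\mcl{T}_{1}\mbf{v}(t)+\mcl{T}_{2}\mbs{\psi}(t)$ into $\mbs{\phi}_{t}=-(1/\tau)\mbs{\phi}_{s}$. Time differentiation is straightforward since $\mcl{T}_{1}$ and $\mcl{T}_{2}$ are time-independent linear operators, giving $\mbs{\phi}_{t}=\mcl{T}_{1}\mbf{v}_{t}+\mcl{T}_{2}\mbs{\psi}_{t}$. For the $s$-derivative, $\mcl{T}_{1}\mbf{v}$ is independent of $s$ and $(\mcl{T}_{2}\mbs{\psi})(s)=\int_{0}^{s}\mcl{T}_{1}\mbs{\psi}(\theta)\,d\theta$, so the fundamental theorem of calculus yields $\mbs{\phi}_{s}(t,s)=\mcl{T}_{1}\mbs{\psi}(t,s)$. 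Combining, $-(1/\tau)\mbs{\phi}_{s}=\mcl{A}_{22}\mbs{\psi}$ by the definition of $\mcl{A}_{22}$, which reproduces the dynamics of~\eqref{eq:PIE_2D}.

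For the output, I would evaluate $\smallbmat{\mbs{\phi}(t,1)\\\mbs{\phi}_{x}(t,1)\\\mbs{\phi}_{xx}(t,1)}$ by using $(\mcl{T}_{2}\mbs{\psi})(1)=\int_{0}^{1}\mcl{T}_{1}\mbs{\psi}(\theta)\,d\theta$ and interchanging $\partial_{x}^{k}$ with the $s$-integral. The identity $\partial_{x}^{2}\mcl{T}_{1}=I$ from Lemma~\ref{lem:Tmap_1D} then splits $\mbf{p}(t)$ cleanly into a $\mbf{v}$-part equal to $A_{d}\smallbmat{\mcl{T}_{1}\mbf{v}\\\partial_{x}\mcl{T}_{1}\mbf{v}\\\mbf{v}}$ and a $\mbs{\psi}$-part equal to $\int_{0}^{1}A_{d}\smallbmat{\mcl{T}_{1}\mbs{\psi}(s)\\\partial_{x}\mcl{T}_{1}\mbs{\psi}(s)\\\mbs{\psi}(s)}ds$. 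Matching these against the parameters $\text{A}_{d}=\{A_{d,0},A_{d,1},A_{d,2}\}$ in Block~\ref{block:Top_Aop} identifies the two pieces as $\mcl{A}_{11,d}\mbf{v}$ and $\mcl{A}_{12}\mbs{\psi}$, completing the forward implication. The converse runs the same substitution backwards through the bijection, and the compatibility with initial data is immediate from $\mbs{\phi}_{0}=\mcl{T}_{1}\mbf{v}(0)+\mcl{T}_{2}\mbs{\psi}_{0}$.

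The main technical obstacle I anticipate is rigorously justifying the interchange of $\partial_{s}$ and $\partial_{x}^{2}$ with the 3-PI operators $\mcl{T}_{1}$ and $\mcl{T}_{2}$, and the resulting boundary-term cancellations when applying Leibniz' rule. These rest on the regularity $\mbs{\phi}\in Y_{\mbf{u}}\subset H_{(1,2)}^{n}$, on the Frech\'et differentiability of solutions, and on the algebraic identity $T_{1}(x,x)=T_{2}(x,x)$ built into the parameter definitions in Block~\ref{block:Top_Aop}. Most of this bookkeeping has already been handled in~\cite{peet2021PIE_representation} for the 1D case via Lemma~\ref{lem:PIE_1D}, so the $s$-slice-wise application of those results combined with the one new identity $\partial_{s}\mcl{T}_{2}\mbs{\psi}=\mcl{T}_{1}\mbs{\psi}$ should close the argument without any genuinely new analytical difficulties.
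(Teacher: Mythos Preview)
Your proposal is correct and follows essentially the same route as the paper: substitute $\mbs{\phi}=\mcl{T}_{1}\mbf{v}+\mcl{T}_{2}\mbs{\psi}$, use the fundamental theorem of calculus to get $\partial_{s}\mcl{T}_{2}\mbs{\psi}=\mcl{T}_{1}\mbs{\psi}$ for the dynamics, apply Leibniz' rule (with the cancellation $T_{1}(x,x)=T_{2}(x,x)$) together with $\partial_{x}^{2}\mcl{T}_{1}=I$ for the output identity, and invoke Lemma~\ref{lem:Tmap_2D} for the converse. The paper carries out exactly these computations explicitly.
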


\begin{proof}
	Fix arbitrary $\mbs{\psi}_{0}\in L_{2}^{n}[\Omega_{0a}^{1b}]$ and $\mbf{v}(t)\in L_2^{n}[\Omega_{a}^{b}]$ for $t\geq 0$. Let $\mbs{\psi}(t)\in L_2[\Omega_{0a}^{1b}]$ and define $\mbs{\phi}(t)=\mcl{T}_{1}\mbf{v}(t)+\mcl{T}_{2}\mbs{\psi}(t)$. By Lemma~\ref{lem:Tmap_2D}, $\mbs{\phi}(t)\in Y_{\mcl{T}_{1}\mbf{v}(t)}$. In addition, it is clear that $\mbs{\psi}(0)=\mbs{\psi}_{0}$ if and only if $\mbs{\phi}(0)=\mcl{T}_{1}\mbf{v}(0)+\mcl{T}_{2}\mbs{\psi}_{0}$. Moreover, since $\mbf{v}(t)$ does not vary in $s\in[0,1]$,
	\begin{equation*}\Resize{\linewidth}{
			\mbs{\phi}_{s}(t)=\partial_{s}\mcl{T}_{2}\mbs{\psi}(t)
			=\partial_{s}\!\int_{0}^{s}\!\!\bl(\mcl{T}_{1}\mbs{\psi}\br)(t,\theta)d\theta =\mcl{T}_{1}\mbs{\psi}(t)=-\tau\mcl{A}_{22}\mbs{\psi}(t).}
	\end{equation*}
	where we invoke the definition of the operator $\mcl{A}_{22}=-\frac{1}{\tau}\mcl{T}_{1}$.
	It follows that
	\begin{equation}\label{eq:proof:lem:PIE_2D_1}
		\mcl{T}_{1}\mbf{v}_{t}(t)+\mcl{T}_{2}\mbs{\psi}_{t}(t)-\mcl{A}_{22}\mbs{\psi}(t)
		%&=\mbs{\phi}_{t}(t)+(1/\tau)(\mcl{T}_{1}\mbs{\psi}(t))	\\
		=\mbs{\phi}_{t}(t)+(1/\tau)\mbs{\phi}_{s}(t).
	\end{equation}
	Furthermore, using the Leibniz integral rule, we find that for any $\mbf{w}\in L_2^{n}[\Omega_{a}^{b}]$,
	{\small
		\begin{align*}
			\partial_{x}\bl(\mcl{T}_{1}\mbf{w}\br)
			&=\partial_{x}\int_{a}^{x}\!T_{1}(x,\theta)\mbf{w}(\theta)d\theta +\partial_{x}\int_{x}^{b}\!T_{2}(x,\theta)\mbf{w}(\theta)d\theta	\\
			&=T_{1}(x,x)\mbf{w}(x) +\int_{a}^{x}\!\partial_{x}T_{1}(x,\theta)\mbf{w}(\theta)d\theta	\\
			&\qquad-T_{2}(x,x)\mbf{w}(x) +\int_{x}^{b}\!\partial_{x}T_{2}(x,\theta)\mbf{w}(\theta)d\theta	\\
			&=\int_{a}^{x}\!\partial_{x}T_{1}(x,\theta)\mbf{w}(\theta)d\theta +\int_{x}^{b}\!\partial_{x}T_{2}(x,\theta)\mbf{w}(\theta)d\theta.
	\end{align*}}
	Noting that $I\circ\mcl{T}_{1}=\mcl{T}_{1}$ and $\partial_{x}^2\circ\mcl{T}_{1}=I$, and invoking  the definition of the operators $\mcl{A}_{11,d}$ and $\mcl{A}_{12}$, it follows that
	\begin{align}\label{eq:proof:lem:PIE_2D_2}
		&\bl(\mcl{A}_{11,d}\mbf{v}\br)(t,x) +\bl(\mcl{A}_{12}\mbs{\psi}\br)(t,x)	\\
		&=A_{d}(x)\left(\smallbmat{I\\\partial_{x}\\\partial_{x}^2}\bl(\mcl{T}_{1}\mbf{v}\br)(t,x) +\!\int_{0}^{1}\!\smallbmat{I\\\partial_{x}\\\partial_{x}^2}\!\bl(\mcl{T}_{1}\mbs{\psi}\br)(t,s,x)ds\right)	\notag\\
		&\hspace*{4.5cm}=A_{d}(x)\smallbmat{I\\\partial_{x}\\\partial_{x}^2}\mbs{\phi}(t,1,x).	\notag
	\end{align}
	By~\eqref{eq:proof:lem:PIE_2D_1} and~\eqref{eq:proof:lem:PIE_2D_2}, we conclude that $(\mbs{\psi}(t),\mbf{p}(t))$ satisfies the PIE~\eqref{eq:PIE_2D} if and only if $(\mbs{\phi}(t),\mbf{p}(t))$ satisfies the PDE~\eqref{eq:PDE_2D}.
	
	For the converse result, let $\mbs{\phi}_{0}\in Y_{\mcl{T}_{1}\mbf{v}(0)}$ and $\mbs{\phi}(t)\in Y_{\mcl{T}_{1}\mbf{v}(t)}$, and define $\mbs{\psi}_{0}=\partial_{s}\partial_{x}^2 \mbs{\phi}_{0}$ and $\mbs{\psi}(t)=\partial_{s}\partial_{x}^2 \mbs{\phi}(t)$. By Lem.~\ref{lem:Tmap_2D}, $\mbs{\phi}(t)=\mcl{T}_{1}\mbf{v}(t)+\mcl{T}_{2}\mbs{\psi}(t)$ and $\mbs{\phi}_{0}=\mcl{T}_{1}\mbf{v}(0)+\mcl{T}_{2}\mbs{\psi}_{0}$. By the first implication, it follows that $(\mbs{\phi},\mbf{p})$ is a solution to the PDE with initial state $\mbs{\phi}_{0}$ if and only if $(\mbs{\psi},\mbf{p})$ is a solution to the PIE with initial state $\mbs{\psi}_{0}$. 
\end{proof}

\section{Feedback Interconnection of PIEs}\label{sec:PIE_interconnection}

Having constructed a PIE representation of both the 1D and 2D subsystems of the PDE~\eqref{eq:PDDE_expanded_N2}, we now take the feedback interconnection of these PIE subsystems to obtain a PIE representation for the full delayed PDE. Specifically, we first prove that such a feedback interconnection of PIEs can itself be represented as PIE as well. To this end, consider a standardized PIE with input $\mbf{p}$ and output $\mbf{q}$ of the form
\begin{align}\label{eq:standard_PIE}
	\mcl{T}_{p}\mbf{p}_{t}(t)+\mcl{T}\mbf{v}_{t}(t)&=\mcl{A}\mbf{v}(t)+\mcl{B}\mbf{p}(t), & \mbf{v}(t)&\in \text{Z}^{\enn{v}}[\Omega],	\notag\\
	\mbf{q}(t)&=\mcl{C}\mbf{v}(t) +\mcl{D}\mbf{p}(t),		
\end{align}
where $\mcl{T}_{p}$ through $\mcl{D}$ are all PI operators, and where we define
\begin{equation}\label{eq:Zspace}
	\text{Z}^{\enn{}}[\Omega]:=\R^{n_{0}}\times L_2^{n_{1}}[\Omega_{1}]\times L_2^{n_{2}}[\Omega_{2}]\times L_2^{n_{3}}[\Omega_{1}\times\Omega_{2}]
\end{equation}
for $\Omega=\Omega_{1}\times\Omega_{2}\subseteq\R^{2}$ and $\enn{}\in\N^{4}$, so that we may use this format to express (coupled) 1D and 2D PDEs. We will allow the inputs $\mbf{p}(t)\in \text{Z}^{\enn{p}}$ and outputs $\mbf{q}(t)\in \text{Z}^{\enn{q}}$ to be distributed as well. We may collect the PI operators defining the system as $\mbf{G}:=\{\mcl{T},\mcl{T}_{p},\mcl{A},\mcl{B},\mcl{C},\mcl{D}\}$, writing $\mbf{G}\in\mbf{\Pi}^{(\enn{v},\enn{q})\times (\enn{v},\enn{p})}$. If $\enn{p}=\enn{q}=0$, we write $\mbf{G}:=\{\mcl{T},\mcl{A}\}\in\mbf{\Pi}^{\enn{v}\times \enn{v}}$.

\begin{defn}[Solution to the PIE]
	For a given input signal $\mbf{p}$ and initial state $\mbf{v}_{0}\in \text{Z}^{\enn{v}}$, we say that $(\mbf{v},\mbf{q})$ is a solution to the PIE defined by $\mbf{G}:=\{\mcl{T},\mcl{T}_{p},\mcl{A},\mcl{B},\mcl{C},\mcl{D}\}$ if $\mbf{v}$ is Frech\'et differentiable, $\mbf{v}(0)=\mbf{v}_{0}$, and for all $t\geq0$, $(\mbf{v}(t),\mbf{q}(t),\mbf{p}(t))$ satisfies Eqn.~\eqref{eq:standard_PIE}.
\end{defn}

By the composition and addition rules of PI operators, it follows that the feedback interconnection of two suitable PIEs can be represented as a PIE as well.

%Since PI operators form a closed set under addition and composition, we can show that 
%The following result proves that the feedback interconnection of two suitable PIEs can be represented as a PIE as well.

\begin{prop}[Interconnection of PIEs]\label{prop:PIE_interconnection}
	Let
	\begin{align*}
		\mbf{G}_{1}&:=\{\mcl{T}_{1},\mcl{T}_{p},\mcl{A}_{1},\mcl{B}_{p},\mcl{C}_{q},\mcl{D}_{qp}\}\in\mbf{\Pi}^{(\enn{1},\enn{q})\times (\enn{1},\enn{p})},	\\
		\mbf{G}_{2}&:=\{\mcl{T}_{2},\mcl{T}_{q},\mcl{A}_{2},\mcl{B}_{q},\mcl{C}_{p},0\}\in\mbf{\Pi}^{(\enn{2},\enn{p})\times (\enn{2},\enn{q})}
	\end{align*}
	and define $\mbf{G}:=\{\mcl{T},\mcl{A}\}\in \mbf{\Pi}^{\enn{v}\times \enn{v}}$ with $\enn{v}=\enn{1}+\enn{2}$ as
	\begin{equation*}\Resize{\linewidth}{
			\mcl{T}\!:=\!\bmat{\mcl{T}_{1} &\! \mcl{T}_{p}\mcl{C}_{p}\\ \mcl{T}_{q}\mcl{C}_{q} &\! \mcl{T}_{2}\!+\!\mcl{T}_{q}\mcl{D}_{qp}\mcl{C}_{p}}\!,	\quad
			\mcl{A}\!:=\!\bmat{\mcl{A}_{1} &\! \mcl{B}_{p}\mcl{C}_{p}\\ \mcl{B}_{q}\mcl{C}_{q} &\! \mcl{A}_{2}\!+\!\mcl{B}_{q}\mcl{D}_{qp}\mcl{C}_{p}}\!.}
	\end{equation*}
	Then, $\smallbmat{\mbf{v}\\\mbs{\psi}}$ solves the PIE defined by $\mbf{G}$ with initial values $\smallbmat{\mbf{v}_{0}\\\mbs{\psi}_{0}}$ if and only if $(\mbf{v},\mbf{q})$ and $(\mbs{\psi},\mbf{p})$ solve the PIEs defined by $\mbf{G}_{1}$ and $\mbf{G}_{2}$ with initial values $\mbf{v}_{0}$ and $\mbs{\psi}_{0}$ and inputs $\mbf{p}$ and $\mbf{q}$, respectively, where
	\begin{equation}\label{eq:pq_signals}
		\mbf{p}(t)=\mcl{C}_{p}\mbs{\psi}(t), \qquad	\mbf{q}(t)=\mcl{C}_{q}\mbf{v}(t) + \mcl{D}_{qp}\mbf{p}(t).
	\end{equation}
\end{prop}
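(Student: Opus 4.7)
The proposition is essentially a statement that substituting the algebraic interconnection equations~\eqref{eq:pq_signals} into the two component PIEs and stacking the results yields precisely the combined PIE defined by $\mbf{G}$. I would therefore structure the proof as a direct substitution argument, supported by the closure properties of PI operators and the linearity of Frech\'et differentiation. The plan is to prove the two implications separately, with each direction reducing to a line-by-line verification that the stacked dynamics coincide with those of $\mbf{G}$.

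For the forward direction, I would assume $(\mbf{v},\mbf{q})$ solves the PIE for $\mbf{G}_1$ with input $\mbf{p}$ and $(\mbs{\psi},\mbf{p})$ solves the PIE for $\mbf{G}_2$ with input $\mbf{q}$, where $\mbf{p}$ and $\mbf{q}$ are given by~\eqref{eq:pq_signals}. First I would substitute $\mbf{p}=\mcl{C}_{p}\mbs{\psi}$ into the dynamic equation of $\mbf{G}_1$, using that $\mcl{C}_{p}$ is a bounded PI operator (so $\partial_{t}(\mcl{C}_{p}\mbs{\psi})=\mcl{C}_{p}\mbs{\psi}_{t}$), to obtain
\begin{equation*}
	\mcl{T}_{1}\mbf{v}_{t}+\mcl{T}_{p}\mcl{C}_{p}\mbs{\psi}_{t}=\mcl{A}_{1}\mbf{v}+\mcl{B}_{p}\mcl{C}_{p}\mbs{\psi}.
\end{equation*}
Next I would substitute $\mbf{q}=\mcl{C}_{q}\mbf{v}+\mcl{D}_{qp}\mcl{C}_{p}\mbs{\psi}$ into the dynamics of $\mbf{G}_2$ and collect terms to obtain
\begin{equation*}
	\mcl{T}_{q}\mcl{C}_{q}\mbf{v}_{t}+(\mcl{T}_{2}+\mcl{T}_{q}\mcl{D}_{qp}\mcl{C}_{p})\mbs{\psi}_{t}=\mcl{B}_{q}\mcl{C}_{q}\mbf{v}+(\mcl{A}_{2}+\mcl{B}_{q}\mcl{D}_{qp}\mcl{C}_{p})\mbs{\psi}.
\end{equation*}
Stacking these two equations yields exactly $\mcl{T}\smallbmat{\mbf{v}_{t}\\\mbs{\psi}_{t}}=\mcl{A}\smallbmat{\mbf{v}\\\mbs{\psi}}$ with the matrices defined in the statement, and the initial condition $\smallbmat{\mbf{v}(0)\\\mbs{\psi}(0)}=\smallbmat{\mbf{v}_{0}\\\mbs{\psi}_{0}}$ is immediate. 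Frech\'et differentiability of $\smallbmat{\mbf{v}\\\mbs{\psi}}$ follows from that of each component.

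For the converse, I would assume $\smallbmat{\mbf{v}\\\mbs{\psi}}$ solves the combined PIE, \emph{define} $\mbf{p}$ and $\mbf{q}$ by~\eqref{eq:pq_signals}, and read off the top block row of $\mcl{T}\smallbmat{\mbf{v}_{t}\\\mbs{\psi}_{t}}=\mcl{A}\smallbmat{\mbf{v}\\\mbs{\psi}}$ as the dynamics of $\mbf{G}_1$ with input $\mbf{p}$ (after rewriting $\mcl{T}_{p}\mcl{C}_{p}\mbs{\psi}_{t}$ as $\mcl{T}_{p}\mbf{p}_{t}$ and $\mcl{B}_{p}\mcl{C}_{p}\mbs{\psi}$ as $\mcl{B}_{p}\mbf{p}$), together with the output equation for $\mbf{q}$ being the definition itself. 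The bottom block row similarly reproduces the dynamics of $\mbf{G}_2$ with input $\mbf{q}$, where the terms involving $\mcl{T}_{q}\mcl{D}_{qp}\mcl{C}_{p}\mbs{\psi}_{t}$ and $\mcl{T}_{q}\mcl{C}_{q}\mbf{v}_{t}$ recombine into $\mcl{T}_{q}\mbf{q}_{t}$ using $\mbf{q}_{t}=\mcl{C}_{q}\mbf{v}_{t}+\mcl{D}_{qp}\mcl{C}_{p}\mbs{\psi}_{t}$, and analogously for the right-hand side. Frech\'et differentiability of $\mbf{p}$ and $\mbf{q}$ follows from that of $\mbf{v}$, $\mbs{\psi}$ and boundedness of $\mcl{C}_{p},\mcl{C}_{q},\mcl{D}_{qp}$.

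The calculations are entirely routine; the only genuine subtlety is ensuring the block operators $\mcl{T}$ and $\mcl{A}$ really are PI operators, but this follows from the fact that $\Pi_{3}$ (and more generally the PI algebras on $\text{Z}^{\enn{}}$) are closed under composition and addition, as invoked in~\cite{peet2021PIE_representation,jagt2021PIEArxiv} and already used earlier in the paper. The main obstacle, if any, is purely notational bookkeeping: keeping the block structure of $\mcl{T}$, $\mcl{A}$ aligned with the correct appearance of $\mbf{p}_{t}$, $\mbf{q}_{t}$ after differentiating the interconnection equations, and ensuring that in the converse direction the substitution is consistent (i.e.\ that $\mbf{p}$ defined from $\mbs{\psi}$ and $\mbf{q}$ defined from $(\mbf{v},\mbs{\psi})$ indeed satisfy~\eqref{eq:pq_signals} without circularity, which they do because $\mbf{G}_2$ has zero feedthrough from $\mbf{q}$ to $\mbf{p}$). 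No additional hypothesis on $\mbf{G}_1$ or $\mbf{G}_2$ is needed beyond the requirement that $\mbf{G}_2$'s feedthrough vanish, which is already built into the statement.
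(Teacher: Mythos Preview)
Your proposal is correct and follows essentially the same approach as the paper: both arguments amount to substituting the interconnection relations~\eqref{eq:pq_signals} into the two component PIEs and verifying, block by block, that the stacked dynamics coincide with those of $\mbf{G}$. The paper condenses the two directions into a single chain of equalities for $\mcl{T}\smallbmat{\mbf{v}_{t}\\\mbs{\psi}_{t}}-\mcl{A}\smallbmat{\mbf{v}\\\mbs{\psi}}$, whereas you spell out the forward and converse implications separately and add the (helpful but not strictly needed) remarks on Frech\'et differentiability, PI-operator closure, and the role of the zero feedthrough in $\mbf{G}_{2}$.
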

%\smallskip

\begin{proof}
	Let $\mbf{v}(t)\in\text{Z}^{\enn{1}}$ and $\mbs{\psi}(t)\in\text{Z}^{\enn{2}}$ for $t\geq 0$. 
	%Clearly, $\smallbmat{\mbf{v}(0)\\\mbs{\psi}(0)}=\smallbmat{\mbf{v}_{0}\\\mbs{\psi}_{0}}$ if and only if $\mbf{v}(0)=\mbf{v}_{0}$ and $\mbs{\psi}(0)=\mbs{\psi}_{0}$. 
	Then, $\mbf{p}(t)$ and $\mbf{q}(t)$ satisfy the PIEs defined by $\mbf{G}_{2}$ and $\mbf{G}_{1}$, respectively, if and only if they are as in~\eqref{eq:pq_signals}. In that case,
	\begin{align*}
		&\mcl{T}\slbmat{\mbf{v}_{t}(t)\\\mbs{\psi}_{t}(t)}\!-\!\mcl{A}\slbmat{\mbf{v}(t)\\\mbs{\psi}(t)}
		\!=\!\slbmat{\mcl{T}_{1}\mbf{v}_{t}(t)+\mcl{T}_{p}\mcl{C}_{p}\mbs{\psi}_{t}(t)\\\mcl{T}_{q}\mcl{C}_{q}\mbf{v}_{t}(t) +\mcl{T}_{2}\mbs{\psi}_{t}(t)+\mcl{T}_{q}\mcl{D}_{qp}\mcl{C}_{p}\mbs{\psi}_{t}(t)} \\
		&\hspace*{1.0cm} -\slbmat{\mcl{A}_{1}\mbf{v}(t)+\mcl{B}_{p}\mcl{C}_{p}\mbs{\psi}(t)\\\mcl{B}_{q}\mcl{C}_{q}\mbf{v}(t) +\mcl{A}_{2}\mbs{\psi}(t)+\mcl{B}_{q}\mcl{D}_{qp}\mcl{C}_{p}\mbs{\psi}(t)}	\\
		&\hspace*{2.25cm}=\slbmat{\mcl{T}_{p}\mbf{p}_{t}(t)+\mcl{T}_{1}\mbf{v}_{t}(t) -\mcl{A}_{1}\mbf{v}(t)-\mcl{B}_{p}\mbf{p}(t)\\\mcl{T}_{q}\mbf{q}_{t}(t) +\mcl{T}_{2}\mbs{\psi}_{t}(t) -\mcl{A}_{2}\mbs{\psi}(t)-\mcl{B}_{q}\mbf{q}(t)}.
	\end{align*}
	From this expression, it follows that $\smallbmat{\mbf{v}(t)\\\mbs{\psi}(t)}$ satisfies the PIE defined by $\mbf{G}$ if and only if $\mbf{v}(t)$ and $\mbs{\psi}(t)$ satisfy the PIEs defined by $\mbf{G}_{1}$ and $\mbf{G}_{2}$, respectively. 
\end{proof}

Using this result, we finally construct a PIE representation for the full delayed PDE in~\eqref{eq:PDDE_expanded_N2}.

%Finally, we construct a PIE representation for the DPDE~\eqref{eq:PDDE_expanded_N2}.

\begin{cor}\label{cor:PIE_PDDE}
	Suppose that $A,A_{d}\in L_{\infty}^{n\times 3n}[\Omega_{a}^{b}]$, $\tau>0$ and $B\in\R^{2n\times 4n}$ satisfies the conditions of Lem.~\ref{lem:Tmap_1D}. Define $\mcl{T}$ and $\mcl{A}$ as in Block~\ref{block:Top_Aop}. Then, $\smallbmat{\mbf{v}\\\mbs{\psi}}$ is a solution to the PIE defined by $\{\mcl{T},\mcl{A}\}$ with initial state $\smallbmat{\mbf{v}_{0}\\\mbs{\psi}_{0}}$ if and only if $\smallbmat{\mbf{u}\\\mbs{\phi}}=\mcl{T}\smallbmat{\mbf{v}\\\mbs{\psi}}$ is a solution to the DPDE defined by $\{A,A_{d},B,\tau\}$ with initial state $\smallbmat{\mbf{u}_{0}\\\mbs{\phi}_{0}}=\mcl{T}\smallbmat{\mbf{v}_{0}\\\mbs{\psi}_{0}}$. Conversely, $\smallbmat{\mbf{u}\\\mbs{\phi}}$ is a solution to the DPDE defined by $\{A,A_{d},B,\tau\}$ with initial state $\smallbmat{\mbf{u}_{0}\\\mbs{\phi}_{0}}$ if and only if $\smallbmat{\mbf{v}\\\mbs{\psi}}=\smallbmat{\partial_{x}^2\mbf{u}\\\partial_{s}\partial_{x}^2\mbs{\phi}}$ is a solution to the PIE defined by $\{\mcl{T},\mcl{A}\}$ with initial state $\smallbmat{\mbf{v}_{0}\\\mbs{\psi}_{0}}=\smallbmat{\partial_{x}^2\mbf{u}_{0}\\\partial_{s}\partial_{x}^2\mbs{\phi}_{0}}$.
\end{cor}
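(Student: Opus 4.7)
My plan is to apply the two prior subsystem lemmas and then invoke Proposition~\ref{prop:PIE_interconnection} to close the loop. The expanded representation~\eqref{eq:PDDE_expanded_N2} naturally decomposes into the 1D PDE~\eqref{eq:PDE_1D} in $\mbf{u}(t)\in X_B$, driven by the input $\mbf{p}(t)=A_d\smallbmat{\mbs{\phi}(t,1)\\\mbs{\phi}_x(t,1)\\\mbs{\phi}_{xx}(t,1)}$, fed back from the 2D transport PDE~\eqref{eq:PDE_2D} in $\mbs{\phi}(t)\in Y_{\mbf{u}(t)}$, which in turn takes $\mbf{v}(t)=\mbf{u}_{xx}(t)$ as an input. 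Lemma~\ref{lem:PIE_1D} replaces the 1D piece by an equivalent PIE in the fundamental state $\mbf{v}=\partial_x^2\mbf{u}$, and Lemma~\ref{lem:PIE_2D} replaces the 2D piece by an equivalent PIE in $\mbs{\psi}=\partial_s\partial_x^2\mbs{\phi}$, using the mapping $\mbs{\phi}=\mcl{T}_1\mbf{v}+\mcl{T}_2\mbs{\psi}$ from Lemma~\ref{lem:Tmap_2D}. Both equivalences are stated bidirectionally on trajectories and on initial data, which is what I will chain at the end.

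The next step is to cast both PIEs in the standardized input/output form so Proposition~\ref{prop:PIE_interconnection} applies. The key observation is that the 2D output $\mbf{p}=\mcl{A}_{11,d}\mbf{v}+\mcl{A}_{12}\mbs{\psi}$ contains a direct feedthrough from the input $\mbf{v}$, while the 1D output (which we simply take to be the state $\mbf{v}$ itself) has none. This forces the assignment $\mbf{G}_1=\,$2D subsystem (absorbing the feedthrough into $\mcl{D}_{qp}=\mcl{A}_{11,d}$) and $\mbf{G}_2=\,$1D subsystem. Reading off the dictionary from~\eqref{eq:PIE_1D} and~\eqref{eq:PIE_2D}, the 2D block contributes $\mcl{T}_p=\mcl{T}_1$, $\mcl{T}_1^{\mbf{G}_1}=\mcl{T}_2$, $\mcl{A}_1=\mcl{A}_{22}$, $\mcl{B}_p=0$, $\mcl{C}_q=\mcl{A}_{12}$, $\mcl{D}_{qp}=\mcl{A}_{11,d}$, while the 1D block contributes $\mcl{T}_q=0$, $\mcl{T}_2^{\mbf{G}_2}=\mcl{T}_1$, $\mcl{A}_2=\mcl{A}_{11}$, $\mcl{B}_q=I$, $\mcl{C}_p=I$.

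Substituting into the interconnection formulas of Proposition~\ref{prop:PIE_interconnection} produces a combined PIE on $(\mbs{\psi},\mbf{v})$, and a simple row/column swap into the order $(\mbf{v},\mbs{\psi})$ used in Block~\ref{block:Top_Aop} yields exactly the stated $\mcl{T}$ and $\mcl{A}$; in particular, the feedthrough $\mcl{D}_{qp}=\mcl{A}_{11,d}$ is what generates the $\mcl{A}_{11}+\mcl{A}_{11,d}$ entry in $\mcl{A}$, and $\mcl{T}_q=0$ together with $\mcl{B}_p=0$ produces the zero in the off-diagonals of $\mcl{T}$ and $\mcl{A}$. The initial-data half of the claim then follows by stacking the two per-subsystem initial-state equivalences: Lemma~\ref{lem:PIE_1D} gives $\mbf{u}_0=\mcl{T}_1\mbf{v}_0$ and Lemma~\ref{lem:PIE_2D} gives $\mbs{\phi}_0=\mcl{T}_1\mbf{v}_0+\mcl{T}_2\mbs{\psi}_0$, which are exactly the components of $\smallbmat{\mbf{u}_0\\\mbs{\phi}_0}=\mcl{T}\smallbmat{\mbf{v}_0\\\mbs{\psi}_0}$.

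The only genuine difficulty I expect is bookkeeping: Block~\ref{block:Top_Aop}, Lemmas~\ref{lem:PIE_1D}--\ref{lem:PIE_2D}, and Proposition~\ref{prop:PIE_interconnection} all recycle the symbols $\mcl{T}_1,\mcl{T}_2,\mbf{v},\mbs{\psi},\mbf{p},\mbf{q}$ with different roles, and one must fix the correspondence so that the feedthrough sits in the correct block and the state ordering is consistent. Once that dictionary is pinned down, every remaining verification is direct substitution, and no additional analytic content beyond the three cited results is required.
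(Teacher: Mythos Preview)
Your proposal is correct and follows essentially the same route as the paper: decompose the DPDE into the 1D and 2D subsystems, invoke Lemmas~\ref{lem:PIE_1D} and~\ref{lem:PIE_2D} to pass to the subsystem PIEs, and then apply Proposition~\ref{prop:PIE_interconnection} to obtain the combined PIE defined by the operators in Block~\ref{block:Top_Aop}. The paper's own proof simply asserts that Block~\ref{block:Top_Aop} matches the interconnection formulas and chains the three cited results, whereas you carry out the operator dictionary explicitly (correctly forcing the 2D block to play the role of $\mbf{G}_1$ so that the feedthrough $\mcl{A}_{11,d}$ lands in $\mcl{D}_{qp}$, and then permuting the state order); this extra bookkeeping is exactly what the paper suppresses under ``by definition of the operators $\mcl{T},\mcl{A}$.''
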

%\begin{proof}
%	By definition of the operators $\mcl{T},\mcl{A}$, and invoking Prop.~\ref{prop:PIE_interconnection}, $\smallbmat{\mbf{v}\\\mbs{\psi}}$ is a solution to the PIE defined by $\{\mcl{T},\mcl{A}\}$ with initial state $\smallbmat{\mbf{v}_{0}\\\mbs{\psi}_{0}}$ if and only if $\mbf{v}$ and $\mbs{\psi}$ are solutions to the PIEs~\eqref{eq:PIE_1D} and~\eqref{eq:PDE_2D} with initial states $\mbf{v}_{0}$ and $\mbs{\psi}_{0}$, respectively. By Lem.~\ref{lem:PIE_1D} and Lem.~\ref{lem:PIE_2D}, it follows that $\smallbmat{\mbf{v}\\\mbs{\psi}}$ is a solution to the PIE defined by $\{\mcl{T},\mcl{A}\}$ if and only if $\mbf{u}=\mcl{T}_{1}\mbf{v}$ and $\mbs{\phi}=\mcl{T}_{1}\mbf{v}+\mcl{T}_{2}\mbs{\psi}$ are solutions to the PDEs~\eqref{eq:PDE_1D} and~\eqref{eq:PDE_2D} with initial states $\mbf{u}_{0}=\mcl{T}_{1}\mbf{v}$ and $\mbs{\phi}_{0}=\mcl{T}_{1}\mbf{v}_{0}+\mcl{T}_{2}\mbs{\psi}_{0}$, respectively. Taking the interconnection of these PDEs, and invoking the definition of $\mcl{T}=\smallbmat{\mcl{T}_{1}&0\\\mcl{T}_{1}&\mcl{T}_{2}}$, we finally conclude that $\smallbmat{\mbf{v}\\\mbs{\psi}}$ is a solution to the PIE defined by $\{\mcl{T},\mcl{A}\}$ if and only if $\smallbmat{\mbf{u}\\\mbs{\phi}}=\mcl{T}\smallbmat{\mbf{v}\\\mbs{\psi}}$ is a solution to the DPDE defined by $\{A,A_{d},B,\tau\}$ with initial state $\smallbmat{\mbf{u}_{0}\\\mbs{\phi}_{0}}=\mcl{T}\smallbmat{\mbf{v}_{0}\\\mbs{\psi}_{0}}$.
%	The converse result follows through similar reasoning.
%\end{proof}
\begin{proof}
	%Invoking Prop.~\ref{prop:PIE_interconnection} and the definition of operators $\mcl{T},\mcl{A}$,
	By definition of the operators $\mcl{T},\mcl{A}$, and invoking Prop.~\ref{prop:PIE_interconnection}, $\smallbmat{\mbf{v}\\\mbs{\psi}}$ is a solution to the PIE defined by $\{\mcl{T},\mcl{A}\}$ if and only if $\mbf{v}$ and $\mbs{\psi}$ are solutions to the PIEs~\eqref{eq:PIE_1D} and~\eqref{eq:PIE_2D}, respectively. By Lem.~\ref{lem:PIE_1D} and Lem.~\ref{lem:PIE_2D}, it follows that $\smallbmat{\mbf{v}\\\mbs{\psi}}$ is a solution to the PIE defined by $\{\mcl{T},\mcl{A}\}$ if and only if $\mbf{u}=\mcl{T}_{1}\mbf{v}$ and $\mbs{\phi}=\mcl{T}_{1}\mbf{v}+\mcl{T}_{2}\mbs{\psi}$ are solutions to the PDEs~\eqref{eq:PDE_1D} and~\eqref{eq:PDE_2D}, respectively. Taking the interconnection of these PDEs, we finally conclude that $\smallbmat{\mbf{v}\\\mbs{\psi}}$ is a solution to the PIE defined by $\{\mcl{T},\mcl{A}\}$ if and only if $\smallbmat{\mbf{u}\\\mbs{\phi}}=\mcl{T}\smallbmat{\mbf{v}\\\mbs{\psi}}=\smallbmat{\mcl{T}_{1}&0\\\mcl{T}_{1}&\mcl{T}_{2}}\smallbmat{\mbf{v}\\\mbs{\psi}}$ is a solution to the DPDE defined by $\{A,A_{d},B,\tau\}$.
	The converse result follows by similar reasoning.
\end{proof}

\section{Testing Stability in the PIE Representation}\label{sec:stability}

Having established a bijective map between the solution of the delayed PDE~\eqref{eq:PDDE_expanded_N2} and that of an associated PIE, we now show that the PIE representation can be used to formulate a convex optimization problem to verify stability of the delayed PDE. To derive such a result for a general PIE as in~\eqref{eq:standard_PIE}, recall the space $\text{Z}^{\enn{}}[\Omega]$ from~\eqref{eq:Zspace}.
%that we define the state-space
%\begin{equation*}
%	\text{Z}^{\enn{}}[\Omega]:=\R^{n_{0}}\times L_2^{n_{1}}[\Omega_{1}]\times L_2^{n_{2}}[\Omega_{2}]\times L_2^{n_{3}}[\Omega_{1}\times\Omega_{2}]
%\end{equation*}
%We endow this space with an inner product, defined for $\mbf{u}=(u_{0},\mbf{u}_{1},\mbf{u}_{2},\mbf{u}_{3})\in\text{Z}^{\enn{}}$ as
We define
%For $\mbf{u}=(u_{0},\mbf{u}_{1},\mbf{u}_{2},\mbf{u}_{3})\in\text{Z}^{\enn{}}$, $\mbf{v}=(v_{0},\mbf{v}_{1},\mbf{v}_{2},\mbf{v}_{3})\in\text{Z}^{\enn{}}$, we define
\begin{equation*}
	\ip{\mbf{u}}{\mbf{v}}_{\text{Z}}
	=u_{0}^T v_{0} + \ip{\mbf{u}_{1}}{\mbf{v}_{1}}_{L_2} + \ip{\mbf{u}_{2}}{\mbf{v}_{2}}_{L_2} + \ip{\mbf{u}_{3}}{\mbf{v}_{3}}_{L_2}.
\end{equation*}
%for $\mbf{u}=(u_{0},\mbf{u}_{1},\mbf{u}_{2},\mbf{u}_{3})\in\text{Z}^{\enn{}}$. 
for $\mbf{u}=(u_{0},\mbf{u}_{1},\mbf{u}_{2},\mbf{u}_{3})\in\text{Z}^{\enn{}}$, $\mbf{v}=(v_{0},\mbf{v}_{1},\mbf{v}_{2},\mbf{v}_{3})\in\text{Z}^{\enn{}}$.
%The following theorem shows that existence of a Lyapunov functional $V(\mbf{v})=\ip{\mcl{T}\mbf{v}}{\mcl{P}\mcl{T}\mbf{v}}_{\text{Z}}$ for a PIE can be tested as a convex Linear PI Inequality (LPI) optimization problem. 

\begin{thm}\label{thm:stability_as_LPI}
	Let $\{\mcl{T},\mcl{A}\}\in\mbf{\Pi}^{\enn{}\times\enn{}}$, and suppose that there exist constants $\epsilon,\alpha>0$ and a PI operator $\mcl{P}:\text{Z}^{\enn{}}\to\text{Z}^{\enn{}}$ such that $\mcl{P}=\mcl{P}^*$, $\mcl{P}\succeq \epsilon^2 I$, and
	\begin{align}\label{eq:stability_LPI}
		\mcl{A}^*\mcl{P}\mcl{T}+\mcl{T}^*\mcl{P}\mcl{A}\preceq -2\alpha\mcl{T}^*\mcl{P}\mcl{T}.
	\end{align}
	%Let $\zeta:=\|\mcl{P}\|_{\mcl{L}_{\text{Z}}}$. 
	Then, any solution $\mbf{w}$ to the PIE defined by $\{\mcl{T},\mcl{A}\}$ satisfies
	\begin{align*}
		\|\mcl{T}\mbf{w}(t)\|_{\text{Z}}\leq \frac{\zeta}{\epsilon}\|\mcl{T}\mbf{w}(0)\|_{\text{Z}} e^{-\alpha t},\quad \text{where}~\zeta:=\sqrt{\|\mcl{P}\|_{\mcl{L}_{\text{Z}}}}.
	\end{align*}
\end{thm}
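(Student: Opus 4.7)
The plan is to follow the classical Lyapunov-Krasovskii strategy adapted to the PIE setting, using $\mcl{P}$ to define a quadratic functional on the fundamental state and translating the operator inequality into exponential decay of that functional.

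First, define the candidate Lyapunov functional
\begin{equation*}
V(t) := \ip{\mcl{T}\mbf{v}(t)}{\mcl{P}\mcl{T}\mbf{v}(t)}_{\text{Z}} = \ip{\mbf{v}(t)}{\mcl{T}^*\mcl{P}\mcl{T}\mbf{v}(t)}_{\text{Z}}.
\end{equation*}
Since $\mcl{P}=\mcl{P}^*$ and $\mcl{P}\succeq\epsilon^2 I$, the two-sided bound
\begin{equation*}
\epsilon^2\|\mcl{T}\mbf{v}(t)\|_{\text{Z}}^2 \leq V(t) \leq \|\mcl{P}\|_{\mcl{L}_{\text{Z}}}\|\mcl{T}\mbf{v}(t)\|_{\text{Z}}^2 = \zeta^2\|\mcl{T}\mbf{v}(t)\|_{\text{Z}}^2
\end{equation*}
holds for every $t\geq 0$. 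These are the bounds that will convert decay of $V$ into decay of $\|\mcl{T}\mbf{v}\|_{\text{Z}}$.

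Second, differentiate $V$ along a solution. Frech\'et differentiability of $\mbf{v}$ (from the definition of a PIE solution) and boundedness of the PI operators $\mcl{T}$ and $\mcl{P}$ justify passing $d/dt$ through the inner product, and self-adjointness of $\mcl{P}$ yields
\begin{equation*}
\dot V(t) = \ip{\mcl{T}\mbf{v}_t(t)}{\mcl{P}\mcl{T}\mbf{v}(t)}_{\text{Z}} + \ip{\mcl{T}\mbf{v}(t)}{\mcl{P}\mcl{T}\mbf{v}_t(t)}_{\text{Z}}.
\end{equation*}
Substituting the PIE dynamics $\mcl{T}\mbf{v}_t(t) = \mcl{A}\mbf{v}(t)$ and collecting into the adjoint form gives
\begin{equation*}
\dot V(t) = \ip{\mbf{v}(t)}{(\mcl{A}^*\mcl{P}\mcl{T} + \mcl{T}^*\mcl{P}\mcl{A})\mbf{v}(t)}_{\text{Z}}.
\end{equation*}
Applying the hypothesis~\eqref{eq:stability_LPI} then yields $\dot V(t) \leq -2\alpha\ip{\mbf{v}(t)}{\mcl{T}^*\mcl{P}\mcl{T}\mbf{v}(t)}_{\text{Z}} = -2\alpha V(t)$.

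Third, integrate the differential inequality via Gr\"onwall (or equivalently observe that $\frac{d}{dt}(e^{2\alpha t}V(t))\leq 0$) to obtain $V(t)\leq V(0)e^{-2\alpha t}$. Combining with the two-sided bound from the first step,
\begin{equation*}
\epsilon^2\|\mcl{T}\mbf{v}(t)\|_{\text{Z}}^2 \leq V(t) \leq V(0)e^{-2\alpha t} \leq \zeta^2\|\mcl{T}\mbf{v}(0)\|_{\text{Z}}^2 e^{-2\alpha t},
\end{equation*}
and taking square roots yields the claimed estimate.

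The only genuinely delicate step is the time-differentiation of $V$: strictly speaking, I need $\mcl{T}\mbf{v}(t)$ to be differentiable in $\text{Z}^{\enn{}}$ with derivative $\mcl{T}\mbf{v}_t(t)=\mcl{A}\mbf{v}(t)$, which follows from Frech\'et differentiability of $\mbf{v}$ together with boundedness of $\mcl{T}$ as a map between the appropriate $\text{Z}$-spaces. Everything else is a direct application of the PI-algebra adjoint structure and a standard Gr\"onwall argument, so no extra machinery is required.
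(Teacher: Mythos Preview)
Your proposal is correct and follows essentially the same argument as the paper: define the Lyapunov functional $V=\ip{\mcl{T}\mbf{v}}{\mcl{P}\mcl{T}\mbf{v}}_{\text{Z}}$, bound it above and below via $\mcl{P}\succeq\epsilon^2 I$ and $\|\mcl{P}\|_{\mcl{L}_{\text{Z}}}=\zeta^2$, differentiate along the PIE dynamics to obtain $\dot V\le -2\alpha V$, and conclude by Gr\"onwall. The only addition you make is the explicit remark on justifying the time-differentiation of $V$, which the paper leaves implicit.
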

\smallskip
\begin{proof}	
	Consider the candidate Lyapunov functional $V(\mbf{w})=\ip{\mcl{T}\mbf{w}}{\mcl{P}\mcl{T}\mbf{w}}_{\text{Z}}$. Since $\mcl{P}\succeq\epsilon^2 I$ and $\|\mcl{P}\|_{\mcl{L}_{\text{Z}}}=\zeta^2$, this function is bounded above and below as
	\begin{equation*}
		\epsilon^2\|\mcl{T}\mbf{w}\|^2_{\text{Z}}\leq  V(\mbf{w})\leq \zeta^2\|\mcl{T}\mbf{w}\|_{\text{Z}}^2.
	\end{equation*}
	%	\begin{equation*}
		%		V(\mbf{w})=\ip{\mcl{T}\mbf{w}}{\mcl{P}\mcl{T}\mbf{w}}_{\text{Z}}\geq \epsilon \|\mcl{T}\mbf{w}\|^2_{\text{Z}}.
		%	\end{equation*}
	%	Since $\|\mcl{P}\|_{\mcl{L}_{\text{Z}}}=\zeta$, this function is bounded from above as
	%	\begin{equation*}
		%		V(\mbf{w})=
		%		\ip{\mcl{T}\mbf{w}}{\mcl{P}\mcl{T}\mbf{w}}_{\text{Z}}\leq \zeta\|\mcl{T}\mbf{w}\|_{\text{Z}}^2.
		%	\end{equation*}
	Now, let $\mbf{w}$ be an arbitrary solution to the PIE defined by $\{\mcl{T},\mcl{A}\}$. Then, the temporal derivative of $V$ along $\mbf{w}$ satisfies
	\begin{align*}
		\dot{V}(\mbf{w})
		&=\ip{\mcl{T}\mbf{w}_{t}}{\mcl{P}\mcl{T}\mbf{w}}_{\text{Z}}
		+\ip{\mcl{T}\mbf{w}}{\mcl{P}\mcl{T}\mbf{w}_{t}}_{\text{Z}}    \\
		&=\ip{\mcl{A}\mbf{w}}{\mcl{P}\mcl{T}\mbf{w}}_{\text{Z}}
		+\ip{\mcl{T}\mbf{w}}{\mcl{P}\mcl{A}\mbf{w}}_{\text{Z}}    \\
		&=\ip{\mbf{w}}{\left(\mcl{A}^*\mcl{P}\mcl{T} \!+\! \mcl{T}^*\mcl{P}\mcl{A}\right)\mbf{w}}_{\text{Z}}    \\
		&\leq -2\alpha\ip{\mcl{T}\mbf{w}}{\mcl{P}\mcl{T}\mbf{w}}_{\text{Z}}
		\hspace*{1.5cm}= -2\alpha V(\mbf{w}).
	\end{align*}
	Applying the Gr\"onwall-Bellman inequality, it follows that $V(\mbf{w}(t))\leq V(\mbf{w}(0))e^{-2\alpha t},$
	%	\begin{equation*}
		%		V(\mbf{w}(t))\leq V(\mbf{w}(0))e^{-\alpha t},
		%	\end{equation*}
	and therefore
	\begin{equation*}
		\|\mcl{T}\mbf{w}(t)\|^2_{\text{Z}}\leq (\zeta/\epsilon)^2\|\mcl{T}\mbf{w}(0)\|_{\text{Z}}^2 e^{-2\alpha t}.
	\end{equation*}
	\ \\[-2.6em]
\end{proof}

Thm.~\ref{thm:stability_as_LPI} shows that, for a PIE defined by $\{\mcl{T},\mcl{A}\}$, feasibility of the Linear PI Inequality (LPI)~\eqref{eq:stability_LPI} proves exponential stability of the function $\mcl{T}\mbf{w}$ for all solutions $\mbf{w}$ to the PIE.
By Cor.~\ref{cor:PIE_PDDE}, we also know that $\mbf{w}=\smallbmat{\mbf{v}\\\mbs{\psi}}$ is a solution to the PIE defined by $\{\mcl{T},\mcl{A}\}$ as in Block~\ref{block:Top_Aop} if and only if $\smallbmat{\mbf{u}\\\mbs{\phi}}=\mcl{T}\smallbmat{\mbf{v}\\\mbs{\psi}}=\mcl{T}\mbf{w}$ is a solution to the DPDE~\eqref{eq:PDDE_expanded_N2}, with $\smallbmat{\mbf{v}\\\mbs{\psi}}=\smallbmat{\mbf{u}_{xx}\\\mbs{\phi}_{sxx}}$. Using this result, we can thus test stability of solutions $\smallbmat{\mbf{u}\\\mbs{\phi}}$ to the DPDE as follows.

\begin{cor}\label{cor:stability_PDDE}
	Let $\{A,A_{d},B,\tau\}$ define a DPDE system, and let $\{\mcl{T},\mcl{A}\}$ define the associated PIE representation as in Cor.~\ref{cor:PIE_PDDE}. Suppose that there exist $\epsilon,\alpha,\zeta>0$ and $\mcl{P}$ satisfying the conditions of Thm.~\ref{thm:stability_as_LPI}. Then any solution $\smallbmat{\mbf{u}\\\mbs{\phi}}$ to the DPDE defined by $\{A,A_{d},B,\tau\}$ satisfies
	\begin{align*}
		\norm{\smallbmat{\mbf{u}(t)\\\mbs{\phi}(t)}}_{\text{Z}}\leq \frac{\zeta}{\epsilon}\norm{\smallbmat{\mbf{u}(0)\\\mbs{\phi}(0)}}_{\text{Z}} e^{-\alpha t}.
	\end{align*}
\end{cor}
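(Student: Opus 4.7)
The plan is to chain Corollary~\ref{cor:PIE_PDDE} (the bijection between DPDE and PIE solutions) with Theorem~\ref{thm:stability_as_LPI} (the exponential decay bound for PIE solutions under the LPI), and then reconcile the two norms.

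First, I would fix an arbitrary solution $\mbf{w} = \smallbmat{\mbf{u}\\\mbs{\phi}}$ of the DPDE defined by $\{A,A_d,B,\tau\}$. By Corollary~\ref{cor:PIE_PDDE}, the function $\smallbmat{\mbf{v}\\\mbs{\psi}} := \smallbmat{\partial_{x}^{2}\mbf{u}\\\partial_{s}\partial_{x}^{2}\mbs{\phi}}$ is then a solution to the PIE defined by $\{\mcl{T},\mcl{A}\}$, and moreover $\smallbmat{\mbf{u}(t)\\\mbs{\phi}(t)} = \mcl{T}\smallbmat{\mbf{v}(t)\\\mbs{\psi}(t)}$ for every $t\geq 0$. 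In particular, this identity holds at $t=0$, so the DPDE initial state and the associated PIE initial state are related by $\mbf{w}(0) = \mcl{T}\smallbmat{\mbf{v}(0)\\\mbs{\psi}(0)}$.

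Next, since by hypothesis $\epsilon,\alpha,\zeta>0$ and $\mcl{P}$ satisfy the conditions of Theorem~\ref{thm:stability_as_LPI}, that theorem applied to the PIE solution $\smallbmat{\mbf{v}\\\mbs{\psi}}$ yields
\begin{equation*}
    \bbbl\|\mcl{T}\slbmat{\mbf{v}(t)\\\mbs{\psi}(t)}\bbbr\|_{\text{Z}} \;\leq\; \frac{\zeta}{\epsilon}\,\bbbl\|\mcl{T}\slbmat{\mbf{v}(0)\\\mbs{\psi}(0)}\bbbr\|_{\text{Z}}\, e^{-\alpha t}, \qquad t\geq 0.
\end{equation*}
Substituting the identity $\mcl{T}\smallbmat{\mbf{v}(t)\\\mbs{\psi}(t)} = \smallbmat{\mbf{u}(t)\\\mbs{\phi}(t)} = \mbf{w}(t)$ on both sides gives the desired inequality, provided one can verify that the $\text{Z}$-norm and the stated norm $\|\mbf{w}\|$ agree.

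The last step is this norm identification, which is the only nontrivial bookkeeping. Here $\mbf{u}(t)\in L_2^{n}[\Omega_{a}^{b}]$ is a purely 1D component and $\mbs{\phi}(t)\in L_2^{n}[\Omega_{0a}^{1b}]$ is a purely 2D component, so the pair lives in a space of the form $\text{Z}^{\enn{}}[\Omega]$ of Eqn.~\eqref{eq:Zspace} with $n_0=n_2=0$, $n_1=n$ on $\Omega_a^b$, and $n_3=n$ on $\Omega_0^1\times\Omega_a^b$. With the corresponding inner product from the display preceding Theorem~\ref{thm:stability_as_LPI}, one has $\|\mbf{w}(t)\|_{\text{Z}}^{2} = \|\mbf{u}(t)\|_{L_2}^{2} + \|\mbs{\phi}(t)\|_{L_2}^{2} = \|\mbf{w}(t)\|^{2}$, so the PIE bound transfers directly, giving $\|\mbf{w}(t)\|\leq \beta\|\mbf{w}(0)\|e^{-\alpha t}$ with $\beta=\zeta/\epsilon$. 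The main (and only) obstacle is confirming this identification of the $\text{Z}$-norm with the stated norm on the DPDE state, which is essentially definitional; everything else is an immediate chaining of Corollary~\ref{cor:PIE_PDDE} and Theorem~\ref{thm:stability_as_LPI}.
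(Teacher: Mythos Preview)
Your proposal is correct and follows essentially the same approach as the paper: fix a DPDE solution, use Corollary~\ref{cor:PIE_PDDE} to obtain the corresponding PIE solution with $\mbf{w}=\mcl{T}\smallbmat{\mbf{v}\\\mbs{\psi}}$, apply Theorem~\ref{thm:stability_as_LPI}, and substitute back. The only addition is your explicit norm identification, which the paper leaves implicit.
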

\begin{proof}
	Let $\smallbmat{\mbf{u}\\\mbs{\phi}}$ be a solution to the DPDE defined by $\{A,A_{d},B,\tau\}$, and let $\smallbmat{\mbf{v}\\\mbs{\psi}}:=\smallbmat{\mbf{u}_{xx}\\\mbs{\phi}_{sxx}}$. By Cor.~\ref{cor:PIE_PDDE}, $\smallbmat{\mbf{v}\\\mbs{\psi}}$ solves the PIE defined by $\{\mcl{T},\mcl{A}\}$, and $\smallbmat{\mbf{u}\\\mbs{\phi}}=\mcl{T}\smallbmat{\mbf{v}\\\mbs{\psi}}$. By Thm.~\ref{thm:stability_as_LPI},
	\begin{align*}
		\norm{\smallbmat{\mbf{u}(t)\\\mbs{\phi}(t)}}_{\text{Z}}
		&=\norm{\mcl{T}\smallbmat{\mbf{v}(t)\\\mbs{\psi}(t)}}_{\text{Z}}	\\
		&\leq \frac{\zeta}{\epsilon}\norm{\mcl{T}\smallbmat{\mbf{v}(0)\\\mbs{\psi}(0)}}_{\text{Z}} e^{-\alpha t}=\frac{\zeta}{\epsilon}\norm{\smallbmat{\mbf{u}(0)\\\mbs{\phi}(0)}}_{\text{Z}} e^{-\alpha t}.\quad
	\end{align*}
	\ \\[-2.6em]
\end{proof}

By Cor.~\ref{cor:stability_PDDE}, we can finally test stability of the DPDE~\eqref{eq:PDDE_expanded_N2}, by the solving LPI~\eqref{eq:stability_LPI}. 
Note that stability is proven in the norm $\norm{\smallbmat{\mbf{u}(t)\\\mbs{\phi}(t)}}_{\text{Z}}^2=\|\mbf{u}(t)\|_{L_2}^2+\int_{0}^{1}\|\mbs{\phi}(t,s)\|_{L_2}^2ds$, bounding both the PDE state $\mbf{u}(t)$ and its history $\mbs{\phi}(t,s)=\mbf{u}(t-s\tau)$.

In order to numerically solve the LPI~\eqref{eq:stability_LPI}, we note that we can parameterize a cone of positive semidefinite PI operators $\mcl{P}\succeq 0$ by positive semidefinite matrices $P\succeq 0$ as
\begin{align*}
	\Pi_{+}:=\{\mcl{P}\in\Pi \mid \mcl{P}=\mcl{Z}^* \text{M}_{P} \mcl{Z},\ \mcl{Z}\in\Pi,\ P\succeq 0\},
\end{align*}
where $\mcl{Z}$ is some fixed PI operator. 
Then, if $\mcl{P}\in\Pi_{+}$, there exists some $P=[P^{1/2}]^T P^{1/2}\succeq 0$ and $\mcl{Z}\in\Pi$ such that, for any $\mbf{u}$ in the domain of $\mcl{P}$
\begin{align*}
	\ip{\mbf{u}}{\mcl{P}\mbf{u}}=\ip{\text{M}_{P^{1/2}}\mcl{Z}\mbf{u}}{\text{M}_{P^{1/2}}\mcl{Z}\mbf{u}}\geq 0,
\end{align*}
guaranteeing that $\mcl{P}\succeq 0$. In this manner, LPI conditions such as $\mcl{P}\succeq 0$ can be posed as LMI constraints $P\succeq 0$, allowing feasibility of the LPI in Thm.~\ref{thm:stability_as_LPI} to be numerically tested using semidefinite programming. This process of parsing LPIs as LMIs has been automated in the MATLAB software package PIETOOLS~\cite{shivakumar2021PIETOOLS}, wherein the operator $\mcl{Z}=\mcl{Z}_{d}$ used in the parameterization of $\mcl{P}\succeq 0$ is defined by monomials of degree at most $d$ in each of the spatial variables. Conservatism introduced through the parameterization $\mcl{P}=\mcl{Z}_{d}^* \text{M}_{P}\mcl{Z}_{d}$ can then be reduced by increasing the maximal degree $d$ of the monomials, at the expense of increasing the size of the matrix $P\succeq 0$ and thus the complexity of the LMI.

In Section~\ref{sec:numerical_examples}, we use the PIETOOLS software to test stability of several delayed PDE systems.

\section{PDEs with Delay at Boundary}\label{sec:other_delay}

In the previous section, we showed how stability of a 2nd order, linear, 1D PDE with delay can be verified by solving a LPI, based on the PIE representation associated to this PDE. This result can be readily generalized to coupled systems of $N$th order, linear, 1D PDEs with delay, using the fundamental state $\mbf{v}(t)=\partial_{x}^{N}\mbf{u}(t)$ for PDE state $\mbf{u}(t)\in H_{N}$, and using the formulae from~\cite{shivakumar2022GPDE_Arxiv} to compute an associated operator $\mcl{T}_{1}$ such that $\mcl{T}_{1}\mbf{v}(t)=\mbf{u}(t)$. 

More generally, using the formulae presented in~\cite{shivakumar2022GPDE_Arxiv}, a PIE representation for a very general class of PDEs with delay can be constructed. For example, consider a 1D PDE coupled to an ODE with delay, taking the form
\begin{align}\label{eq:PDE_ODEdelay}
	&\dot{w}(t)=A_{00}w(t) +A_{00,d}w(t-\tau) +A_{01}\smallbmat{\mbf{u}(t,0)\\\mbf{u}_{x}(t,0)\\\mbf{u}(t,1)\\\mbf{u}_{x}(t,1)},	\\
	%+\int_{0}^{1}A_{01,d}(x)\slbmat{\mbf{u}(t,x)\\\mbf{u}_{x}(t,x)\\ \mbf{u}_{xx}(t,x)} dx,	
	&\mbf{u}_{t}(t)=A_{10}w(t)+A_{11}\slbmat{\mbf{u}(t)\\\mbf{u}_{x}(t)\\ \mbf{u}_{xx}(t)},\hspace*{1.2cm} \mbf{u}(t)\in H_{2}^{n}[\Omega_{0}^{1}],	\notag\\
	&B_{0}w(t) +B_{1}\smallbmat{\mbf{u}(t,0)\\\mbf{u}_{x}(t,0)\\\mbf{u}(t,1)\\\mbf{u}_{x}(t,1)}
	= 0 \hspace*{3.5cm} t\geq 0,	\notag
\end{align}
Introducing the state $\phi(t,x)=w(t-x\tau)$ for $x\in[0,1]$, this system can be equivalently represented as
\begin{align*}
	&\dot{w}(t)=A_{00}w(t) +A_{00,d}\phi(t,1) +A_{01}\smallbmat{\mbf{u}(t,0)\\\mbf{u}_{x}(t,0)\\\mbf{u}(t,1)\\\mbf{u}_{x}(t,1)},	\\
	%+\int_{0}^{1}A_{01,d}(x)\slbmat{\mbf{u}(t,x)\\\mbf{u}_{x}(t,x)\\ \mbf{u}_{xx}(t,x)} dx,	
	&\phi_{t}(t)=-(1/\tau)\phi_{x}(t),\hspace*{3.25cm} \mbf{v}(t)\in H_{1}[\Omega_{0}^{1}],	\\
	&\mbf{u}_{t}(t)=A_{10}w(t)+A_{11}\slbmat{\mbf{u}(t)\\\mbf{u}_{x}(t)\\ \mbf{u}_{xx}(t)},\hspace*{1.5cm} \mbf{u}(t)\in H_{2}^{n}[\Omega_{0}^{1}],	\notag\\
	&B_{0}w(t) +B_{1}\smallbmat{\mbf{u}(t,0)\\\mbf{u}_{x}(t,0)\\\mbf{u}(t,1)\\\mbf{u}_{x}(t,1)}
	= 0 \qquad w(t)-\mbf{v}(t,0)=0,\quad t\geq 0.	\notag
\end{align*}
Then, the dynamics are defined by a coupled ODE - 1D PDE system, on the state $\smallbmat{w(t)\\\mbs{\phi}(t)\\\mbf{u}(t)}$. Using the formulae from~\cite{shivakumar2022GPDE_Arxiv}, an equivalent PIE representation for this system can be readily constructed, introducing the fundamental state $\smallbmat{w(t)\\\mbs{\phi}_{x}(t)\\\mbf{u}_{xx}(t)}$.

Similarly, consider a 1D PDE with delay in the boundary conditions,
\begin{align}\label{eq:PDE_BCdelay}
	&\mbf{u}_{t}(t)=A\slbmat{\mbf{u}(t)\\\mbf{u}_{x}(t)\\ \mbf{u}_{xx}(t)},\hspace*{2.0cm} \mbf{u}(t)\in H_{2}^{n}[\Omega_{0}^{1}],	\\
	&B\smallbmat{\mbf{u}(t,0)\\\mbf{u}_{x}(t,0)\\\mbf{u}(t,1)\\\mbf{u}_{x}(t,1)} +B_{d}\smallbmat{\mbf{u}(t-\tau,0)\\\mbf{u}_{x}(t-\tau,0)\\\mbf{u}(t-\tau,1)\\\mbf{u}_{x}(t-\tau,1)}
	= 0 \hspace*{1.5cm} t\geq 0,\notag
\end{align}
In this case, introducing $\mbs{\phi}(t,s)=\smallbmat{\mbf{u}(t-s\tau,0)\\\mbf{u}_{x}(t-s\tau,0)\\\mbf{u}(t-s\tau,1)\\\mbf{u}_{x}(t-s\tau,1)}$ for $s\in[0,1]$, the PDE may be represented as
\begin{align*}
	&\mbf{u}_{t}(t)=A(x)\slbmat{\mbf{u}(t)\\\mbf{u}_{x}(t)\\ \mbf{u}_{xx}(t)},\hspace*{2.5cm} \mbf{u}(t)\in H_{2}^{n}[\Omega_{0}^{1}],	\\
	&\mbs{\phi}_{t}(t)=-(1/\tau)\mbs{\phi}_{s}(t),\hspace*{2.5cm} \mbs{\phi}(t)\in H_{1}^{4n}[\Omega_{0}^{1}],	\\
	&B\smallbmat{\mbf{u}(t,0)\\\mbf{u}_{x}(t,0)\\\mbf{u}(t,1)\\\mbf{u}_{x}(t,1)} +B_{d}\mbs{\phi}(t,1)= 0,	\qquad
	\mbs{\phi}(t,0)-\smallbmat{\mbf{u}(t,0)\\\mbf{u}_{x}(t,0)\\\mbf{u}(t,1)\\\mbf{u}_{x}(t,1)}=0.
\end{align*}
This representation is again expressed only in terms of coupled 1D PDEs on the state $\smallbmat{\mbf{u}(t)\\\mbs{\phi}(t)}$, involving no explicit delay. Introducing the associated fundamental state $\smallbmat{\mbf{u}_{xx}(t)\\\mbs{\phi}_{x}(t)}$, this system too can be converted to an equivalent PIE using the formulae from~\cite{shivakumar2022GPDE_Arxiv}.
%, at which point stability can be readily tested by solving the LPI from Thm.~\ref{thm:stability_as_LPI}.

Using this approach of modeling the delay using a transport equation, a very general class of infinite-dimensional systems with delay can be represented as coupled ODE-PDE systems, with the PDE being either 2D or 1D depending on whether the delay occurs in the dynamics (as in the previous section) or in the boundary conditions (as in Eqn.~\eqref{eq:PDE_BCdelay}). Then, an equivalent PIE representation of the system can be constructed using the methodology proposed in the previous section or that presented in~\cite{shivakumar2022GPDE_Arxiv}, at which point stability can be readily tested by solving the LPI from Thm.~\ref{thm:stability_as_LPI}. The appendices of this paper provide a full, though somewhat abstract overview of how a PIE representation can be constructed for a general class of ODE-PDE systems with delay. However, this process of computing the PIE representation and subsequently testing stability has also already been automated in the Matlab software suite PIETOOLS 2022~\cite{shivakumar2021PIETOOLS}, applying semidefinite programming to test feasibility of the LPI~\eqref{eq:stability_LPI}. In the next section, we use this software to numerically test stability of several PDE systems with delay.

%computing the operators $\{\mcl{T},\mcl{A}\}$ defining the associated PIE representation, and testing feasibility of the LPI in Thm.~\ref{thm:stability_as_LPI} for these operators $\{\mcl{T},\mcl{A}\}$. More generally, this theorem proves that given any PIE defined by $\{\mcl{T},\mcl{A}\}$, stability of solutions $\mcl{T}\mbf{v}$ to the associated PIE can be 
%As such, if for any ODE-PDE system (with or without delay) we can construct an equivalent PIE representation, so that $\mbf{v}$ is a solution to the PIE if and only if $\mcl{T}\mbf{v}$ is a solution to the ODE-PDE, then we can apply Thm.~\ref{thm:stability_as_LPI} to test stability of this system. 

%
%%\clearpage

\section{Numerical Examples}\label{sec:numerical_examples}

In this section, we provide several numerical examples, illustrating how stability of different ODE-PDE systems with delay can be numerically tested by verifying feasibility of the LPI from Thm.~\ref{thm:stability_as_LPI}. In each case, the PIETOOLS software package~\cite{shivakumar2020PIE_pietools} is used to declare the delayed system as a coupled systems of ODEs and PDEs, convert the system to an equivalent PIE, and subsequently declare and solve the stability LPI.

\subsection{Heat Equation with Delay in PDE}

Consider a heat equation with a delayed reaction term, as studied in~\cite{fridman2009stabilityDPDE,ccalicskan2009stability}
\begin{align}\label{eq:Example2}
	\mbf{u}_{t}(t,x)&=\mbf{u}_{xx}(t,x) + r\mbf{u}(t,x) - \mbf{u}(t-\tau,x),\quad x\in\Omega_{0}^{\pi},	\notag\\
	\mbf{u}(t,0)&=\mbf{u}(t,\pi)=0.
\end{align}
We can model the delay using a 2D transport equation as
%We represent this system as a coupled 1D PDE -- 2D PDE,
\begin{align*}
	\mbf{u}_{t}(t,x)&=\mbf{u}_{xx}(t,x) + r\mbf{u}(t,x) - \mbf{v}(t,1,x),	& &\hspace*{-1.5cm}x\in\Omega_{0}^{\pi},	\\
	\mbs{\phi}_{t}(t,s,x)&=-(1/\tau)\mbs{\phi}_{s}(t,s,x),& &\hspace*{-1.4cm}s\in\Omega_{0}^{1}, \\
	\mbf{u}(t,0)&=\mbf{u}(t,\pi)=0, \qquad
	\mbs{\phi}(t,s,0)=\mbs{\phi}(t,s,\pi)=0, \\
	\mbs{\phi}(t,0,x)&=\mbf{u}(t,x).
\end{align*}
Using PIETOOLS, we then obtain a PIE representation
\begin{align*}
	&(\mcl{T}\mbf{v}_{t})(t,x)
	=\!\mbf{v}(t,x)\!+\!(r\!-\!1)(\mcl{T}\mbf{v})(t,x) \!-\!\! \int_{0}^{1}\!\!(\mcl{T}\mbs{\psi})(t,s,x)ds, \\[-0.2em]
	&(\mcl{T}\mbf{v}_{t})(t,x)+\int_{0}^{s}(\mcl{T}\mbs{\psi}_{t})(t,\nu,x)d\nu =-\frac{1}{\tau}(\mcl{T}\mbs{\psi})(t,s,x),
\end{align*}
where $\mbf{v}(t,x)=\partial_{x}^2\mbf{u}(t,x)$, $\mbs{\psi}(t,s,x)=\partial_{s}\partial_{x}^2\mbs{\phi}(t,s,x)$, and
\begin{align*}
	\bl(\mcl{T}\mbf{v}\br)(t,x)=\int_{0}^{x}\!\theta(x-1)\mbf{v}(t,\theta)d\theta + \int_{x}^{\pi}\!\! x(\theta-1)\mbf{v}(t,\theta)d\theta.
\end{align*}
In~\cite{ccalicskan2009stability}, it was shown that for $0<r<2$, the DPDE~\eqref{eq:Example2} is stable if and only if and only if $\tau<\bar{\tau}:=\frac{\cos^{-1}(r-1)}{\sqrt{2r-r^2}}$. Performing bisection on the value of the delay $\tau$, stability for different values of $r$ can be numerically verified with PIETOOLS for delays up to $\tau_{\text{LPI}}$ as presented in Tab.~\ref{tab:tau}. Here, for each test, the LPI~\eqref{eq:stability_LPI} was numerically parsed as an LMI by parameterizing the positive operator $\mcl{P}\succeq 0$ by a symmetric positive semidefinit matrix $P\in\R^{27\times 27}$. The associated Lyapunov-Krasovskii functional is then parameterized by $\frac{1}{2}27(27+1)=378$ decision variables, substantially more complicated than the Lyapunov-Krasovskii functional used to test stability in~\cite{fridman2009stabilityDPDE}, only involving 5 decision variables. Although the resulting stable delay bound is much less conservative using the LPI approach ($\tau_{\text{LPI}}=1.03470$ versus $\tau=1.025$ for $r=1.9$), the computational complexity of the LMI used to achieve this bound is also much greater.

%In addition, fixing $r=1.9$, stability can be verified for different values of the delay $\tau$ with exponential decay rates rates $\alpha$ as in Tab.~\ref{tab:alpha}.
%
% Testing the conditions of Thm.~\ref{thm:stability_as_LPI} with $\epsilon=10^{-2}$, exponential stability can be verified up to $\tau=1.03470$, with decay rate $\alpha=3.814\cdot 10^{-6}$. For $\tau\in(0,1]$, stability can be verified with exponential decay rates rates $\alpha$ as in Tab.~\ref{tab:alpha}.
 
 \begin{table}[h!]
 		\setlength{\tabcolsep}{5pt}
 		% 0.1	6.172581371221286	6.172487621221286
 		% 0.5   2.418399152312291	2.418380597624791
 		% 0.8	1.808697355037356	1.808694425349856
 		% 1.2	1.397677220367304	1.397674290679804
 		% 1.5	1.209199576156145	1.209197623031145
 		% 1.9	1.034726470235393
 		\begin{tabular}{l|cccccc}
 			$r$ 	 			& 0.1 	 & 0.5 & 0.8 & 1.2 & 1.5 & 1.9	\\\hline
 			$\bar{\tau}$ 		& 6.17258 & 2.41839 & 1.80870 & 1.39768 & 1.20920 & 1.03472	\\
 			$\tau_{\text{LPI}}$ & 6.17248 & 2.41837 & 1.80869 & 1.39767 & 1.20919 & 1.03470
 		\end{tabular}	
 		\caption{
 			Maximal delay $\tau_{\text{LPI}}$ for which exponential stability of System~\eqref{eq:Example2} was verified using Thm.~\ref{thm:stability_as_LPI} with $\epsilon=10^{-2}$, $\alpha=0$.
 		}\label{tab:tau}
 	\vspace*{-0.5cm}
 \end{table}

%\begin{table}[h!]
%	\setlength{\tabcolsep}{5pt}
%	%	\begin{tabular}{l|cccccccccc}
%		%		$\tau$ 	 & 0.1    & 0.2    &  0.3 & 0.4      & 0.5    & 0.6     & 0.7   & 0.8    & 0.9 & 1.0	\\\hline
%		%		$\alpha$ & 0.2224 & 0.2509 & 0.2884 & 0.3413 & 0.2773 & 0.2016 & 0.1906 & 0.1668 & & 0.06248
%		%	\end{tabular}
%	\begin{tabular}{l|ccccccc}
%		$\tau$ 	 & 0.1    &  0.3   & 0.5    & 0.7    & 0.8    & 0.9 & 1.0	\\\hline
%		$\alpha$ & 0.1112 & 0.1442 & 0.1386 & 0.0953 & 0.0842 & 0.0532   & 0.0312
%	\end{tabular}
%	\caption{
%		Decay rates $\alpha$ for which exponential stability of System~\eqref{eq:Example2} with $r=1.9$ can be verified using Thm.~\ref{thm:stability_as_LPI}
%		%Exponential decay rates $\alpha$ for solutions to~\eqref{eq:Example2} with $r=1.9$ 
%		%Lower bounds $\alpha$ on the exponential decay rate for solutions to~\eqref{eq:Example2} with $r=1.9$ and increasing delay $\tau$.
%	}\label{tab:alpha}
%	\vspace*{-0.8cm}
%\end{table}

%	Vandewalle 2005:
%	\begin{align*}
	%	 \mbf{u}_{t}(t,x)&=\partial_{x}^2\mbf{u}(t,x) + a\mbf{u}(t-\tau,x),
	%	\end{align*}
%	stable for $-\frac{\pi}{2}\leq a\tau\leq 0$ (for $x\in\R$?)
%	
%	\begin{align*}
	%	 \mbf{u}_{t}(t,x)&=\partial_{x}^2\mbf{u}(t,x) + a\int_{-\tau}^{0}\mbf{u}(t+s,x),
	%	\end{align*}	
%	stable for $-\frac{\pi^2}{2}\leq a\tau^2\leq 0$ (for $x\in\R$?)

%\subsection{Heat Equation with Delay in Boundary}
%
%\cite{nicaise2009stability_waveeq_heateq_BCdelay}

\subsection{Wave Equation with Delay in Boundary}

Consider a wave equation with delay in the boundary,
\begin{align}\label{eq:Example3}
	\mbf{u}_{tt}(t,x)&=\partial_{x}^2 \mbf{u}(t,x)	\hspace*{3.5cm}	x\in\Omega_{0}^{1}, \\
	\mbf{u}(t,0)&=0,	\hspace*{0,5cm}
	\partial_{x}\mbf{u}(t,1)=(1-\mu)\mbf{u}_{t}(t,1) + \mu \mbf{u}_{t}(t-\tau,1).	\notag
\end{align}
where $\mu\in (0,1)$. Introducing
\begin{align*}
	&\mbf{u}_{1}(t)=\mbf{u}(t),	\hspace*{3.0cm}
	\mbf{u}_{2}(t)=\mbf{u}_{t}(t),	\\
	&\mbs{\phi}_{1}(t,x)=\mbf{u}_{1}(t-\tau x,1), \hspace*{1.25cm}
	\mbs{\phi}_{2}(t,x)=\mbf{u}_{2}(t-\tau x,1),	\\
	&u_{0}(t)=(1-\mu)\mbf{u}_{1}(t,1)+\mu\mbs{\phi}_{1}(t,1)
\end{align*}
%Introducing $\mbf{u}_{1}(t)=\mbf{u}(t)$, $\mbf{u}_{2}=\mbf{u}(t)$, $u_{0}(t)=(1-\mu)\mbf{u}(t,1)+\mu\mbf{u}(t-\tau,1)$, $\mbs{\phi}_{1}(t,x)=\mbf{u}(t-\tau x,1)$, and $\mbs{\phi}_{2}(t,x)=\mbf{u}_{t}(t-\tau x,1)$
this system can be equivalently represented as
%\begin{align*}
%	u_{0}(t)&=(1-\mu)\mbf{u}(t,1)+\mu\mbf{u}(t-\tau,1),	\\
%%	\mbf{u}_{1}(t,x)&=\mbf{u}(t,x),	\hspace*{2.0cm}
%%	\mbf{u}_{2}(t,x)=\mbf{u}_{t}(t,x),	\\
%	\mbs{\phi}_{1}(t,x)&=\mbf{u}(t-\tau x,1), \hspace*{1.2cm}
%	\mbs{\phi}_{2}(t,x)=\mbf{u}_{t}(t-\tau x,1),
%\end{align*}
%this system can be equivalently represented as
\begin{align*}
	\dot{u}_{0}(t)&=\partial_{x}\mbf{u}_{1}(t,1) \\
	\partial_{t}\mbf{u}_{1}(t,x)&=\mbf{u}_{2}(t,x),	\hspace*{1.5cm}
	\partial_{t}\mbf{u}_{2}(t,x)=\partial_{x}^2\mbf{u}_{1}(t,x),	\\
	\partial_{t}\mbs{\phi}_{1}(t,x)&=-\frac{1}{\tau}\partial_{x}\mbs{\phi}_{1}(t,x),	\hspace*{0.6cm}
	\partial_{t}\mbs{\phi}_{2}(t,x)=-\frac{1}{\tau}\partial_{x}\mbs{\phi}_{2}(t,x),	\\
	\mbf{u}_{1}(t,0)&=0,	\hspace*{2.5cm} \mbf{u}_{2}(t,0)=0,	\\
	\mbs{\phi}_{1}(t,0)&=\mbf{u}_{1}(t,1), \hspace*{1.55cm}
	\mbs{\phi}_{2}(t,0)=\mbf{u}_{2}(t,1), \\
	u_{0}(t)&=(1-\mu)\mbf{u}_{1}(t,1)+\mu\mbs{\phi}_{1}(t,1), \\
	\partial_{x}\mbf{u}_{1}(t,1)&=(1-\mu)\mbf{u}_{2}(t,1)+\mu\mbs{\phi}_{2}(t,1).	
\end{align*}
This ODE-PDE system can be readily declared in PIETOOLS, and converted to a PIE.

In~\cite{xu2006Boundary_stabilization_wave_eq}, the PDE~\eqref{eq:Example3} was proven to be stable independent of delay if $\mu<\frac{1}{2}$, and unstable independent of delay if $\mu>\frac{1}{2}$. We examine the ability of the proposed algorithm to expand upon this result by determining bounds on the rate of decay for several values of $\mu$ and $\tau$. First, fixing $\tau=1$, we note that stability can be numerically verified with PIETOOLS for any $\mu\leq 0.5-10^{-3}$. Next, fixing $\mu=0.4$ and performing bisection on the value of $\alpha$, exponential decay rates can be computed as illustrated in Tab.~\ref{tab:tau}. For each test, the operator $\mcl{P}$ in the LPI~\eqref{eq:stability_LPI} was parameterized by a symmetric positive semidefinite matrix $P\in\R^{73\times 73}$.
\begin{table}[h!]
	\setlength{\tabcolsep}{5pt}
		\begin{tabular}{l|ccccccc}
			$\tau$ 	 & 0.125    &  0.25   & 0.5    & 1.0    & 2.0    & 4.0 & 8.0	\\\hline
			$\alpha$ & 0.2023 & 0.1908  & 0.1513  & 0.1333 & 0.1060  & 0.0701   & 0.0135 \\[-0.6em]
		\end{tabular}
		\caption{
			Decay rates $\alpha$ for which exponential stability of System~\eqref{eq:Example3}\\[-0.1em] 
			with $\mu=0.4$ was verified using Thm.~\ref{thm:stability_as_LPI} with $\epsilon=10^{-3}$.
		}\label{tab:alpha}
	\vspace*{-0.5cm}
\end{table}

\section{Conclusion}

In this paper, an LMI-based method for verifying stability of coupled, linear, delayed, PDE systems in a single spatial dimension was presented. In particular, it was shown that for any suitably well-posed PDE with delay, there exists an associated (1D or 2D) PIE with a corresponding bijective map from solution of the delayed PDE to that of the PIE. The PIE representation was then used to propose a stability test for the delayed PDE. This stability test was posed as a linear operator inequality expressed using PI operator variables (an LPI). Finally, the PIETOOLS software package was used to convert the LPI to a semidefinite programming problem and the resulting stability conditions were applied to several common examples of delayed PDEs. While these results only apply to fixed, constant delays, an extension to time-varying delays may be possible using PDE representations such as in~\cite{nicaise2009stability_waveeq_heateq_BCdelay}.

%The results of this paper can be generalized to systems with multiple delayed signals and distributed delays, and future work may extend the methodology to PDEs with time-varying delay.
	
	\vspace*{-0.1cm}

	\bibliographystyle{IEEEtran}
	\bibliography{bibfile}
	
\clearpage
	
\begin{appendices}

\section{Feedback Interconnection of Partial Integral Equations}\label{appx:PIE_interconnection}

In order to derive the PIE representation of a PDE with delay, in Section~\eqref{appx:PIEmap}, we will first represent this delayed PDE as the feedback interconnection of a 1D PDE and a 2D PDE. Separately deriving a PIE representation of both the 1D PDE and 2D PDE subsystems, we can then construct a PIE representation of the original system by taking the feedback interconnection of the two PIE subsystems.

In this section, we prove that the feedback interconnection of PIEs can indeed be represented as a PIE as well. This result was already proven in~\cite{shivakumar2022GPDE_Arxiv}, for the case of finite-dimensional interconnection signals, and will be extended here to include infinite-dimensional interconnection signals. To start, in the following subsection, we first suitable classes of PI operators to parameterize our PIEs. Although more general classes of PI operators have already been defined in other papers, the classes in the following subsection are sufficient for the purposes of this paper. In the next subsection, we then show how the interconnection of two suitable PIEs can be represented as a PIE as well.

\subsection{Algebras of PI Operators in 2D}

Partial integral (PI) operators are bounded, linear operators, parameterized by square integrable functions. In 1D, the standard class of PI operators is that of 3-PI operators, which we define as follows
\begin{defn}[3-PI Operators ($\Pi_{3}$)]
	For $m,n\in\N$, define
	{\begin{align*}
			\mcl{N}_{3}^{m\times n}[\Omega_{a}^{b}]\!:=\!L_{\infty}^{m\times n}[\Omega_{a}^{b}] \!\times\! L_2^{m\times n}[\Omega_{a}^{b}\!\times\!\Omega_{a}^{b}] \times L_2^{m\times n}[\Omega_{a}^{b}\!\times\!\Omega_{a}^{b}].
	\end{align*}}%
	Then, for given parameters $N:=\{N_0,N_1,N_2\}$, we define the associated 3-PI operator for $\mbf{u}\in L_2^{n}[\Omega_{a}^{b}]$ as% \vspace*{-0.2cm}
	{\begin{align*}
			\small
			\bl(\mcl{P}[N]\mbf{u}\br)(x)&= N_0(x)\mbf{u}(x) + \int_{a}^{x}\! N_1(x,\theta)\mbf{u}(\theta)d\theta 	\\[-0.4em]
			&\qquad+ \int_{x}^{b}\! N_2(x,\theta)\mbf{u}(\theta)d\theta, \qquad x\in \Omega_{a}^{b}.\\[-1.4em]
	\end{align*}}%
	%We denote the set of 3-PI operators as $\Pi_{3}$, so that $\mcl{P}\in\Pi_{3}^{m\times n}$ if and only if $\mcl{P}=\mcl{P}[N]$ for some $N\in\mcl{N}_{3}^{m\times n}$.	
\end{defn}
Since we are interested in coupled ODE-PDE systems, we need to be able to map between finite-dimensional ODE states $u\in\R^{n_{0}}$ and infinite-dimensional PDE states $\mbf{u}_{1}\in L_2^{n_{1}}[\Omega_{a}^{b}]$. For this purpose, we define a class of 4-PI operators, acting on the function space $\text{Z}_{1}^{\enn{}}[\Omega_{a}^{b}]:=\R^{n_{0}}\times L_2^{n_{1}}[\Omega_{a}^{b}]$ for $\enn{}=(n_{1},n_{1})\in\N^{2}$.

\begin{defn}[4-PI Operators ($\Pi_{4}$)]
	For given\\ $\text{m}:=(m_0,m_1)\in\N^{2}$, $\text{n}:=(n_0,n_1)\in\N^2$, define
	\begin{align*}
		\mcl{N}_{4}^{\text{m}\times\text{n}}[\Omega_{a}^{b}]:=\left[\!
		\begin{array}{ll}
			\R^{m_0\times n_0}&L_2^{m_0\times n_1}[\Omega_{a}^{b}]\\
			L_2^{m_1\times n_0}&\mcl{N}_3^{m_1\times n_1}[\Omega_{a}^{b}].
		\end{array}\!
		\right].
	\end{align*}
	Then, for given parameters $B=\smallbmat{B_{00}&B_{01}\\B_{10}&B_{11}}\in\mcl{N}_{4}^{\text{m}\times\text{n}}[\Omega_{a}^{b}]$, we define the associated 4-PI operator for $\mbf{u}=\srbmat{u_0\\\mbf{u}_1}\in\srbmat{\R^{n_0}\\L_2^{n_1}}$ as% \vspace*{-0.2cm}
	\begin{align*}
		\bl(\mcl{P}[B]\mbf{u}\br)(x)&=\!\left[\!
		\begin{array}{rcl}
			B_{00}u_0 &\!+\!\! &\int_{a}^{b}B_{01}(x)\mbf{u}_1(x)dx \\
			B_{10}(x)u_0 &\!+\!\!& \bl(\mcl{P}[B_{11}]\mbf{u}_1)(x)
		\end{array}\!
		\right],	&	\!x&\in\Omega_{a}^{b}.
	\end{align*}
	%We denote the set of 4-PI operators as $\Pi_{4}^{\text{n}\times\text{m}}$, so that $\mcl{P}\in \Pi_{4}^{\text{n}\times\text{m}}$ if and only if $\mcl{P}=\mcl{P}[B]$ for some $B\in\mcl{N}_{4}^{\text{n}\times\text{m}}$.
\end{defn}

Finally, since our delayed system will actually yield 2D PDEs, we will need operators acting on 2D function spaces as well. In particular, we define the space $\text{Z}_{2}^{\text{n}}[\Omega_{ac}^{bd}]=L_2^{n_0}[\Omega_{c}^{d}]\times L_2^{n_1}[\Omega_{ac}^{bd}]$ for $\enn{}=(n_{0},n_{1})\in\N^{2}$, and for $\text{n}=(\text{n}_1,\text{n}_2)\in\N^4$, we define $\text{Z}_{12}^{\text{n}}[\Omega_{ac}^{bd}]:=\text{Z}_{1}^{\text{n}_1}[\Omega_{a}^{b}]\!\times \text{Z}_{2}^{\text{n}_{2}}[\Omega_{ac}^{bd}]$. Through some abuse of notation, we will allow 4-PI operators to act on functions in $\text{Z}_{2}:=\smallbmat{L_2[\Omega_{c}^{d}]\\L_2[\Omega_{ac}^{bd}]}$ as well, assuming them to act as multipliers along $y\in\Omega_{c}^{d}$. Then, we define a restricted class of 2D PI operators as follows.

\begin{defn}[PI Operators on 2D ($\Pi_{12}$)]
	For given $\text{m}:=(\text{m}_1,\text{m}_2)\in\N^4$ and $\text{n}:=(\text{n}_1,\text{n}_2)\in\N^4$, define
	\begin{align*}
		\mcl{N}_{12}^{\text{m}\times\text{n}}[\Omega_{ac}^{bd}]:=\left[\!
		\begin{array}{ll}
			\mcl{N}_{4}^{\text{m}_{1}\times\text{n}_{1}}[\Omega_{a}^{b}] & \mcl{N}_{4}^{\text{m}_{1}\times\text{n}_{2}}[\Omega_{a}^{b}] \\
			\mcl{N}_{4}^{\text{m}_{2}\times\text{n}_{1}}[\Omega_{a}^{b}] & (\mcl{N}_{4}^{\text{m}_{2}\times\text{n}_{2}}[\Omega_{a}^{b}])^{3}
		\end{array}\!
		\right].
	\end{align*}
	Then, for given parameters $B=\smallbmat{B_{11}&B_{12}\\B_{21}&N}\in\mcl{N}_{12}^{\text{m}\times\text{n}}[\Omega_{ac}^{bd}]$, define the associated PI operator for $\mbf{u}=\srbmat{\mbf{u}_1\\\mbf{u}_2}\in\srbmat{\text{Z}_{1}^{\enn{1}}[\Omega_{a}^{b}]\\\text{Z}_{2}^{\enn{2}}[\Omega_{ac}^{bd}]}$ and $(x,y)\in\Omega_{ac}^{bd}$ as% \vspace*{-0.2cm}
	\begin{align*}
		\bl(\mcl{P}[B]\mbf{u}\br)(x,y)&\!=\!\left[\!\!{\small
			\begin{array}{rl}
				\bl(\mcl{P}[B_{11}]\mbf{u}_1\br)(x) &\!+ \int_{c}^{d}\bl(\mcl{P}[B_{12}]\mbf{u}_2\br)(x,y)dy \\
				\bl(\mcl{P}[B_{21}]\mbf{u}_1\br)(x,y) &\!+ \bl(\mcl{P}[N]\mbf{u}_{2}\br)(x,y)
		\end{array}}\!
		\right]\!,
	\end{align*}
	where for $N=\{N_0,N_1,N_2\}\in(\mcl{N}_{4}^{\text{m}_2\times\text{n}_2}[\Omega_{a}^{b}])^3$,
	{\small\begin{align*}
			\bl(\mcl{P}[N]\mbf{u}_{2}\br)(x,y)
			&=\bl(\mcl{P}[N_{0}]\mbf{u}_{2}\br)(x,y) + \int_{c}^{y}\bl(\mcl{P}[N_{1}]\mbf{u}_{2}\br)(x,\nu)d\nu \\[-0.4em]
			&\qquad+ \int_{y}^{d}\bl(\mcl{P}[N_{2}]\mbf{u}_{2}\br)(x,\nu)d\nu.
	\end{align*}}%
	%	We denote the set of 2D PI operators as $\Pi_{12}^{\text{n}\times\text{m}}$, so that $\mcl{P}\in \Pi_{12}^{\text{n}\times\text{m}}$ if and only if $\mcl{P}=\mcl{P}[B]$ for some $B\in\mcl{N}_{12}^{\text{n}\times\text{m}}$.
\end{defn}

Throughout this paper, we will use $\Pi$ to denote the general class of PI operators, writing $\mcl{B}\in\Pi^{\text{m}\times\text{n}}$ if there exist parameters $B\in\mcl{N}_{12}^{\text{m}\times\text{n}}$ such that $\mcl{B}=\mcl{P}[B]$. We will make extensive use of the following properties of PI operators.

\begin{enumerate}
	\item
	The sum $\mcl{Q}+\mcl{R}=\mcl{P}\in\Pi^{\text{m}\times\text{n}}$ of two PI operators $\mcl{Q},\mcl{R}\in\Pi^{\text{m}\times\text{n}}$ is a PI operator. 
	
	\item
	The composition $\mcl{Q}\circ\mcl{R}=\mcl{P}\in\Pi^{\text{m}\times\text{n}}$ of two PI operators $\mcl{Q}\in \Pi^{\text{m}\times\text{p}}$, $\mcl{R}\in \Pi^{\text{p}\times\text{n}}$ is a PI operator. 
	
	%	\item
	%	The composition of a differential operator with a suitable PI operator is a PI operator.
	%	
	%	\item
	%	The composition of a Dirac operator with a suitable PI operator is a PI operator.
	
	\item
	The adjoint $\mcl{P}^*\in\Pi^{\text{n}\times\text{m}}$ of a PI operator $\mcl{P}\in\Pi^{\text{m}\times\text{n}}$ is a PI operator. 
	
\end{enumerate}

We refer to~\cite{shivakumar2022GPDE_Arxiv} (1D) and~\cite{jagt2021PIEArxiv} (2D) for more details on PI operators, including explicit definitions of the parameters associated to operations such as addition and multiplication.

%Here we define the adjoint of a PI operator $\mcl{P}\in\Pi_{4}^{\text{n}\times\text{m}}$, as the unique operator $\mcl{P}^*\in\Pi_{4}^{\text{m}\times\text{n}}$ that satisfies
%\begin{align*}
%	\ip{\mbf{v}}{\mcl{P}\mbf{u}}_{\text{Z}_{1}^{\text{n}}}=\ip{\mcl{P}^*\mbf{v}}{\mbf{u}}_{\text{Z}_{1}^{\text{m}}}
%\end{align*}
%for any $\mbf{u}\in\text{Z}_{1}^{\text{m}}$ and $\mbf{v}\in\text{Z}_{1}^{\text{n}}$, where $\text{n},\text{m}\in\N^2$.

%Here we say that an operator $\mcl{P}\in\Pi_{12}^{\enn{}\times\enn{}}$ is positive semidefinite or (strictly) positive definite, denoted as $\mcl{P}\geq0$ and $\mcl{P}>0$, if for any $\mbf{v}\in\text{Z}_{12}^{\enn{}}$ with $\mbf{v}\neq\mbf{0}$ and some $\epsilon>0$,
%\begin{align*}
%	\ip{\mbf{v}}{\mcl{P}\mbf{v}}_{\text{Z}^{\enn{}}}&\geq0,	&	&\text{or respectively,}
%	&
%	\ip{\mbf{v}}{\mbf{v}}_{\text{Z}^{\enn{}}}\geq \epsilon\ip{\mbf{v}}{\mbf{v}}_{\text{Z}^{\text{n}}}.
%\end{align*}

\subsection{A Feedback Interconnection of PIEs}

Having defined a sufficiently general class of PI operators for the purposes of this paper, consider now a PIE of the form
\begin{align}\label{eq:standard_PIE_full}
	\srbmat{\mcl{T}_{r}\mbf{r}_{t}(t)+\mcl{T}_{w}\mbf{w}_{t}(t)+\mcl{T}\mbf{v}_{t}(t)\\ \mbf{z}(t)\\\mbf{q}(t)}\!=\!
	\left[\! \arraycolsep=2.5pt
	\begin{array}{lll}
		\mcl{A}&\mcl{B}_{w}&\mcl{B}_{r} \\
		\mcl{C}_{z}&\mcl{D}_{zw}&\mcl{D}_{zr} \\
		\mcl{C}_{q}&\mcl{D}_{qw}&\mcl{D}_{qr} \\
	\end{array}\!
	\right]\!
	\srbmat{\mbf{v}(t)\\\mbf{w}(t)\\\mbf{r}(t)},
\end{align}
where at each time $t\geq 0$, $\mbf{v}(t)\in\text{Z}_{12}^{\enn{u}}$, $\mbf{w}(t)\in \text{Z}_{12}^{\enn{w}}$, $\mbf{r}(t)\in \text{Z}_{12}^{\enn{r}}$, $\mbf{z}(t)\in \text{Z}_{12}^{\enn{z}}$ and $\mbf{q}(t)\in \text{Z}_{12}^{\enn{q}}$, for some $\enn{u},\enn{w},\enn{z},\enn{r},\enn{q}\in\N^4$. We collect the PI operators defining the PIE in
\begin{align}\label{eq:standard_PIE_full_ops}
	\mbf{G}_{\text{pie}}\!=\!\!
	\left[\!{\small \arraycolsep = 2.4pt
		\begin{array}{lll}
			\mcl{T}&\mcl{T}_w &\mcl{T}_{r} \\
			\mcl{A}&\mcl{B}_{w} & \mcl{B}_{r} \\
			\mcl{C}_{z} &\mcl{D}_{zw} & \mcl{D}_{zr} \\
			\mcl{C}_{q} &\mcl{D}_{qw} & \mcl{D}_{qr}
	\end{array}}\!
	\right]
	\in
	\left[\!{\small
		\begin{array}{lll}
			\Pi^{\enn{u}\times\enn{u}} & \Pi^{\enn{u}\times\enn{w}} & \Pi^{\enn{u}\times\enn{r}} \\
			\Pi^{\enn{u}\times\enn{u}} & \Pi^{\enn{u}\times\enn{w}} & \Pi^{\enn{u}\times\enn{r}}\\
			\Pi^{\enn{z}\times\enn{u}} & \Pi^{\enn{z}\times\enn{w}} & \Pi^{\enn{z}\times\enn{r}}	\\
			\Pi^{\enn{q}\times\enn{u}} & \Pi^{\enn{q}\times\enn{w}} & \Pi^{\enn{q}\times\enn{r}}
	\end{array}}\!
	\right].
\end{align}
If the PIE involves only a single output and a single output signal, so that $\enn{q}=\enn{r}=0$, we will exclude the PI operators associated to $\mbf{q},\mbf{r}$, writing $\mbf{G}_{\text{pie}}=\{\mcl{T},\mcl{T}_{w},\mcl{A},\mcl{B}_{w},\mcl{C}_{z},\mcl{D}_{zw}\}$. If the PIE describes an autonomous system, so that also $\enn{w}=\enn{z}=0$, we will simply write $\mbf{G}_{\text{pie}}=\{\mcl{T},\mcl{A}\}$.

\begin{defn}[Solution to the PIE]
	For given input signals $(\mbf{w},\mbf{r})$ and given initial conditions $\mbf{v}_{0}\in \text{Z}_{12}^{\enn{u}}$, we say that $(\mbf{v},\mbf{z},\mbf{q})$ is a solution to the PIE defined by $\mbf{G}_{\text{pie}}$ if $\mbf{v}$ is Frech\'et differentiable, $\mbf{v}(0)=\mbf{v}_{0}$, and for all $t\geq0$, $(\mbf{v}(t),(\mbf{z}(t),\mbf{q}(t)),(\mbf{w}(t),\mbf{r}(t)))$ satisfies Eqn.~\eqref{eq:standard_PIE_full} with operators defined as in~\eqref{eq:standard_PIE_full_ops}.
\end{defn}

Using the composition and addition rules of PI operators, it is easy to show that the interconnection of two suitable PIEs can also be represented as a PIE.

\begin{prop}[Interconnection of PIEs]\label{prop:PIE_interconnection_full}
	Let 
	\begin{align*}	
		\mbf{G}_{\text{pie},1}&=
		\left[\!{\small
			\arraycolsep=2.5pt
			\begin{array}{lll}
				\mcl{T}_{1} & \mcl{T}_{1w} & \mcl{T}_{1r}\\
				\mcl{A}_{1} & \mcl{B}_{1w} & \mcl{B}_{1r} \\
				\mcl{C}_{z1} & \mcl{D}_{zw} & \mcl{D}_{zr} \\
				\mcl{C}_{q1} & \mcl{D}_{qw} & 0
		\end{array}}\!
		\right],
		\quad\text{and}\quad
		\mbf{G}_{\text{pie},2}=
		\left[\!{\small
			\arraycolsep=2.5pt
			\begin{array}{ll}
				\mcl{T}_{2} & \mcl{T}_{2q} \\
				\mcl{A}_{2} & \mcl{B}_{2q} \\
				\mcl{C}_{r2} & \mcl{D}_{rq}
		\end{array}}\!
		\right],
	\end{align*}
	define two PIEs. Define the associated PIE interconnection as
	$\mbf{G}_{\text{pie}}\!=\!\{\mcl{T},\mcl{T}_{w},\mcl{A},\mcl{B}_{w},\mcl{C}_{z},\mcl{D}_{zw}\}\!=\!\mcl{L}_{\text{pie}\times\text{pie}}(\mbf{G}_{\text{pie},1},\!\mbf{G}_{\text{pie},2})$, where 
	{\small
		\begin{align*}
			\mcl{T}&\!=\!\left[\!
			\begin{array}{ll}
				\mcl{T}_{1}+\mcl{T}_{1r}\mcl{D}_{rq}\mcl{C}_{q1} &\ \mcl{T}_{1r}\mcl{C}_{r2} \\
				\mcl{T}_{2q}\mcl{C}_{q1} &\ \mcl{T}_{2}
			\end{array}\!
			\right],	\hspace*{0.4cm}
			\mcl{T}_{w}\!=\!\left[\!
			\begin{array}{l}
				\mcl{T}_{1w} + \mcl{T}_{1r}\mcl{D}_{rq}\mcl{D}_{qw} \\
				\mcl{T}_{2q}\mcl{D}_{qw}
			\end{array}\!
			\right],	\\
			\mcl{A}&\!=\!\left[\!
			\begin{array}{ll}
				\mcl{A}_{1}+\mcl{B}_{1r}\mcl{D}_{rq}\mcl{C}_{q1} &\ \mcl{B}_{1r}\mcl{C}_{q2} \\
				\mcl{B}_{2q}\mcl{C}_{q1} &\ \mcl{A}_{2}
			\end{array}\!
			\right],	
			\hspace*{0.2cm}\mcl{B}_{w}\!=\!\left[\!
			\begin{array}{l}
				\mcl{B}_{1w} + \mcl{B}_{1r}\mcl{D}_{rq}\mcl{D}_{qw} \\
				\mcl{B}_{2q}\mcl{D}_{qw}
			\end{array}\!
			\right],	\\
			\mcl{C}_{z}&\!=\!\left[\!
			\begin{array}{ll}
				\mcl{C}_{z1}+\mcl{D}_{zr}\mcl{D}_{rq}\mcl{C}_{q1} &\ \mcl{D}_{zr}\mcl{C}_{r2}
			\end{array}\!
			\right],	\hspace*{0.2cm}
			\mcl{D}_{zw}=\mcl{D}_{zw}+\mcl{D}_{zr}\mcl{D}_{rq}\mcl{D}_{qw}.
		\end{align*}
	}
	Then, $(\smallbmat{\mbf{v}\\\mbs{\psi}},\mbf{z})$ solves the PIE defined by $\mbf{G}_{\text{pie}}$ with initial conditions $\smallbmat{\mbf{v}_{0}\\\mbs{\psi}_{0}}$ and input $\mbf{w}$ if and only if $(\mbf{v},\mbf{z},\mbf{q})$ and $(\mbs{\psi},\mbf{r})$ solve the PIEs defined by $\mbf{G}_{\text{pie},1}$ and $\mbf{G}_{\text{pie},2}$ with initial conditions $\mbf{v}_{0}$ and $\mbs{\psi}_{0}$ and inputs $(\mbf{w},\mbf{r})$ and $\mbf{q}$, respectively, where
	\begin{align}\label{eq:qr_signals}
		\mbf{q}(t)&=\mcl{C}_{q1}\mbf{v}(t) + \mcl{D}_{qw}\mbf{w}(t), \\		\mbf{r}(t)&=\mcl{C}_{r2}\mbs{\psi}(t)+\mcl{D}_{rq}\bl(\mcl{C}_{q1}\mbf{v}(t)+\mcl{D}_{qw}\mbf{w}(t)\br).	\notag
	\end{align}
	
\end{prop}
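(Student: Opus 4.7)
The plan is to mirror the proof of Prop.~\ref{prop:PIE_interconnection}, the only new features being the external input $\mbf{w}$, the regulated output $\mbf{z}$, and the direct feedthrough $\mcl{D}_{rq}$ in the interconnection. First I would observe that since the $(\mbf{q},\mbf{r})$-feedthrough of $\mbf{G}_{\text{pie},1}$ is zero (the $0$ in the lower right entry), the signals $\mbf{q}$ and $\mbf{r}$ can be solved algebraically in terms of $(\mbf{v},\mbs{\psi},\mbf{w})$ as in~\eqref{eq:qr_signals} with no well-posedness issue: $\mbf{q}$ depends only on $(\mbf{v},\mbf{w})$, and $\mbf{r}$ is then determined by $\mbs{\psi}$ and $\mbf{q}$. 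Since $\Pi$ is closed under sum and composition, the operators appearing in~\eqref{eq:qr_signals} and in the definition of $\mbf{G}_{\text{pie}}$ are all PI operators, so there is no question of staying within the class.

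For the forward implication, I would take differentiable $\mbf{v},\mbs{\psi}$ with $\mbf{v}(0)=\mbf{v}_{0}$ and $\mbs{\psi}(0)=\mbs{\psi}_{0}$, define $\mbf{q},\mbf{r}$ by~\eqref{eq:qr_signals}, and assume $(\mbf{v},\mbf{z},\mbf{q})$ and $(\mbs{\psi},\mbf{r})$ solve $\mbf{G}_{\text{pie},1}$ and $\mbf{G}_{\text{pie},2}$, respectively. The output equations of the two subsystems then coincide with~\eqref{eq:qr_signals} by construction (using $\mcl{D}_{qr}=0$), so only the dynamic equations need to be checked. Substituting the expressions for $\mbf{q}$ and $\mbf{r}$ into the dynamic equation for $\mbf{v}$ gives
\begin{align*}
\mcl{T}_{1}\mbf{v}_{t}+\mcl{T}_{1w}\mbf{w}_{t}+\mcl{T}_{1r}\bl(\mcl{C}_{r2}\mbs{\psi}_{t}+\mcl{D}_{rq}(\mcl{C}_{q1}\mbf{v}_{t}+\mcl{D}_{qw}\mbf{w}_{t})\br)
\end{align*}
on the left-hand side, which regrouping yields exactly the first block of $\mcl{T}\smallbmat{\mbf{v}\\\mbs{\psi}}_{t}+\mcl{T}_{w}\mbf{w}_{t}$ from the statement; the corresponding regrouping of the right-hand side matches $\mcl{A}\smallbmat{\mbf{v}\\\mbs{\psi}}+\mcl{B}_{w}\mbf{w}$. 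The dynamic equation for $\mbs{\psi}$, with $\mbf{q}$ substituted, provides the second block, and the $\mbf{z}$-equation of $\mbf{G}_{\text{pie},1}$ with the same substitution yields $\mbf{z}=\mcl{C}_{z}\smallbmat{\mbf{v}\\\mbs{\psi}}+\mcl{D}_{zw}\mbf{w}$.

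For the converse, I would take a solution $(\smallbmat{\mbf{v}\\\mbs{\psi}},\mbf{z})$ of the PIE defined by $\mbf{G}_{\text{pie}}$ with input $\mbf{w}$ and initial state $\smallbmat{\mbf{v}_{0}\\\mbs{\psi}_{0}}$, define $\mbf{q},\mbf{r}$ by~\eqref{eq:qr_signals}, and read off the two block rows of the composed dynamic equation to recover the dynamic equations of $\mbf{G}_{\text{pie},1}$ and $\mbf{G}_{\text{pie},2}$; the output equations of both subsystems then hold by the very definition of $\mbf{q},\mbf{r}$, and the initial conditions $\mbf{v}(0)=\mbf{v}_{0}$, $\mbs{\psi}(0)=\mbs{\psi}_{0}$ are inherited directly.

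The only real obstacle is bookkeeping: the cross-terms generated by the feedthrough $\mcl{D}_{rq}$ must be tracked through every block of $\mcl{T}$, $\mcl{T}_{w}$, $\mcl{A}$, $\mcl{B}_{w}$, $\mcl{C}_{z}$, and $\mcl{D}_{zw}$, but no new analysis beyond the *-algebra closure of $\Pi$ is required. In particular, no differentiability issues for $\mbf{q},\mbf{r}$ arise because $\mbf{q}$ and $\mbf{r}$ inherit Fr\'echet differentiability in $t$ from $\mbf{v},\mbs{\psi},\mbf{w}$ via the bounded PI operators in~\eqref{eq:qr_signals}.
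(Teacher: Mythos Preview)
Your proposal is correct and follows essentially the same approach as the paper's proof: both directions are handled by substituting the algebraic expressions~\eqref{eq:qr_signals} for $\mbf{q}$ and $\mbf{r}$ into the subsystem equations and regrouping terms block by block. Your explicit remark that the zero $(\mbf{q},\mbf{r})$-feedthrough in $\mbf{G}_{\text{pie},1}$ makes the interconnection algebraically well-posed, and that $\mbf{q},\mbf{r}$ inherit Fr\'echet differentiability, are useful clarifications that the paper leaves implicit.
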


%\begin{proof}
%	The result follows immediately by substituting the expressions for $\mbf{q}$ and $\mbf{r}$ from~\eqref{eq:qr_signals} into the PIEs defined by $\mbf{G}_{\text{pie},2}$ and $\mbf{G}_{\text{pie},1}$, respectively. A full proof is given in the arXiv version of this paper~\cite{jagt2023PIE_DPDE_Arxiv}.
%\end{proof}
\begin{proof}
	A proof is given in Block~\ref{block:PIE_interconnection_proof}.
\end{proof}

\begin{block*}	
	Let inputs $(\mbf{w},\mbf{r})$ and $\mbf{q}$ be given, and such that $(\mbf{v},\mbf{z},\mbf{q})$ and $(\mbs{\phi},\mbf{r})$ solve the PIEs defined by $\mbf{G}_{\text{pie},1}$ and $\mbf{G}_{\text{pie},2}$ with initial conditions $\mbf{v}_{0}$ and $\mbs{\psi}_{0}$, respectively. Then, $\smallbmat{\mbf{v}(0)\\\mbs{\psi}(0)}=\smallbmat{\mbf{v}_{0}\\\mbs{\psi}_{0}}$, and therefore $\smallbmat{\mbf{v}(0)\\\mbs{\psi}(0)}$ satisfies the initial conditions defined by $\smallbmat{\mbf{v}_{0}\\\mbs{\psi}_{0}}$. In addition, the outputs $\mbf{q}$ to $\mbf{G}_{\text{pie}}$ and $\mbf{r}$ to $\mbf{G}_{\text{pie},2}$ will satisfy Eqn.~\eqref{eq:qr_signals}, by definition of the PIEs defined by $\mbf{G}_{\text{pie},1}$ and $\mbf{G}_{\text{pie},2}$. Finally, by definition of the operators $\mbf{G}_{\text{pie}}$, at any time $t\geq 0$, the signals $\bl(\smallbmat{\mbf{v}\\\mbs{\psi}},\mbf{z}\br)$ will satisfy
	\begin{flalign}\label{eq:PIE_interconnection_dynamics}
		0&=
		\srbmat{\mcl{T}\smallbmat{\mbf{v}_{t}(t)\\ \mbs{\psi}_{t}(t)}\\ \mbf{z}(t)} -
		\left[\!
		\begin{array}{ll}
			\mcl{A}&\mcl{B}_{w}\\ \mcl{C}_{z}&\mcl{D}_{zw}
		\end{array}\!
		\right]
		\srbmat{\smallbmat{\mbf{v}(t)\\ \mbs{\psi}(t)}\\\mbf{w}(t)} \\
		&=
		\left[\!
		\begin{array}{rcrcr}
			(\mcl{T}_{1}+\mcl{T}_{1r}\mcl{D}_{rq})\mbf{v}_{t}(t) &\!\!\!\!+\!\!\!\!&  \mcl{T}_{1r}\mcl{C}_{r2}\mbs{\psi}_{t}(t) &\!\!\!\!+\!\!\!\!& (\mcl{T}_{1w}+\mcl{T}_{1r}\mcl{D}_{rq}\mcl{D}_{qw})\mbf{w}_{t}(t) \\
			\mcl{T}_{2q}\mcl{C}_{q1}\mbf{v}_{t}(t) &\!\!\!\!+\!\!\!\!&  \mcl{T}_{2}\mbs{\psi}_{t}(t) &\!\!\!\!+\!\!\!\!&  \mcl{T}_{2q}\mcl{D}_{qw}\mbf{w}_{t}(t) \\
			\mbf{z}(t)
		\end{array}\!
		\right] \nonumber\\
		&\hspace*{2.0cm}-	
		\left[\!
		\begin{array}{rcrcr}
			(\mcl{A}_{1}+\mcl{B}_{1r}\mcl{D}_{rq})\mbf{v}(t) &\!\!\!\!+\!\!\!\!&  \mcl{B}_{1r}\mcl{C}_{r2}\mbs{\psi}(t) &\!\!\!\!+\!\!\!\!& (\mcl{B}_{1w}+\mcl{B}_{1r}\mcl{D}_{rq}\mcl{D}_{qw})\mbf{w}(t) \\
			\mcl{B}_{2q}\mcl{C}_{q1}\mbf{v}(t) &\!\!\!\!+\!\!\!\!&  \mcl{A}_{2}\mbs{\psi}(t) &\!\!\!\!+\!\!\!\!&  \mcl{B}_{2q}\mcl{D}_{qw}\mbf{w}(t) \\
			(\mcl{C}_{z1}+\mcl{D}_{zr}\mcl{D}_{rq})\mbf{v}(t)
			(\mcl{C}_{z1}+\mcl{D}_{zr}\mcl{D}_{rq})\mbf{v}(t) &\!\!\!\!+\!\!\!\!&  \mcl{D}_{zr}\mcl{C}_{r2}\mbs{\psi}(t) &\!\!\!\!+\!\!\!\!& (\mcl{D}_{zw}+\mcl{B}_{zr}\mcl{D}_{rq}\mcl{D}_{qw})\mbf{w}(t) 
		\end{array}\!
		\right] \nonumber\\
		&=
		\left[\!
		\begin{array}{lll}
			\mcl{T}_{1}\mbf{v}_{t}(t) ~+\!\!\!& \mcl{T}_{1w}\mbf{w}_{t}(t) ~+\!\!\!\!\!& \mcl{T}_{1r}\mbf{r}_{t}(t) \\
			\mcl{T}_{2}\mbs{\phi}_{t}(t) ~+\!\!\!\!\!& \mcl{T}_{2q}\mbf{q}_{t}(t) \\
			\mbf{z}(t)
		\end{array}\!
		\right]
		-	
		\left[\!
		\begin{array}{lll}
			\mcl{A}_{1}\mbf{v}(t) ~+\!\!\!\!\!& \mcl{B}_{1w}\mbf{w}(t) ~+\!\!\!\!\!& \mcl{B}_{1r}\mbf{r}(t) \\
			\mcl{A}_{2}\mbs{\psi}(t) ~+\!\!\!\!\!&
			\mcl{B}_{2q}\mbf{q}(t) \\
			\mcl{C}_{z1}\mbf{v}(t) ~+\!\!\!\!\!& \mcl{D}_{zw}\mbf{w}(t) ~+\!\!\!\!\!& \mcl{D}_{zr}\mbf{r}(t)
		\end{array}\!	 
		\right].
	\end{flalign}
	Hence, at any time $t\geq 0$, $\bl(\smallbmat{\mbf{v}(t)\\\mbs{\psi}(t)},\mbf{z}(t),\mbf{w}(t)\br)$ satisfies the PIE defined by $\mbf{G}_{\text{pie}}$. It follows that, $\bl(\smallbmat{\mbf{v}\\\mbs{\psi}},\mbf{z}\br)$ solves the PIE defined by $\mbf{G}_{\text{pie}}$, with input $\mbf{w}$, and initial conditions $\smallbmat{\mbf{v}_{0}\\\mbs{\psi}_{0}}$. 
	
	Conversely, let now an input $\mbf{w}$ be given, and suppose that $\bl(\smallbmat{\mbf{v}\\\mbs{\psi}},\mbf{z}\br)$ solves the PIE defined by $\mbf{G}_{\text{pie}}$, with initial conditions $\smallbmat{\mbf{v}_{0}\\\mbs{\psi}_{0}}$. Then, $\mbf{v}(0)=\mbf{v}_{0}$ and $\mbs{\psi}(0)=\mbs{\psi}_{0}$, and therefore $\mbf{v}$ and $\mbs{\psi}$ satisfy the initial conditions defined by $\mbf{v}_{0}$ and $\mbs{\psi}_{0}$. Moreover, defining $\mbf{q}$ and $\mbf{r}$ as in~\eqref{eq:qr_signals}, at any time $t\geq 0$, the identities in Eqn.~\eqref{eq:PIE_interconnection_dynamics} will be satisfied. By these identities, it follows that $\bl(\mbf{v}(t),(\mbf{z}(t),\mbf{q}(t)),(\mbf{w}(t),\mbf{r}(t))\br)$ and $\bl(\mbs{\psi}(t),\mbf{r}(t),\mbf{q}(t)\br)$ satisfy the PIEs defined by $\mbf{G}_{\text{pie},1}$ and $\mbf{G}_{\text{pie},2}$.
	Thus, $(\mbf{v},\mbf{z},\mbf{q})$ and $(\mbs{\phi},\mbf{r})$ solve the PIEs defined by $\mbf{G}_{\text{pie},1}$ and $\mbf{G}_{\text{pie},2}$ with initial conditions $\mbf{v}_{0}$ and $\mbs{\psi}_{0}$, respectively.

	\caption{Proof of Proposition~\ref{prop:PIE_interconnection_full}.}\label{block:PIE_interconnection_proof}
\end{block*}

\section{A PIE Representation of 1D ODE-PDE Systems with Delay}\label{appx:PIEmap}
%
%In modeling physical processes, the internal state of a system often has to satisfy different dynamics at the boundaries and on the interior of the domain. For such models, separate ODE and PDE subsystems can be used to represent the dynamics at the boundary and on the interior, respectively, obtaining dynamics for the full system by taking the interconnection of the ODE and PDE.  In~\cite{shivakumar2022GPDE_Arxiv}, it was proven that a general class of such linear ODE-PDE systems can be equivalently represented as PIEs. However, this PIE representation does not permit any delays in the state, which may occur in both the ODE and PDE subsystem, as in:
%\begin{align}\label{eq:example_DDE-DPDE}
%	\dot{u}(t)&=-u(t)+u(t-\tau) \\
%	\mbf{u}_{t}(t,x)&=\mbf{u}_{xx}(t,x)+10\mbf{u}(t,x)-3\mbf{u}(t,x),	&	x&\in\Omega_{0}^{1}, \notag\\
%	\mbf{u}(t,0)&=0,\qquad \mbf{u}(t,1)=u(t). \notag
%\end{align}

In this section, we provide the main technical contribution of this paper, showing that for suitably well-posed linear 1D ODE-PDE systems, with delay in either the ODE or in the PDE, there exists an equivalent PIE representation. To reduce notational complexity, we will not explicitly derive the parameters defining this PIE representation in full detail here, instead leveraging results from earlier papers. In particular, in Subsection~\ref{appx:PIEmap:ODEdelay}, we repeat the result from~\cite{peet2021DDEs_PIEs}, showing that an equivalent 1D PIE representation exists for any linear ODE with constant delays. In Subsection~\ref{appx:PIEmap:PDEdelay}, we then use the results from~\cite{shivakumar2022GPDE_Arxiv} to prove that, for any well-posed, linear, 1D PDE with constant delays, there exists an equivalent 2D PIE representation. Finally, in Subsection~\ref{appx:PIEmap:ODEPDEdelay}, we combine these results, proving that any well-posed, linear, ODE-PDE system with constant delay can be equivalently represented as a PIE as well.

%\begin{align}\label{eq:example_DDE-DPDE}
%	\dot{u}(t)&=-u(t)+u(t-\tau) \\
%	\mbf{u}_{t}(t,x)&=\mbf{u}_{xx}(t,x)+10\mbf{u}(t,x)-3\mbf{u}(t,x),	&	x&\in\Omega_{0}^{1}, \notag\\
%	\mbf{u}(t,0)&=0,\hspace*{1.0cm} \mbf{u}(t,1)=u(t). \notag
%\end{align}

% This conversion process has also been incorporated into the MATLAB software package PIETOOLS~\cite{shivakumar2021PIETOOLS}.

\subsection{A PIE Representation of ODEs with Delay}\label{appx:PIEmap:ODEdelay}

In~\cite{peet2021DDEs_PIEs}, it was shown that a general class of linear Delay Differential Equations (DDEs) can be equivalently represented as PIEs. In this paper, we consider only a restricted subclass of such DDEs, taking the form
\begin{align}\label{eq:delayODE}
	\srbmat{\dot{u}(t)\\z(t)}&=
	\left[\!{\small
		\begin{array}{ll}
			A&B_{w}\\
			C_{z}&0
	\end{array}}\!
	\right]
	\srbmat{u(t)\\w(t)} +
	\sum_{j=1}^{K}
	\left[\!{\small
		\begin{array}{ll}
			A_{j}	\\
			C_{z,j}
	\end{array}}\!
	\right]u(t-\tau_{j}),
\end{align}
where $0<\tau_{1}<\hdots<\tau_{K}$ are the delays, and where $u(t)\in\R^{n_u}$, $z(t)\in\R^{n_z}$ and $w(t)\in\R^{n_{w}}$ are all finite-dimensional.

%\vspace*{-0.5cm}
\paragraph*{\textbf{Example}}
Consider the ODE with delay
\begin{align}\label{eq:example_DDE}
	\dot{u}(t)&=-u(t) + u(t-\tau), \\
	z(t)&=u(t).	\notag
\end{align}
Defining $A=-1$, $C_{z}=1$, $A_{1}=1$ and $C_{z,1}=0$, this system can be represented as in~\eqref{eq:delayODE}, letting $\tau_{1}=\tau$ and $n_w=0$.

%For the delayed ODE-PDE~\eqref{eq:example_DDE-DPDE}, the ODE subsystem can be represented as
%\begin{align}\label{eq:example_DDE}
%	\srbmat{\dot{u}(t)\\z(t)}&=\srbmat{A\\C_{z}}u(t) + \srbmat{A_{1}\\C_{z,1}}u(t-\tau),
%\end{align}
%where $A=-1$, $C_{z}=1$, $A_{1}=1$ and $C_{z,1}=0$, and letting $\tau_{1}=1$. The output $z(t)=u(t)$ can then be passed on to define the BCs for the PDE in~\eqref{eq:example_DDE-DPDE} in the next subsection.

\subsubsection*{Expanding the Delays}
To derive a PIE representation associated to the DDE~\eqref{eq:delayODE}, we first introduce delayed states $\mbs{\phi}_{j}(t,s)=u(t-\tau_{j}s)$, defining an equivalent ODE-PDE representation of the system as
\begin{align}\label{eq:delayODE_transport}
	\srbmat{\dot{u}(t)\\z(t)}&\!=\!
	\left[\!{\small
		\begin{array}{lll}
			A&A_{\text{d}}&B_{w}\\
			C_{z}&C_{z\text{d}}&0
	\end{array}}\!
	\right]
	\srbmat{u(t)\\\mbs{\phi}(t,1)\\w(t)} \\
	\partial_{t}\mbs{\phi}_{j}(t,s)&\!=\! -\frac{1}{\tau_{j}}\partial_{s}\mbs{\phi}_{j}(t,s),
	\quad
	\mbs{\phi}_{j}(t,0)\!=\!u(t),	\hspace*{0.1cm}
	{\small
		\begin{array}{r}
			s\in\Omega_{0}^{1}, \\[0.2em]
			j\in\{1,\hdots,K\},
	\end{array}}
	\notag
\end{align}
where $\srbmat{A_{\text{d}}\\ C_{z\text{d}}}\!:=\!
\left[\!{\small
	\begin{array}{lll}
		A_{1}\!&\!\hdots\!&\!A_{K}\\C_{z,1}\!&\!\hdots \!&\!C_{z,K}
\end{array}}\!
\right]$ and $\mbs{\phi}(t,s)\!:=\!\srbmat{\mbs{\phi}_{1}(t,s)\\:\\\mbs{\phi}_{K}(t,s)}$.
For $u\in\R^{n_u}$, we denote the domain of the delayed states $\mbs{\phi}_{j}$ as
\begin{align*}
	Y_{u}:=\left\{
	\mbs{\phi}\in H_1^{n_u}[\Omega_{0}^{1}]\ \bbr\rvert\ \Delta_{s}^{0}\mbs{\phi}=u
	\right\}
\end{align*}
where for $\mbf{v}\in H_{\text{k}}[\Omega_{ac}^{bd}]$, we denote the Dirac delta operators
\[
[\Delta_{x}^{a} \mbf{v}](y):=\mbf{v}(a,y)\quad \text{and}\quad [\Delta_{y}^{c} \mbf{v}](x):=\mbf{v}(x,c).
\]
%For a function $N\in L_2^{n\times m}[\Omega_{ac}^{bd}]$, and any $\mbf{v}\in L_2^{m}[\Omega_{ac}^{bd}]$,
%we define the integral operator $\smallint$ as
%\begin{align*}
%	\left(\smallint_{a}^{b}[N]\mbf{v}\right)(y)& :=\int_{a}^{b}N(x,y)\mbf{v}(x,y) dx.
%\end{align*}
We define $\bar{Y}_{u}^{K}=\overbrace{Y_{u}\times\hdots\times Y_{u}}^{K\text{ times}}$ as the domain of the full state $\mbs{\phi}$. Finally, collecting the delays as $\mbs{\tau}=(\tau_{1},\hdots,\tau_{K})$, and the parameters defining this system as
\begin{align*}
	\mbf{G}_{\text{dde}}&=
	\left[\!{\small
		\begin{array}{lll}
			A&A_{\text{d}}&B_{w}\\
			C_{z}&C_{z\text{d}}&0
	\end{array}}\!
	\right]
	\!\in\!
	\left[\!{\small
		\begin{array}{lll}
			\R^{n_u\times n_u}& \R^{n_u\times Kn_u}& \R^{n_u\times n_w}\\
			\R^{n_z\times n_u}& \R^{n_u\times Kn_u}& \R^{n_z\times n_w}
	\end{array}}\!
	\right],
	%	\mbf{G}_{\text{ode}}&\!:=\!\{A,B_{w},C_{z}\} & &\hspace*{-0.25cm}\in \R^{n_u\times n_u}\times \R^{n_u\times n_w}\times \R^{n_z\times n_u}, \\
	%	\mbf{G}_{\text{dde,j}}&\!:=\!\{A_{j},C_{z,j}\} & &\hspace*{-0.25cm}\in \R^{n_u\times n_u}\times \R^{n_z\times n_u}, \quad j\in\{1,\hdots,K\}
\end{align*}
we define solutions to the DDE as follows.

\begin{defn}[Solution to the DDE]
	For a given input signal $w$ and initial conditions $(u_{0},\mbs{\phi}_{0})\in\R^{n_u}\times \bar{Y}_{u_{0}}^{K}$, we say that $(u,\mbs{\phi},z)$ is a solution to the DDE defined by $\{\mbf{G}_{\text{dde}},\mbs{\tau}\}$ if $(u,\mbs{\phi})$ is Frech\'et differentiable, $(u(0),\mbs{\phi}(0))=(u_{0},\mbs{\phi}_{0})$, and for all $t\geq0$, $\bl(u(t),\mbs{\phi}(t),z(t),w(t)\br)$ satisfies Eqn.~\eqref{eq:delayODE_transport}.
\end{defn}

%\paragraph*{\textbf{Example}}
%The ODE subsystem defined in~\eqref{eq:example_DDE} is defined by parameters $\mbf{G}_{\text{dde}}=\smallbmat{-1&1&\emptyset\\1&0&\emptyset}$.

\subsubsection*{Deriving a PIE Representation}

To derive a PIE representation associated to the ODE-PDE~\eqref{eq:delayODE_transport}, we first define a \textit{fundamental state} $\mbs{\psi}\in L_2^{Kn_u}[\Omega_{0}^{1}]$ associated to the delayed states $\mbs{\phi}$. In particular, this fundamental state must be free of the BCs $\mbs{\phi}_{j}(t,0)=u(t)$ imposed upon the delayed state $\mbs{\phi}\in \bar{Y}_{u}^{K}$. To achieve this, we take the derivative of the state $\mbs{\phi}$ along $\Omega_{0}^{1}$ as
\begin{align*}
	\mbs{\psi}(t,s)=\partial_{s}\mbs{\phi}(t,s).
\end{align*}
Then, imposing the BCs, $\mbs{\phi}$ can be expressed in terms of the fundamental state $\mbs{\psi}$ and $u$ using PI operators as
\begin{align*}
	\mbs{\phi}_{j}(s)&\!=\!\mbs{\phi}_{j}(0)\!+\!\int_{0}^{s}\!\bl(\partial_{s}\mbs{\phi}_{j}\br)(\theta)d\theta
	=u\!+\!\!\int_{0}^{s}\!\mbs{\psi}_{j}(\theta)d\theta, &	s&\in\Omega_{0}^{1}.
\end{align*}
Using this relation, an equivalent PIE representation of the ODE-PDE~\eqref{eq:delayODE_transport} can be easily defined, as shown in e.g.~\cite{peet2021DDEs_PIEs}.

\begin{cor}\label{cor:PIEmap_DDE}[PIE Representation of DDE]
	Let the linear map $\mcl{L}_{\text{pie}\leftarrow\text{dde}}$ be as defined in Lemma~4 of~\cite{peet2021DDEs_PIEs}, and let $\{\mcl{T},0,\mcl{A},\mcl{B}_{w},\mcl{C}_{z},0\}=\mbf{G}_{\text{pie}}=\mcl{L}_{\text{pie}\leftarrow\text{dde}}(\mbf{G}_{\text{dde}},\mbs{\tau})$.
	Then, for any input $w$, $(\smallbmat{u\\\mbs{\psi}},z)$ is a solution to the 1D PIE defined by $\mbf{G}_{\text{pie}}$ with initial conditions $(u_{0},\mbs{\psi}_{0})\in \R^{n_u}\times L_2^{Kn_u}[\Omega_{0}^{1}]$ if and only if $(\mcl{T}\smallbmat{u\\\mbs{\psi}},z)$ is a solution to the DDE defined by $\mbf{G}_{\text{dde}}$ with initial conditions $\smallbmat{u_{0}\\\mbs{\phi}_{0}}=\mcl{T}\smallbmat{u_{0}\\\mbs{\psi}_{0}}$.
	
	%	Conversely, for given initial conditions $(u_{\text{I}},\mbs{\phi}_{\text{I}})\in\R^{n_u}\times Y_{u,w}^{K(n_u,n_w)}$, $(u,\mbs{\phi},z,q)$ is a solution to the DDE defined by $\{\mbf{G}_{\text{ode}},\mbf{G}_{\text{dde}}\}$, with inputs $w$ and $r$ and initial conditions $(u_{\text{I}},\mbs{\phi}_{\text{I}})$ if and only if $(\smallbmat{\hat{u}\\\mbs{\psi}},z,q)$ with $\mbs{\psi}=\partial_{s}\mbs{\phi}$ is a solution to the PIE defined by $\mbf{G}_{\text{pie}}$ with initial conditions $\smallbmat{u_{\text{I}}\\\partial_{s}\mbs{\phi}_{\text{I}}}$.
\end{cor}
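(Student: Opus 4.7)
The plan is to reduce the corollary to a direct application of the fundamental theorem of calculus together with Lemma~4 of~\cite{peet2021DDEs_PIEs}. The key observation is that, for any $\mbs{\phi}_{j}\in Y_{u}$, the boundary condition $\mbs{\phi}_{j}(0)=u$ combined with $\mbs{\psi}_{j}:=\partial_{s}\mbs{\phi}_{j}\in L_{2}^{n_{u}}[\Omega_{0}^{1}]$ yields $\mbs{\phi}_{j}(s)=u+\int_{0}^{s}\mbs{\psi}_{j}(\theta)d\theta$, so that the map $\smallbmat{u\\\mbs{\psi}}\mapsto\smallbmat{u\\\mbs{\phi}}$ is exactly the action of a PI operator $\mcl{T}$. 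By Lemma~\ref{lem:Tmap_1D}-type reasoning this map is a bijection between $\R^{n_{u}}\times L_{2}^{Kn_{u}}[\Omega_{0}^{1}]$ and $\R^{n_{u}}\times \bar{Y}_{u}^{K}$, with inverse $(u,\mbs{\phi})\mapsto(u,\partial_{s}\mbs{\phi})$.

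For the forward direction, I would fix a solution $(\smallbmat{u\\\mbs{\psi}},z)$ of the PIE defined by $\mbf{G}_{\text{pie}}$ and set $\smallbmat{u\\\mbs{\phi}}:=\mcl{T}\smallbmat{u\\\mbs{\psi}}$. By construction $\mbs{\phi}_{j}(0)=u$, so $\mbs{\phi}\in\bar{Y}_{u}^{K}$, and $\partial_{s}\mbs{\phi}_{j}=\mbs{\psi}_{j}$. Substituting the PI-operator expression for $\mbs{\phi}(t,1)=u(t)+\int_{0}^{1}\mbs{\psi}(t,\theta)d\theta$ into the right-hand side of the PIE, and using the explicit form of $\mcl{A},\mcl{B}_{w},\mcl{C}_{z}$ produced by $\mcl{L}_{\text{pie}\leftarrow\text{dde}}$, recovers the ODE in~\eqref{eq:delayODE_transport}. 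Differentiating the identity $\mbs{\phi}_{j}(t,s)=u(t)+\int_{0}^{s}\mbs{\psi}_{j}(t,\theta)d\theta$ in $s$ yields $\partial_{s}\mbs{\phi}_{j}=\mbs{\psi}_{j}$, while differentiating in $t$ and invoking the second block of the PIE yields the transport equation $\partial_{t}\mbs{\phi}_{j}=-\tau_{j}^{-1}\partial_{s}\mbs{\phi}_{j}$. The initial condition $\smallbmat{u_{0}\\\mbs{\phi}_{0}}=\mcl{T}\smallbmat{u_{0}\\\mbs{\psi}_{0}}$ holds by definition.

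Conversely, given a DDE solution $(u,\mbs{\phi},z)$ with $\mbs{\phi}\in\bar{Y}_{u}^{K}$, I would define $\mbs{\psi}:=\partial_{s}\mbs{\phi}\in L_{2}^{Kn_{u}}[\Omega_{0}^{1}]$. The boundary condition $\mbs{\phi}(t,0)=u(t)$ together with the fundamental theorem of calculus gives $\smallbmat{u\\\mbs{\phi}}=\mcl{T}\smallbmat{u\\\mbs{\psi}}$, and reversing the substitution argument above shows that $(\smallbmat{u\\\mbs{\psi}},z)$ satisfies the PIE. Fr\'echet differentiability of $\mbs{\psi}$ in $t$ follows from Fr\'echet differentiability of $\mbs{\phi}$ and linearity of $\partial_{s}$.

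The main obstacle is purely bookkeeping: assembling the block operator $\mcl{T}$ (and the operators $\mcl{A},\mcl{B}_{w},\mcl{C}_{z}$) so that the substitution $\mbs{\phi}(t,1)=u(t)+\int_{0}^{1}\mbs{\psi}(t,\theta)d\theta$, applied to $A_{\text{d}}\mbs{\phi}(t,1)$ and $C_{z\text{d}}\mbs{\phi}(t,1)$, reproduces precisely the operators prescribed by Lemma~4 of~\cite{peet2021DDEs_PIEs}. Because this bookkeeping has already been carried out in that reference, the proof reduces to a citation, and I would simply write: \emph{the result follows by applying Lemma~4 of~\cite{peet2021DDEs_PIEs} to the ODE-PDE representation~\eqref{eq:delayODE_transport}, with the bijective correspondence provided by $\mcl{T}$.}
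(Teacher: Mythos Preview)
Your proposal is correct and takes essentially the same approach as the paper: the paper's proof is simply ``A proof is given in~\cite{peet2021DDEs_PIEs}, Lemma~4,'' and your sketch, while more detailed, ultimately reduces to exactly that citation. The extra outline you provide (the bijection via $\mcl{T}$ and the fundamental theorem of calculus) is accurate and matches the intuition the paper gives just before the corollary, so there is nothing to correct.
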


\begin{proof}
	A proof is given in~\cite{peet2021DDEs_PIEs}, Lemma~4.
\end{proof}

%\vspace*{-0.4cm}
\paragraph*{\textbf{Example}}
For the delayed ODE System~\eqref{eq:example_DDE}, we define the delayed state $\mbs{\phi}(t,s):=u(t-s\tau)$, and fundamental state $\mbs{\psi}(t,s):=\partial_{s}\mbs{\phi}(t,s)$. Then, the ODE with delay can be equivalently represented as a PIE as
{%\small
	\begin{align*}
		\dot{u}(t)&=\int_{0}^{1}\mbs{\psi}(t,s)ds, &
		\dot{u}(t)+\int_{0}^{s}\mbs{\psi}_{t}(t,\theta)d\theta&=-\frac{1}{\tau}\mbs{\psi}(t,s), \\
		z(t)&=u(t).
\end{align*}}%

\subsection{A PIE Representation of 1D PDEs with Delay}\label{appx:PIEmap:PDEdelay}

Having shown that an equivalent 1D PIE representation exists for suitable ODEs with delay, in this subsection, we show that an equivalent 2D PIE representation exists for any well-posed, linear 1D PDE with delay. In particular, we consider a system of the form
\begin{align}\label{eq:delayPDE}
	\srbmat{\mbf{u}_{t}(t)\\w(t)}&=
	\left[\!{\small
		\begin{array}{lll}
			A_{\text{p}}& A_{\text{b}} & B_{z}\\
			\smallint_{a}^{b}[C_{w\text{p}}] & C_{w\text{b}} & B_{wz}
	\end{array}}\!	
	\right]
	\srbmat{\bl(\mscr{D}_{\text{int}}\mbf{u}\br)(t)\\\bl(\Lambda_{\text{bf}}\mbf{u}\br)(t)\\ z(t)} \\
	&\hspace*{0.1cm}+\sum_{j=1}^{K}	
	\left[\!{\small
		\begin{array}{lll}
			A_{\text{p},j}& A_{\text{b},j} & B_{z,j}\\
			\smallint_{a}^{b}[C_{w\text{p},j}] & C_{w\text{b},j} & D_{wz,j}
	\end{array}}\!	
	\right]\!
	\srbmat{\bl(\mscr{D}_{\text{int}}\mbf{u}\br)(t-\tau_{j})\\\bl(\Lambda_{\text{bf}}\mbf{u}\br)(t-\tau_{j})\\z(t-\tau_{j})},	\notag\\
	%&\text{BCs}	&
	\text{with BCs}\hspace*{-0.1cm}&\qquad 0=\left[\!{\small
		\begin{array}{lll}
			\smallint_{a}^{b}[E_{\text{p}}] & E_{\text{b}} & E_{z}
		\end{array}\!}
	\right]
	\srbmat{\mscr{D}_{\text{int}}\mbf{u}(t)\\\Lambda_{\text{bf}}\mbf{u}(t)\\ z(t)}, \notag
\end{align}
where the PDE state variables are distinguished based on their order of differentiability as
\begin{align*}
	\mbf{u}(t)=\srbmat{\mbf{u}_1(t)\\\mbf{u}_{2}(t)\\\mbf{u}_{3}(t)}\in\srbmat{L_2^{n_1}[\Omega_{a}^{b}]\\H_1^{n_2}[\Omega_{a}^{b}]\\H_2^{n_3}[\Omega_{a}^{b}]}=:U^{\text{n}_{\text{p}}}[\Omega_{a}^{b}],
\end{align*}
for $\text{n}_{\text{p}}=(n_1,n_2,n_3)\in\N^3$, defining an associated differential operator $\mscr{D}_{\text{int}}:U^{\text{n}_{\text{p}}}\rightarrow L_2^{n_{\text{int}}}$ and boundary Dirac operator $\Lambda_{\text{bf}}:U^{\enn{\text{p}}}\rightarrow L_2^{n_{\text{bf}}}$ for $n_{\text{int}}=n+1+2+n_2+3n_3$ and $n_{\text{bf}}=2(n_2+2n_3)$ as
\begin{align*}
	\mscr{D}_{\text{int}}&:=\bmat{I&0&0\\0&I&0\\0&0&I\\0&\partial_{x}&0\\0&0&\partial_{x}\\0&0&\partial_{x}^2},&
	&\begin{array}{l}		
		\Lambda_{\text{bf}}:=\bmat{\Delta_{x}^{a}\mscr{D}_{\text{bc}}\\\Delta_{x}^{b}\mscr{D}_{\text{bc}}}, \\
		\text{where}\\
		\mscr{D}_{\text{bc}}:=\bmat{0&I&0\\0&0&I\\0&0&\partial_{x}}.
	\end{array}	
\end{align*}

%\vspace*{-0.5cm}
\paragraph*{\textbf{Example}}
Consider a delayed reaction-diffusion PDE
\begin{align}\label{eq:example_DPDE}
	\mbf{u}_{t}(t,x)&=\mbf{u}_{xx}(t,x)+10\mbf{u}(t,x)-3\mbf{u}(t-\tau,x), & x&\in\Omega_{0}^{1}, \notag\\
	\mbf{u}(t,0)&=0,\qquad \mbf{u}(t,1)=z(t).
\end{align}
In this PDE, $\mbf{u}(t)\in U^{(0,0,1)}[\Omega_{0}^{1}]=H_{2}[\Omega_{0}^{1}]$, and therefore $\mscr{D}_{\text{int}}=\srbmat{I\\\partial_{x}\\\partial_{x}^{2}}$ and $\Lambda_{\text{bf}}=\smallbmat{\Delta_{x}^{0}\\\Delta_{x}^{0}\partial_{x}\\\Delta_{x}^{1}\\\Delta_{x}^{1}\partial_{x}}$. Defining $A_{\text{p}}:=\bmat{1&0&10}$, $A_{\text{p},1}:=\bmat{-3&0&0}$, $E_{\text{b}}:=\smallbmat{1&0&0&0\\0&0&1&0}$, $E_{z}:=\smallbmat{0\\-1}$, and setting all other parameters equal to zero, the delayed PDE~\eqref{eq:example_DPDE} can be represented as in~\eqref{eq:delayPDE}, with $\tau_1=\tau$ and $n_{w}=0$.

%Letting $\tau_{1}=\tau$ and $z(t)=u(t)$, the PDE subsystem of~\eqref{eq:example_DDE-DPDE} can then be represented as
%\begin{align}\label{eq:example_DPDE}
%	\mbf{u}_{t}(t,x)&\!=\!
%	A_{\text{p}}\bl(\mscr{D}_{\text{int}}\mbf{u}\br)(t,x) +
%	A_{\text{p},1}\bl(\mscr{D}_{\text{int}}\mbf{u}\br)(t\!-\!\tau,x), \quad x\in\Omega_{0}^{1},\notag\\
%	0&\!=\!E_{\text{b}}\bl(\Lambda_{\text{bf}}\mbf{u}\br)(t) + E_{z}z(t),
%\end{align}
%where $\mbf{u}(t)\in U^{(0,0,1)}[\Omega_{0}^{1}]=H_{2}[\Omega_{0}^{1}]$, and\vspace*{-0.1cm}
%\begin{align*}
%	\mscr{D}_{\text{int}}&=\srbmat{I\\\partial_{x}\\\partial_{x}^{2}},\qquad
%	\Lambda_{\text{bf}}=\srbmat{\Delta_{x}^{0}\\\Delta_{x}^{0}\partial_{x}\\\Delta_{x}^{1}\\\Delta_{x}^{1}\partial_{x}},\qquad
%	\begin{array}{l}
	%		A_{\text{p}}:=
	%		\left[\!{\small
		%			\begin{array}{lll}
			%				1\!&\!0\!&\!10
			%		\end{array}}\!
	%		\right], \\
	%		A_{\text{p},1}:=
	%		\left[\!{\small
		%			\begin{array}{lll}
			%				-3\!&\!0\!&\!0
			%		\end{array}}\!
	%		\right], \\
	%		E_{\text{b}}:=\smallbmat{1&0&0&0\\0&0&1&0}, \\
	%		E_{z}:=\smallbmat{0\\-1}.
	%	\end{array}
%\end{align*}	

\subsubsection*{Expanding the Delays}

To derive a PIE representation of the delayed PDE~\eqref{eq:delayPDE}, we first represent the system as the interconnection of a 1D PDE
\begin{align}\label{eq:delayPDE_PDE}
	&\srbmat{\mbf{u}_{t}(t)\\w(t)\\\mbf{q}(t)}\!=\!
	\left[\!{\small
		\begin{array}{llll}
			A_{\text{p}}  & A_{\text{b}}  & B_{z} & \mcl{B}_{r}\\
			\mcl{C}_{w\text{p}} & C_{w\text{b}} & D_{wz} & \mcl{D}_{wr} \\
			C_{q\text{p}} & C_{q\text{b}} & D_{qz} & 0
	\end{array}}\!	
	\right]
	\srbmat{\bl(\mscr{D}_{\text{int}}\mbf{u}\br)(t)\\\bl(\Lambda_{\text{bf}}\mbf{u}\br)(t)\\z(t)\\\mbf{r}(t)}, \\[-0.4em]
	&\ 0=\left[\!{\small
		\begin{array}{lll}
			\smallint_{0}^{1}[E_{\text{p}}] & E_{\text{b}} & E_{z}
		\end{array}\!}
	\right]
	\srbmat{\mscr{D}_{\text{int}}\mbf{u}(t)\\\Lambda_{\text{bf}}\mbf{u}(t)\\z(t)},	\notag
\end{align}
with a 2D PDE,
\begin{align}\label{eq:delayPDE_transport}	
	&\partial_{t}\mbs{\phi}_{j}(t,s)=-(1/\tau_{j})\ \partial_{s}\mbs{\phi}_{j}(t,s), \hspace*{2.0cm} 	s\in\Omega_{0}^{1}, \\
	&\mbf{r}_{j}(t)=\srbmat{\bl(\mscr{D}_{\text{int}}\mbs{\phi}_{u,j}\br)(t,1)\\\bl(\Lambda_{\text{bf}}\mbs{\phi}_{u,j}\br)(t,1)\\\mbs{\phi}_{z,j}(t,1)}, \hspace*{2.0cm} j\in\{1,\hdots,K\}, \notag\\[-0.4em]
	%&\text{BCs}	&
	&\mbs{\phi}_{j}(t,0)=T_{q}\mbf{q}(t),\quad
	0=\left[\!{\small
		\begin{array}{lll}
			\smallint_{a}^{b}[E_{\text{p}}] & \!E_{\text{b}} & \!E_{z}
		\end{array}\!}
	\right]
	\srbmat{\bl(\mscr{D}_{\text{int}}\mbs{\phi}_{u,j}\br)(t,s)\\\bl(\Lambda_{\text{bf}}\mbs{\phi}_{u,j}\br)(t,s)\\\mbs{\phi}_{z,j}(t,s)}, \notag
\end{align}
where $\mbf{r}=\srbmat{\mbf{r}_1\\:\\\mbf{r}_{K}}$,  $\mbs{\phi}_{j}(t,s)=\srbmat{\mbs{\phi}_{u,j}(t,s)\\\mbs{\phi}_{z,j}(t,s)}=\srbmat{\mbf{u}(t+\tau_{j}s)\\z(t+\tau_{j}s)}$, and where we define the new parameters such that
\begin{align*}
	&\srbmat{\mcl{B}_{r}\\\mcl{D}_{wr}}=\left[\
	\left[\!{\small
		\begin{array}{lll}
			A_{\text{p},j}& A_{\text{b},j} & B_{z,j}\\
			\smallint_{a}^{b}[C_{w\text{p},j}] & C_{w\text{b},j} & D_{wz,j}
	\end{array}}\!	
	\right]_{j=(1,\hdots,K)}\ 
	\right],	\\
	&\mcl{C}_{wp}=\smallint_{a}^{b}[C_{w\text{p}}],\qquad \bmat{C_{r\text{p}}&C_{r\text{b}}&D_{rz}}=I_{(n_{\text{int}}+n_{\text{bf}}+n_{z})},	\\
	&T_{q}\mbf{q}(t)=\bmat{T_{vq}&0&T_{vz}}\srbmat{\bl(\mscr{D}_{\text{int}}\mbf{u}\br)(t)\\\bl(\Lambda_{\text{bf}}\mbf{u})(t)\\z(t)}=\srbmat{\mbf{u}(t)\\z(t)}.
\end{align*}
We denote the parameters defining the PDE~\eqref{eq:delayPDE_PDE} as
\begin{align}\label{eq:DPDE_params}
	\mbf{G}_{\text{pde}}&:=
	\left[\!{\small
		\begin{array}{lll}
			A_{\text{p}}  & A_{\text{b}}  & B_{z} \\
			C_{w\text{p}} & C_{w\text{b}} & D_{wz} 
	\end{array}}\!	
	\right] &
	\mbf{G}_{\text{pde,d}}&:=
	\left[\!{\small
		\begin{array}{l}
			\mcl{B}_{r}\\
			\mcl{D}_{wr}
	\end{array}}\!	
	\right], \\
	\mbf{G}_{\text{bc}}&:=
	\{E_{\text{p}}, E_{\text{b}}, E_{z}\}.
	& \mbf{G}_{\text{pdde}}&:=\{\mbf{G}_{\text{pde}},\mbf{G}_{\text{pde,d}}\}. \notag
	%	\in
	%	L_{2}^{n_{\text{bc}}\times n_{\text{d}}} \times L_2^{n_{\text{bc}}\times n_{\text{bf}}}\times L_2^{n_{\text{bc}}\times n_{z}},
\end{align}
Given parameters $\mbf{G}_{\text{bc}}$, we define the domain of $\mbf{u}$ as
\begin{align}\label{eq:Xmap}
	X_{z}^{\enn{\text{p}}} := \left\{
	\mbf{u}\in U^{\text{n}_{\text{p}}} ~\bbbr\rvert~
	\left[\!{\small
		\begin{array}{lll}
			\smallint_{a}^{b}[E_{\text{p}}] & E_{\text{b}} & E_{z}
		\end{array}\!}
	\right]\!
	\srbmat{\mscr{D}_{\text{int}}\mbf{u}\\\Lambda_{\text{bf}}\mbf{u}\\ z}\!=\!0
	\right\},
\end{align}
so that $\mbf{u}\in X_{z}$ only if $\mbf{u}$ satisfies the BCs defined by $\mbf{G}_{\text{bc}}$.
Similarly, for a given input $\mbf{q}$, we define the domain of the delayed states $\mbs{\phi}_{j}$ at any time as
\begin{align}\label{eq:Yset_PDE}
	\mbf{Y}_{\mbf{q}}\!:=\!\bbbbl\{&
	\srbmat{\mbs{\phi}_{u}\\\mbs{\phi}_{z}}\in \srbmat{V^{\enn{p}}[\Omega_{0a}^{1b}]\\H_{1}^{n_z}[\Omega_{0}^{1}]}\ \bbbbr\rvert\ \Delta_{s}^{0}\mbs{\phi}=T_{q}\mbf{q},\\[-1.0em]
	&\hspace*{2.75cm}
	0=\left[\!{\small
		\begin{array}{lll}
			\smallint_{a}^{b}[E_{\text{p}}] &\! E_{\text{b}} & E_{z}
		\end{array}\!}
	\right]
	\srbmat{\mscr{D}_{\text{int}}\mbs{\phi}_{u}\\\Lambda_{\text{bf}}\mbs{\phi}_{u}\\\mbs{\phi}_{z}}	
	\bbbbr\}, \nonumber
\end{align}
where the components of $\mbs{\phi}_{u,j}(t,s)=\mbf{u}(t-\tau_{j}s)$ are distinguished based on their order of differentiability as
\begin{align*}
	\mbs{\phi}_{u,j}=\srbmat{\mbs{\phi}_{1,j}\\\mbs{\phi}_{2,j}\\\mbs{\phi}_{3,j}}\in\srbmat{H_{(0,1)}[\Omega_{0a}^{1b}]\\H_{(1,1)}[\Omega_{0a}^{1b}]\\H_{(2,1)}[\Omega_{a0}^{b1}]}=:V^{\enn{p}}[\Omega_{0a}^{1b}].
\end{align*}
For the full state $\mbs{\phi}:=(\mbs{\phi}_1,\hdots,\mbs{\phi}_{K})$, we define the domain $\bar{\mbf{Y}}_{\mbf{q}}^{K}=\mbf{Y}_{\mbf{q}}\times\hdots\times \mbf{Y}_{\mbf{q}}$.

\begin{defn}[Solution to the DPDE]
	For a given input $z$ and given initial conditions $(\mbf{u}_{0},\mbs{\phi}_{0})\in X^{\enn{p}}\times \bar{\mbf{Y}}_{\mbf{q}_{0}}^{K}$ with $\mbf{q}_{0}=\smallbmat{(\mscr{D}_{\text{int}}\mbf{u}_{0})(t)\\(\Lambda_{\text{bf}}\mbf{u}_{0})(t)\\z(0)}$, we say that $((\mbf{u},\mbs{\phi}),w)$ is a solution to the delayed PDE defined by $\{\mbf{G}_{\text{pdde}},\mbf{G}_{\text{bc}},\mbs{\tau}\}$ if $(\mbf{u},\mbs{\phi})$ is Frech\'et differentiable, $(\mbf{u}(0),\mbs{\phi}(0))=(\mbf{u}_{0},\mbs{\phi}_{0})$, and there exist $(\mbf{q},\mbf{r})$ such that $\bl(\mbf{u}(t),\mbs{\phi}(t),w(t),z(t),\mbf{q}(t),\mbf{r}(t)\br)$ satisfies Eqns.~\eqref{eq:delayPDE_PDE} and~\eqref{eq:delayPDE_transport} for all $t\geq0$.
\end{defn}

\subsubsection*{Deriving a PIE Representation}

To show that the system defined by the 1D PDE~\eqref{eq:delayPDE_PDE} coupled to the 2D PDE~\eqref{eq:delayPDE_transport} can be equivalently represented as a PIE, we first show that each individual system can be represented as a PIE. Consider first the PDE~\eqref{eq:delayPDE_PDE}. To define the fundamental state associated to this PDE, we use a differential operator $\mscr{D}:X^{\enn{\text{p}}}\rightarrow L_2^{\|\enn{\text{p}}\|}$, where $\|\enn{\text{p}}\|=n_1+n_2+n_3$ for $\enn{\text{p}}=(n_1,n_2,n_3)\in\N^3$. In particular, we define
\begin{align}\label{eq:Dmap_uPDE}
	\mbf{v}=\srbmat{\mbf{v}_1\\\mbf{v}_2\\\mbf{v}_3}=
	\underbrace{\left[\!{\small
			\begin{array}{lll}
				I_{n_1}\\ &\partial_{x}\\&&\partial_{x}^2
		\end{array}}\!
		\right]}_{\mscr{D}}
	\srbmat{\mbf{u}_1\\\mbf{u}_2\\\mbf{u}_3}
	=\mscr{D}\mbf{u}.
\end{align}
Then, if the BCs defined by $\mbf{G}_{\text{bc}}$ are well-posed, by Theorem~10 in~\cite{shivakumar2022GPDE_Arxiv}, there exist PI operators $\mcl{T}$ and $\mcl{T}_{z}$ such that $\mbf{u}(t)=\mcl{T}\mbf{v}(t)+\mcl{T}_{z}\mbf{z}(t)\in X_{z}^{\enn{p}}$ for any $\mbf{v}\in L_2^{\|\enn{p}\|}$.
Using this relation, we can define a PIE representation of the PDE~\eqref{eq:delayPDE_PDE} for arbitrary inputs $\mbf{r}$.

\begin{lem}[PIE Representation of 1D PDE]\label{lem:PIEmap:PDE}
	Let parameters $\{\mbf{G}_{\text{pdde}},\mbf{G}_{\text{bc}}\}$ be as in~\eqref{eq:DPDE_params}, and such that $\mbf{G}_{\text{bc}}$ defines a well-posed set of BCs. Let $\mbf{G}_{\text{pie},0}\!=\!\{\mcl{T},\mcl{T}_{z},\mcl{A},\mcl{B}_{z},\mcl{C}_{w},\mcl{D}_{wz}\!\}$ define the PIE associated to the PDE~$\{\mbf{G}_{\text{pde}},\mbf{G}_{\text{bc}}\}$ without delay ($n_q=n_r=0$), as defined in Thm.~12 of~\cite{shivakumar2022GPDE_Arxiv}.
	%	\begin{align*}
		%		\mbf{G}_{\text{pde}}=
		%		\left[\!{\small
			%		\begin{array}{lll}
				%			A_{\text{p}}  & A_{\text{b}}  & B_{z} \\
				%			\mcl{C}_{wp} & C_{w\text{b}} & D_{wz}
				%		\end{array}}\!	
		%		\right]
		%	\end{align*}
	%	denote the parameters defining the PDE~\eqref{eq:delayPDE} without delay.
	%	Let the linear parameter map $\mcl{L}_{\text{pie}\leftarrow \text{pde}}$ be as defined in Thm.~12 of~\cite{shivakumar2022GPDE_Arxiv}, so that
	%	$\{\mcl{T}_{0},\mcl{T}_{z,0},\mcl{A}_{0},\mcl{B}_{z,0},\mcl{C}_{w,0},\mcl{D}_{wz,0}\}=\mbf{G}_{\text{pie},0}=\mcl{L}_{\text{pie}\leftarrow \text{pde}}\bl(\mbf{G}_{\text{pde}},\mbf{G}_{\text{bc}}\br)$ defines the PIE associated to the PDE~\eqref{eq:delayPDE} without delay.
	Define
	\begin{align*}
		\mbf{G}_{\text{pie}}&\!=\!
		\left[\!{\small	\arraycolsep=2.5pt
			\begin{array}{lll}
				\mcl{T}&\mcl{T}_{z}&0\\
				\mcl{A}&\mcl{B}_{z}&\mcl{B}_{r}\\
				\mcl{C}_{w}&\mcl{D}_{wz}&\mcl{D}_{wr}\\
				\mcl{C}_{q}&\mcl{D}_{qz}&0
		\end{array}}\!
		\right],	&
		&{\small\begin{array}{l}
				\mcl{C}_{q}=C_{q\text{p}}\mcl{T}_{\text{int}}+C_{q\text{b}}\mcl{T}_{\text{bf}},\\[0.5em]
				\mcl{D}_{qz}=D_{qz}+ C_{q\text{p}}\mcl{T}_{\text{int},z}\\[0.25em]
				\hspace*{1.75cm}+\ C_{q\text{b}}\mcl{T}_{\text{bf},w},
		\end{array}}
	\end{align*}
	where $\srbmat{\mcl{B}_{r}\\\mcl{D}_{wr}}=\mbf{G}_{\text{pde,d}}$,~~ $\bmat{C_{r\text{p}}&C_{r\text{b}}&D_{rz}}=I_{n_{q}}$, and
	\begin{align}\label{eq:Tops_bf}
		\mcl{T}_{\text{int}}&=\mscr{D}_{\text{int}}\circ\mcl{T},	&
		\mcl{T}_{\text{bf}}&=\Lambda_{\text{bf}}\circ\mcl{T}, \notag\\
		\mcl{T}_{\text{int},z}&=\mscr{D}_{\text{int}}\circ\mcl{T}_{z}, &
		\mcl{T}_{\text{bf},z}&=\Lambda_{\text{bf}}\circ \mcl{T}_{z}, 
	\end{align}
	Then, $(\mbf{v},w,\mbf{q})$ is a solution to the PIE defined by $\mbf{G}_{\text{pie}}$ with inputs $(z,\mbf{r})$ and initial conditions $\mbf{v}_{0}\in L_2^{\|\enn{p}\|}[\Omega_{a}^{b}]$ if and only if $(\mbf{u},w,\mbf{q})$ with $\mbf{u}=\mcl{T}\mbf{v}+\mcl{T}_{z}z$ is a solution to the PDE~\eqref{eq:delayPDE_PDE} defined by $\{\mbf{G}_{\text{pdde}},\mbf{G}_{\text{bc}}\}$ with inputs $(z,\mbf{r})$ and initial conditions $\mbf{u}_{0}=\mcl{T}\mbf{v}_{0}+\mcl{T}_{z}z\in X_{z(0)}^{\enn{p}}$.
	
	%	Conversely, $(\mbf{u},z,\mbf{q})$ is a solution to the PDE defined by $\{\mbf{G}_{\text{pde}},\mbf{G}_{\text{pde},r},\mbf{G}_{\text{pde},q},\mbf{G}_{\text{bc}}\}$ with inputs $w$ and $\mbf{r}$ and initial conditions $\mbf{u}_{\text{I}}\in X_{w(0)}^{\enn{\text{p}}}$ if and only if $(\mbf{v},z,\mbf{q})$ with $\mbf{v}=\mscr{D}\mbf{u}$ is a solution to the PIE defined by $\mbf{G}_{\text{pie}}$ with inputs $w$ and $\mbf{r}$ and initial conditions $\mbf{v}_{\text{I}}=\mscr{D}\mbf{u}$.
\end{lem}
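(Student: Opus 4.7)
The plan is to reduce this lemma to the no-delay result (Thm.~12 of~\cite{shivakumar2022GPDE_Arxiv}) by observing that the delayed term enters the PDE~\eqref{eq:delayPDE_PDE} purely as an exogenous forcing signal $\mbf{r}(t)$, and that the new output $\mbf{q}$ is an affine function of the PDE state $\mbf{u}$ and the input $z$ alone. The map $\mbf{u} = \mcl{T}\mbf{v} + \mcl{T}_z z$ provided by Thm.~10 of~\cite{shivakumar2022GPDE_Arxiv} therefore does all the heavy lifting, and I only need to verify that the three extra operators $\mcl{B}_r$, $\mcl{C}_q$, $\mcl{D}_{qz}$ in $\mbf{G}_{\text{pie}}$ are correct.

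First I would fix an arbitrary exogenous signal $\mbf{r}$ and view the PDE~\eqref{eq:delayPDE_PDE} as the no-delay PDE defined by $\{\mbf{G}_{\text{pde}},\mbf{G}_{\text{bc}}\}$, augmented by the forcing terms $\mcl{B}_r \mbf{r}(t)$ in the dynamics and $\mcl{D}_{wr}\mbf{r}(t)$ in the $w$-output. Well-posedness of the BCs guarantees, via Thm.~10, that any $\mbf{u}(t)\in X_{z(t)}^{\enn{p}}$ admits the unique decomposition $\mbf{u}(t) = \mcl{T}\mbf{v}(t) + \mcl{T}_z z(t)$ with $\mbf{v}(t) = \mscr{D}\mbf{u}(t)\in L_2^{\|\enn{p}\|}$, and conversely every such $\mbf{v}$ yields an $\mbf{u}$ in $X_{z(t)}^{\enn{p}}$. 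Substituting this identity into the PDE and applying Thm.~12 term-by-term to the operators $\{A_{\text{p}},A_{\text{b}},B_z,C_{w\text{p}},C_{w\text{b}},D_{wz}\}$ yields exactly the PIE defined by $\mbf{G}_{\text{pie},0}$. Since $\mcl{B}_r\mbf{r}(t)$ and $\mcl{D}_{wr}\mbf{r}(t)$ are not touched by the state transformation, they carry over unchanged into the PIE, giving the first two rows of $\mbf{G}_{\text{pie}}$.

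Next I would derive the $\mbf{q}$-output formula. By~\eqref{eq:delayPDE_PDE},
\begin{equation*}
\mbf{q}(t) = C_{q\text{p}}(\mscr{D}_{\text{int}}\mbf{u})(t) + C_{q\text{b}}(\Lambda_{\text{bf}}\mbf{u})(t) + D_{qz}z(t).
\end{equation*}
Substituting $\mbf{u} = \mcl{T}\mbf{v} + \mcl{T}_z z$ and invoking the definitions in~\eqref{eq:Tops_bf},
\begin{align*}
\mbf{q}(t) &= C_{q\text{p}}\bl(\mcl{T}_{\text{int}}\mbf{v}(t) + \mcl{T}_{\text{int},z}z(t)\br) \\
&\quad + C_{q\text{b}}\bl(\mcl{T}_{\text{bf}}\mbf{v}(t) + \mcl{T}_{\text{bf},z}z(t)\br) + D_{qz}z(t) \\
&= \bl(C_{q\text{p}}\mcl{T}_{\text{int}} + C_{q\text{b}}\mcl{T}_{\text{bf}}\br)\mbf{v}(t) \\
&\quad + \bl(D_{qz}+C_{q\text{p}}\mcl{T}_{\text{int},z}+C_{q\text{b}}\mcl{T}_{\text{bf},z}\br)z(t),
\end{align*}
which matches the stated expressions for $\mcl{C}_q$ and $\mcl{D}_{qz}$. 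Because each $\mcl{T}_{\bullet}$ is a composition of PI operators with differential or Dirac operators bounded on $X_{z}^{\enn{p}}$, the resulting $\mcl{C}_q$ and $\mcl{D}_{qz}$ are themselves PI operators, as required.

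Finally I would handle the initial condition and the converse. The identity $\mbf{u}_0 = \mcl{T}\mbf{v}_0 + \mcl{T}_z z(0)\in X_{z(0)}^{\enn{p}}$ follows from Thm.~10 applied at $t=0$, and $\mbf{v}(0)=\mbf{v}_0$ is equivalent to $\mbf{u}(0)=\mbf{u}_0$ under this bijection. The reverse implication follows symmetrically: given any PDE solution $(\mbf{u},w,\mbf{q})$, defining $\mbf{v} := \mscr{D}\mbf{u}$ recovers a PIE solution by the same substitution chain read backwards. The only delicate point in the whole argument is ensuring the BC-consistency is preserved throughout the time evolution, but this is guaranteed by the construction of $\mcl{T},\mcl{T}_z$ in Thm.~10 together with the well-posedness hypothesis on $\mbf{G}_{\text{bc}}$; this is the step where I would expect to cite~\cite{shivakumar2022GPDE_Arxiv} most directly rather than reprove anything.
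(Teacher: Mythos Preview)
Your proposal is correct and follows essentially the same approach as the paper: reduce to the no-delay result of Thm.~12 in~\cite{shivakumar2022GPDE_Arxiv}, observe that the $\mbf{r}$-dependent terms pass through the state transformation $\mbf{u}=\mcl{T}\mbf{v}+\mcl{T}_z z$ unchanged since the BCs do not involve $\mbf{r}$, and then verify the $\mbf{q}$-output formula by direct substitution using~\eqref{eq:Tops_bf}. Your substitution chain for $\mbf{q}(t)$ is in fact almost identical to the paper's, and your handling of the initial conditions and converse direction matches as well.
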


%\begin{proof}
%	Since the BCs do not depend on the value of the input $\mbf{r}$, for any $\mbf{v}\in L_2$, $\mbf{u}=\mcl{T}\mbf{v}+\mcl{T}_{z}z$ will satisfy the BCs defined by $\mbf{G}_{\text{bc}}$ (as proven in Thm.~12 of~\cite{shivakumar2022GPDE_Arxiv}). Substituting this relation into the PDE~\eqref{eq:delayPDE_PDE}, we obtain the PIE representation defined by $\mbf{G}_{\text{pie}}$. A full proof is given in the arXiv version~(\cite{jagt2023PIE_DPDE_Arxiv}).
%\end{proof}

\begin{proof}
	Defining $\mbf{G}_{\text{pie},0}$ as in Thm.~12 of~\cite{shivakumar2022GPDE_Arxiv}, $(\mbf{v},w)$ is a solution to the PIE defined by $\mbf{G}_{\text{pie},0}$ with initial conditions $\mbf{v}_{0}$ if and only if $(\mbf{u},z)$ with $\mbf{u}=\mcl{T}\mbf{v}+\mcl{T}_{z}z$ is a solution to the PDE defined by $\{\mbf{G}_{\text{pdde}},\mbf{G}_{\text{bc}}\}$ with $n_{q}=n_{r}=0$ and with initial conditions $\mbf{u}_{0}=\mcl{T}\mbf{v}_{0}$. Since the BCs in the PDE defined by $\{\mbf{G}_{\text{pdde}},\mbf{G}_{\text{bc}}\}$ do not depend on the input signal $\mbf{r}$, it follows that for any $\mbf{v}\in L_2^{\|\enn{p}\|}[\Omega_{a}^{b}]$, $\mbf{u}=\mcl{T}\mbf{v}+\mcl{T}_{z}z\in X_{z}^{\enn{p}}$ also satisfies the BCs of the PDE defined by $\{\mbf{G}_{\text{pdde}},\mbf{G}_{\text{bc}}\}$ with $n_{r},n_{q}\neq 0$. Moreover, $\mbf{v}$ satisfies the initial conditions defined by $\mbf{v}_{0}$ if and only if $\mbf{u}$ satisfies the initial conditions defined by $\mbf{u}_{0}$.

	Now, let $\mbf{v}\in L_2^{\|\enn{p}\|}$ be arbitrary, and let $\mbf{u}=\mcl{T}\mbf{v}+\mcl{T}_{z}z\in X_{z}^{\enn{p}}$. Since $\mbf{v}$ is a solution to the PIE defined by $\mbf{G}_{\text{pie},0}$ if and only if $\mbf{u}$ is a solution to the PDE defined by $\{\mbf{G}_{\text{pdde}},\mbf{G}_{\text{bc}}\}$, it follows that, for any $t\geq 0$,
	\begin{align*}
		&\srbmat{\mbf{u}_{t}(t)\\w(t)}
		-
		\left[\!\!{\small \arraycolsep=2.5pt
			\begin{array}{lll}		
				A_{\text{p}}&A_{\text{b}}&B_{z}\\
				C_{w\text{p}}&C_{w\text{b}}&D_{wz}
		\end{array}}\!\!	
		\right]\!
		\srbmat{\bl(\mscr{D}_{\text{int}}\mbf{u}\br)(t)\\\bl(\Lambda_{\text{bf}}\mbf{u}\br)(t)\\z(t)}
		-
		\srbmat{\mcl{B}_{r}\\\mcl{D}_{wr}}\mbf{r}(t) \\%\label{eq:LHS}\\
		&\hspace*{1.25cm}=\!
		\srbmat{\mcl{T}\mbf{v}_{t}(t)+\mcl{T}_{z}\dot{z}(t)\\w(t)}
		-
		\left[\!{\small
			\begin{array}{lll}
				\mcl{A}&\mcl{B}_{z}&\mcl{B}_{r}\\
				\mcl{C}_{w}&\mcl{D}_{wz}&\mcl{D}_{wr}
		\end{array}}\!
		\right]
		\srbmat{\mbf{v}(t)\\z(t)\\\mbf{r}(t)} \notag
	\end{align*}
	Similarly, applying the definition of the operators $\mcl{C}_{q}$,
	\begin{align*}
		&\mbf{q}(t)-C_{q\text{p}}\bl(\mscr{D}_{\text{int}}\mbf{u}\br)(t) - C_{q\text{b}}\bl(\Lambda_{\text{bf}}\mbf{u}\br)(t) -D_{qz}z(t) \\
		&=\mbf{q}(t)-C_{q\text{p}}\bl(\mscr{D}_{\text{int}}[\mcl{T}\mbf{v}+\mcl{T}_{z}z]\br)(t) \\ &\qquad\qquad-C_{q\text{b}}\bl(\Lambda_{\text{bf}}[\mcl{T}\mbf{v}+\mcl{T}_{z}z]\br)(t) - D_{qz}z(t) \notag\\
		&\hspace*{4.25cm}=\mbf{q}(t)-\mcl{C}_{q}\mbf{v}(t) - \mcl{D}_{qz}z(t).	
	\end{align*}
	%Hence, Expression~\eqref{eq:LHSq} is equal to $0$ if and only if Expression~\eqref{eq:RHSq} is equal to 0.
	It follows that, $(\mbf{v},w,\mbf{q})$ satisfies the PIE defined by $\mbf{G}_{\text{pie}}$ if and only if $(\mcl{T}\mbf{v}+\mcl{T}_{z}z,\mbf{q})$ satisfies the PDE~\eqref{eq:delayPDE_PDE} defined by $\{\mbf{G}_{\text{pdde}},\mbf{G}_{\text{bc}}\}$.
	
\end{proof}

\paragraph*{\textbf{Example}}
For the PDE with delay defined by~\eqref{eq:example_DPDE}, the fundamental state is given by $\mbf{v}(t,x)=\partial_{x}^{2}\mbf{u}(t,x)$. Defining 
\begin{align}\label{eq:example_Tmap_DPDE}
	\bl(\!\mcl{T}\mbf{v}\br)(t,x)\!=\!\!\int_{a}^{x}\!\!\!\theta(x\!-\!1)\mbf{v}(t,\theta)d\theta \!+\! \!\int_{x}^{b}\!\!\! x(\theta\!-\!1)\mbf{v}(t,\theta)d\theta,
\end{align}
and $(\mcl{T}_{z}z)(t,x)=xz(t)$, we can retrieve the PDE state as $\mbf{u}(t)=(\mcl{T}\mbf{v})(t)+(\mcl{T}_{z}z)(t)$. Defining $\mbs{\phi}_{u}(t,s)=\mbf{u}(t-\tau s)$, the PDE~\eqref{eq:example_DPDE} can then be equivalently represented as
{
	\begin{align*}
		\mcl{T}_{z}\dot{z}(t) + \mcl{T}\mbf{v}_{t}(t) 
		&=\mbf{v}(t) + 10\mcl{T}_{z}z(t) +10\mcl{T}\mbf{v}(t) - 3\mbs{\phi}_{u}(t,1).
	\end{align*}
}%

Having shown that the PDE~\eqref{eq:delayPDE_PDE} can be equivalently represented as a PIE, we now show that the PDE~\eqref{eq:delayPDE_transport} can also be equivalently represented as a PIE. For this, we define the fundamental state associated to the states $\mbs{\phi}_{j}$ as
\begin{align}\label{eq:Dmap_vPDE}
	\mbs{\psi}_{j}=\srbmat{\mbs{\psi}_{u,j}\\\mbs{\psi}_{z,j}}=
	\underbrace{\left[\!{\small
			\begin{array}{ll}
				\partial_{s}\circ\mscr{D} \\
				& \partial_{s}
		\end{array}}\!
		\right]}_{\bar{\mscr{D}}_{v}}
	\srbmat{\mbs{\phi}_{u,j}\\\mbs{\phi}_{z,j}}
	=\bar{\mscr{D}}_{v}\mbs{\phi}_{j}, \\[-1.6em] \notag
\end{align}
where $\mscr{D}$ is as defined in~\eqref{eq:Dmap_uPDE}, so that $\mbf{v}=\mscr{D}\mbf{u}$ is the fundamental state associated to $\mbf{u}\in X_{z}$. Since the BCs imposed upon the states $\mbs{\phi}_{j}$ are defined by the same $\mbf{G}_{\text{bc}}$ as the BCs imposed upon $\mbf{u}$, the same PI operators $\mcl{T}$ and $\mcl{T}_{z}$ can also be used to derive the PIE representation of the PDE~\eqref{eq:delayPDE_transport}, as we prove in the following Lemma.

\begin{lem}\label{lem:PIEmap:PDE_transport}
	Let $\mbf{G}_{\text{bc}}$ define a well-posed set of boundary conditions, and let operators $\{\mcl{T},\mcl{T}_{z}\}$ be as defined Lemma~\ref{lem:PIEmap:PDE}. For $\mbs{\psi}_{j}=\smallbmat{\mbs{\psi}_{u,j}\\\mbs{\psi}_{z,j}}$, define the 2D PI operators
	{%\small
		\begin{align*}
			\bl(\bar{\mcl{T}}_{v,j}\mbs{\psi}_{j}\br)(t,s)&=\!\int_{0}^{s}\!\srbmat{\mcl{T}\mbs{\psi}_{u,j}(t,\theta)+\mcl{T}_{z}\mbs{\psi}_{z,j}(t,\theta)\\\mbs{\psi}_{z,j}(t,\theta)}d\theta ,	\quad s\in\Omega_{0}^{1},\\
			\bl(\bar{\mcl{A}}_{v,j}\mbs{\psi}_{j}\br)(t,s)&=-\frac{1}{\tau_{j}}\srbmat{\mcl{T}\mbs{\psi}_{u,j}(t,s)+\mcl{T}_{z}\mbs{\psi}_{z,j}(t,s)\\\mbs{\psi}_{z,j}(t,s)}, \\
			\bl(\bar{\mcl{C}}_{r,j}\mbs{\psi}_{j}\br)(t)&=\!\int_{0}^{1}\!\srbmat{\mcl{T}_{\text{int}}\mbs{\psi}_{u,j}(t,s)+\mcl{T}_{\text{int},z}\mbs{\psi}_{z,j}(t,s)\\\mcl{T}_{\text{bf}}\mbs{\psi}_{u,j}(t,s)+\mcl{T}_{\text{bf},z}\mbs{\psi}_{z,j}(t,s)\\\mbs{\psi}_{z,j}(t,s)}ds,
		\end{align*}
	}%
	%for $\mbs{\psi}_{j}=\smallbmat{\mbs{\psi}_{u,j}\\\mbs{\psi}_{z,j}}\in \text{Z}_2^{(n_w,\|\enn{p}\|)}[\Omega_{0a}^{1b}]$, 
	where $\mcl{T}_{\text{int}}$, $\mcl{T}_{\text{int},z}$, $\mcl{T}_{\text{bf}}$, and $\mcl{T}_{\text{bf},z}$ are as defined in~\eqref{eq:Tops_bf}. Then, for a given input $\mbf{q}=\smallbmat{\mscr{D}_{\text{int}}\mbf{q}_{u}(t)\\\Lambda_{\text{bf}}\mbf{q}_{u}(t)\\\mbf{q}_{z}(t)}$, $(\mbs{\psi}_{j},\mbf{r}_{j})$ is a solution to the PIE defined by $\mbf{G}_{\text{pie}}=\{\bar{\mcl{T}}_{v,j},T_{q},\bar{\mcl{A}}_{v,j},0,\bar{\mcl{C}}_{r,j},I_{n_q}\}$ with initial conditions $\mbs{\psi}_{0,j}\in L_{2}^{\|\enn{\text{p}}\|+n_{z}}$ if and only if $(\mbs{\phi}_{j},\mbf{r}_{j})$ with $\mbs{\phi}_{j}=\bar{\mcl{T}}_{v,j}\mbs{\psi}_{j}+T_{q}\mbf{q}$ is a solution to the $j$th PDE~\eqref{eq:delayPDE_transport} with initial conditions $\mbs{\phi}_{0,j}=\bar{\mcl{T}}_{v,j}\mbs{\psi}_{0,j}+T_{q}\mbf{q}(0)\in \mbf{Y}_{\mbf{q}(0)}$.
	
	%	\begin{align}\label{eq:delayPDE_transportPIE}
		%		\srbmat{E_{q}\mbf{q}_{t}(t)+\partial_{t}\bar{\mcl{T}}_{v}\mbs{\psi}_{j}(t)\\\mbf{r}_{j}(t)}&=
		%		\left[\!{\small
			%			\begin{array}{ll}
				%				\bar{\mcl{A}}_{v,j}&0\\
				%				\bar{\mcl{C}}_{r,j}&I\\
				%		\end{array}}\!
		%		\right]
		%		\srbmat{\mbs{\psi}_{j}(t)\\\mbf{q}(t)},	
		%	\end{align}
\end{lem}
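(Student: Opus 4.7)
The plan is to mirror the proof of Lemma~\ref{lem:PIE_2D}, lifting the 1D PIE isomorphism from Lemma~\ref{lem:PIEmap:PDE} pointwise in the delay variable $s\in\Omega_{0}^{1}$ and then recovering the $s$-dependence via the fundamental theorem of calculus. The central step is to establish a 2D analog of the $\mcl{T}$-map: for every input $\mbf{q}$, every $\mbs{\phi}_{j}\in\mbf{Y}_{\mbf{q}}$, and every $\mbs{\psi}_{j}\in L_{2}^{\|\enn{\text{p}}\|+n_{z}}$, I claim that $\mbs{\phi}_{j}=T_{q}\mbf{q}+\bar{\mcl{T}}_{v,j}\bl(\bar{\mscr{D}}_{v}\mbs{\phi}_{j}\br)$ and $\mbs{\psi}_{j}=\bar{\mscr{D}}_{v}\bl(T_{q}\mbf{q}+\bar{\mcl{T}}_{v,j}\mbs{\psi}_{j}\br)$. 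Once this 2D isomorphism is in place, the rest of the proof is a direct substitution analogous to the computation in Lemma~\ref{lem:PIE_2D}.

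To prove the forward direction, I would fix $\mbs{\phi}_{j}\in\mbf{Y}_{\mbf{q}}$. By definition of $\mbf{Y}_{\mbf{q}}$, the slice $\mbs{\phi}_{j}(s)$ lies in $X_{\mbs{\phi}_{z,j}(s)}^{\enn{\text{p}}}$ for every $s\in[0,1]$, so Lemma~\ref{lem:PIEmap:PDE} applied slicewise gives $\mbs{\phi}_{u,j}(s)=\mcl{T}\mscr{D}\mbs{\phi}_{u,j}(s)+\mcl{T}_{z}\mbs{\phi}_{z,j}(s)$. Differentiating in $s$ (using that $\mcl{T},\mcl{T}_{z}$ are independent of $s$) and integrating from $0$ to $s$ via the fundamental theorem of calculus, together with the boundary condition $\Delta_{s}^{0}\mbs{\phi}_{j}=T_{q}\mbf{q}$, yields $\mbs{\phi}_{j}=T_{q}\mbf{q}+\bar{\mcl{T}}_{v,j}\mbs{\psi}_{j}$. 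For the converse, applying $\bar{\mscr{D}}_{v}$ to $T_{q}\mbf{q}+\bar{\mcl{T}}_{v,j}\mbs{\psi}_{j}$ differentiates out the outer integral in $s$ and invokes the identity $\mscr{D}\circ\mcl{T}=I$, $\mscr{D}\circ\mcl{T}_{z}=0$ from Lemma~\ref{lem:PIEmap:PDE} to return $\mbs{\psi}_{j}$.

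With this 2D isomorphism in hand, I would substitute $\mbs{\phi}_{j}=T_{q}\mbf{q}+\bar{\mcl{T}}_{v,j}\mbs{\psi}_{j}$ into~\eqref{eq:delayPDE_transport}. Since $T_{q}\mbf{q}$ is independent of $s$ and $\partial_{s}\bar{\mcl{T}}_{v,j}\mbs{\psi}_{j}=\smallbmat{\mcl{T}\mbs{\psi}_{u,j}+\mcl{T}_{z}\mbs{\psi}_{z,j}\\\mbs{\psi}_{z,j}}$, the transport equation $\partial_{t}\mbs{\phi}_{j}=-(1/\tau_{j})\partial_{s}\mbs{\phi}_{j}$ becomes $\bar{\mcl{T}}_{v,j}\partial_{t}\mbs{\psi}_{j}+T_{q}\dot{\mbf{q}}=\bar{\mcl{A}}_{v,j}\mbs{\psi}_{j}$, which is precisely the PIE defined by $\mbf{G}_{\text{pie}}$. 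For the output, evaluating at $s=1$ gives $\mbs{\phi}_{j}(t,1)=T_{q}\mbf{q}(t)+\int_{0}^{1}\smallbmat{\mcl{T}\mbs{\psi}_{u,j}+\mcl{T}_{z}\mbs{\psi}_{z,j}\\\mbs{\psi}_{z,j}}d\theta$; applying $\mscr{D}_{\text{int}}$ and $\Lambda_{\text{bf}}$ (which commute with integration in $s$) and invoking the definitions in~\eqref{eq:Tops_bf} produces exactly $\mbf{r}_{j}=I_{n_{q}}\mbf{q}+\bar{\mcl{C}}_{r,j}\mbs{\psi}_{j}$. The initial-condition correspondence $\mbs{\phi}_{0,j}=\bar{\mcl{T}}_{v,j}\mbs{\psi}_{0,j}+T_{q}\mbf{q}(0)$ is then immediate from the isomorphism evaluated at $t=0$.

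The main obstacle I anticipate is the bookkeeping required to verify that the slicewise BCs encoded in $\mbf{Y}_{\mbf{q}}$ are indeed preserved under $\bar{\mcl{T}}_{v,j}$, so that the reconstructed state $T_{q}\mbf{q}+\bar{\mcl{T}}_{v,j}\mbs{\psi}_{j}$ lies in $\mbf{Y}_{\mbf{q}}$ rather than merely in the ambient Sobolev space $V^{\enn{p}}\times H_{1}^{n_{z}}$. This reduces to verifying that, because $\mcl{T},\mcl{T}_{z}$ and the BC operators in $\mbf{G}_{\text{bc}}$ only act in $x$ while the $s$-integration in $\bar{\mcl{T}}_{v,j}$ commutes with them, the image of every slice automatically satisfies the BCs; this is more of a careful regularity and commutativity check than a genuinely difficult step, and it parallels the corresponding argument in the proof of Lemma~\ref{lem:PIE_2D}.
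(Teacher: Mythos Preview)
Your proposal is correct and follows essentially the same approach as the paper: apply the 1D reconstruction $\mbs{\phi}_{u,j}(s)=\mcl{T}\mscr{D}\mbs{\phi}_{u,j}(s)+\mcl{T}_{z}\mbs{\phi}_{z,j}(s)$ slicewise in $s$, integrate via the fundamental theorem of calculus using $\Delta_{s}^{0}\mbs{\phi}_{j}=T_{q}\mbf{q}$ to obtain $\mbs{\phi}_{j}=T_{q}\mbf{q}+\bar{\mcl{T}}_{v,j}\mbs{\psi}_{j}$, and then substitute directly to match the transport dynamics with $\bar{\mcl{A}}_{v,j}$ and the output at $s=1$ with $\bar{\mcl{C}}_{r,j}\mbs{\psi}_{j}+\mbf{q}$. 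Your additional remarks on the inverse identity $\bar{\mscr{D}}_{v}\circ\bar{\mcl{T}}_{v,j}=I$ and on the slicewise BC preservation are exactly the points the paper either states in one line or leaves to ``a similar derivation.''
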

\begin{proof}
	Since $\mbf{G}_{\text{bc}}$ is the same as in Lemma~\ref{lem:PIEmap:PDE}, letting $\{\mcl{T},\mcl{T}_{z}\}$ be as in Lemma~\ref{lem:PIEmap:PDE} as well, each $\mbs{\phi}_{j}(t,s)$ must satisfy
	\begin{align*}
		\mbs{\phi}_{u,j}(s)&=\mcl{T}\mscr{D}\mbs{\phi}_{u,j}(s) + \mcl{T}_{z}\mbs{\phi}_{z,j},	&	s&\in\Omega_{0}^{1}.
	\end{align*}
	By the fundamental theorem of calculus, it follows that
	{\begin{align*}
			\mbs{\phi}_{j}(s)&=\mbs{\phi}_{j}(0)+\int_{0}^{s}\srbmat{\bl(\partial_{s}\mcl{T}\mscr{D}\mbs{\phi}_{u,j}\br)(\theta) + \bl(\partial_{s}\mcl{T}_{z}\mbs{\phi}_{z,j}\br)(\theta)\\\bl(\partial_{s}\mbs{\phi}_{z,j}\br)(\theta)}d\theta \\
			&=T_{q}\mbf{q}+\int_{0}^{s}\srbmat{\bl(\mcl{T}\mbs{\psi}_{u,j}\br)(\theta) + \bl(\mcl{T}_{z}\mbs{\psi}_{z,j}\br)(\theta)\\\mbs{\psi}_{z,j}(\theta)} d\theta,
	\end{align*}}%
	where $\mbs{\psi}_{j}=\bar{\mscr{D}}_{v}\mbs{\phi}_{j}$ is as in~\eqref{eq:Dmap_vPDE}.	 Hence, for any $\mbs{\psi}_{j}(t)\in L_{2}^{\|\enn{\text{p}}\|+n_{z}}$, $\mbs{\phi}_{j}=\bar{\mcl{T}}_{v,j}\mbs{\psi}_{j}(t)+T_{q}\mbf{q}(t)\in \mbf{Y}_{\mbf{q}(t)}$ will satisfy the BCs defined by $\mbf{G}_{\text{bc}}$.
	
	Let now $(\mbs{\psi}_{j},\mbf{r}_{j})$ be a solution to the PIE defined by $\mbf{G}_{\text{pie}}=\{\bar{\mcl{T}}_{v,j},T_{q},\bar{\mcl{A}}_{v,j},0,\bar{\mcl{C}}_{r,j},I_{n_q}\}$ with initial conditions $\mbs{\psi}_{0,j}\in L_{2}^{\|\enn{\text{p}}\|+n_{z}}$ and input $\mbf{q}=\smallbmat{\mscr{D}_{\text{int}}\mbf{q}_{u}(t)\\\Lambda_{\text{bf}}\mbf{q}_{u}(t)\\\mbf{q}_{z}(t)}$. Then
	\begin{align*}
		\partial_{t}\mbs{\phi}_{j}(t,s)&=\bl(\partial_{t}\bar{\mcl{T}}_{v,j}\mbs{\psi}_{j}\br)(t,s)+T_{q}\mbf{q}_{t}(t) \\
		&=\bl(\bar{\mcl{A}}_{v,j}\mbs{\psi}_{j}\br)(t,s) \\
		&=-\frac{1}{\tau_{j}}\srbmat{\mcl{T}\mbs{\psi}_{u,j}(t,s)+\mcl{T}_{z}\mbs{\psi}_{z,j}(t,s)\\\mbs{\psi}_{z,j}(t,s)} \\
		&=-\frac{1}{\tau_{j}}\srbmat{\partial_{s}\mbs{\phi}_{u,j}(t,s)\\\partial_{s}\mbs{\phi}_{z,j}(t,s)} = -(1/\tau_{j})~\partial_{s}\mbs{\phi}_{j}(t,s),
	\end{align*}
	and
	\begin{align*}
		&\mbf{r}_{j}(t)=\bl(\bar{\mcl{C}}_{r,j}\mbs{\psi}_{j}\br)(t)+\mbf{q}(t) \\
		&=\!
		\int_{0}^{1}\!\srbmat{\mcl{T}_{\text{int}}\mbs{\psi}_{u,j}(t,s)+\mcl{T}_{\text{int},z}\mbs{\psi}_{z,j}(t,s)\\\mcl{T}_{\text{bf}}\mbs{\psi}_{u,j}(t,s)+\mcl{T}_{\text{bf},z}\mbs{\psi}_{z,j}(t,s)\\\mbs{\psi}_{z,j}(t,s)}ds 
		+ \srbmat{\mscr{D}_{\text{int}}\mbf{q}_{u}(t)\\\Lambda_{\text{bf}}\mbf{q}_{u}(t)\\\mbf{q}_{z}(t)} \\
		&=\!
		\int_{0}^{1}\!\srbmat{\mscr{D}_{\text{int}}(\mcl{T}\mbs{\psi}_{u,j}+\mcl{T}_{z}\mbs{\psi}_{z,j})(t,s)\\\Lambda_{\text{bf}}(\mcl{T}\mbs{\psi}_{u,j}+\mcl{T}_{z}\mbs{\psi}_{z,j})(t,s)\\\mbs{\psi}_{z,j}(t,s)}ds 
		+ \srbmat{(\mscr{D}_{\text{int}}\mbs{\phi}_{u,j})(0)\\(\Lambda_{\text{bf}}\mbs{\phi}_{u,j})(0)\\\mbs{\phi}_{z,j}(0)} \\
		&=\!
		\int_{0}^{1}\!\srbmat{(\mscr{D}_{\text{int}}\partial_{s}\mbs{\phi}_{u,j})(t,s)\\(\Lambda_{\text{bf}}\partial_{s}\mbs{\phi}_{u,j})(t,s)\\\partial_{s}\mbs{\psi}_{z,j}(t,s)}ds 
		\!+\! \srbmat{(\mscr{D}_{\text{int}}\mbs{\phi}_{u,j})(0)\\(\Lambda_{\text{bf}}\mbs{\phi}_{u,j})(0)\\\mbs{\phi}_{z,j}(0)}
		=\!
		\srbmat{(\mscr{D}_{\text{int}}\mbs{\phi}_{u,j})(t,1)\\ (\Lambda_{\text{bf}}\mbs{\phi}_{u,j})(t,1)\\ \mbs{\phi}_{z,j}(t,1)},
	\end{align*}
	proving that $(\mbs{\phi}_{j},\mbf{r})$ satisfies the $j$th PDE~\eqref{eq:delayPDE_transport}. Using a similar derivation, it also follows that for any solution $(\mbs{\phi}_{j},\mbf{r})$ to the $j$th PDE, with $\bl(\bar{\mcl{T}}_{v,j}\mbs{\psi}_{j}\br)(t,s)+T_{q}\mbf{q}(t)$, $(\mbs{\psi}_{j},\mbf{r})$ is also a solution to the PIE defined by $\mbf{G}_{\text{pie}}$.
\end{proof}

Having shown that both the PDE~\eqref{eq:delayPDE_PDE} and the PDE~\eqref{eq:delayPDE_transport} can be equivalently represented as PIEs, we now take the feedback interconnection of these systems to obtain a PIE representation of the delayed PDE.

\begin{cor}\label{cor:PIEmap_DPDE}[PIE Representation of DPDE]
	Let $\mbf{G}_{\text{pdde}}$ and $\mbf{G}_{\text{bc}}$ as in~\eqref{eq:DPDE_params} define a well-posed system of PDEs as in~\eqref{eq:delayPDE_PDE} and~\eqref{eq:delayPDE_transport}. Let $\mbf{G}_{\text{pie},1}$ denote the operators defining the PIE associated to the PDE~\eqref{eq:delayPDE_PDE}, as in Lemma~\ref{lem:PIEmap:PDE}, and let $\mbf{G}_{\text{pie},2}$ denote the operators defining the PIE associated to the PDE~\eqref{eq:delayPDE_transport}, as in Lemma~\ref{lem:PIEmap:PDE_transport}. Finally, let $\{\mcl{T},\mcl{T}_{z},\mcl{A},\mcl{B}_{z},\mcl{C}_{w},\mcl{D}_{wz}\}=\mbf{G}_{\text{pie}}=\mcl{L}_{\text{pie}\times\text{pie}}(\mbf{G}_{\text{pie},1},\mbf{G}_{\text{pie},2})$, where the linear operator map $\mcl{L}_{\text{pie}\times\text{pie}}$ is as defined in Prop.~\ref{prop:PIE_interconnection_full}. Then, $(\smallbmat{\mbf{v}\\\mbs{\psi}},w)$ is a solution to the PIE defined by $\mbf{G}_{\text{pie}}$ with initial conditions $(\mbf{v}_{0},\mbs{\psi}_{0})\in L_2^{\|\enn{p}\|}\times \text{Z}_{2}^{K(n_z,\|\enn{p}\|)}$ and input $z$ if and only if $(\smallbmat{\mbf{u}\\\mbs{\phi}},w)$ with $\smallbmat{\mbf{u}\\\mbs{\phi}}=\mcl{T}\smallbmat{\mbf{v}\\\mbs{\psi}}+\mcl{T}_{z}z$ is a solution to the delayed PDE defined by $\{\mbf{G}_{\text{pdde}},\mbf{G}_{\text{bc}},\mbs{\tau}\}$ with initial conditions $\smallbmat{\mbf{u}_{0}\\\mbs{\phi}_{0}}=\mcl{T}\smallbmat{\mbf{v}_{0}\\\mbs{\psi}_{0}}+\mcl{T}_{z}$.
\end{cor}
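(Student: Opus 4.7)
The plan is to chain the PIE representations of the two subsystems of the expanded delayed PDE by invoking the feedback interconnection formula from Proposition~\ref{prop:PIE_interconnection_full}. The starting observation is that, by the very definition of a solution to the DPDE, the pair $(\mbf{u},\mbs{\phi})$ together with $w$ solves the system defined by $\{\mbf{G}_{\text{pdde}},\mbf{G}_{\text{bc}},\mbs{\tau}\}$ if and only if there exist interconnection signals $(\mbf{q},\mbf{r})$ such that $(\mbf{u},w,\mbf{q})$ solves the 1D PDE~\eqref{eq:delayPDE_PDE} with inputs $(z,\mbf{r})$ and, for each $j\in\{1,\ldots,K\}$, $(\mbs{\phi}_{j},\mbf{r}_{j})$ solves the $j$th 2D transport PDE~\eqref{eq:delayPDE_transport} with input $\mbf{q}$. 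The interconnection relations read $\mbf{q}=\mcl{C}_{q}\mbf{v}+\mcl{D}_{qz}z$ and $\mbf{r}_{j}=\bar{\mcl{C}}_{r,j}\mbs{\psi}_{j}+\mbf{q}$, which are precisely of the form required by~\eqref{eq:qr_signals}.

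First, I would invoke Lemma~\ref{lem:PIEmap:PDE} to replace the 1D PDE subsystem by the PIE $\mbf{G}_{\text{pie},1}$ acting on the fundamental state $\mbf{v}=\mscr{D}\mbf{u}$, using the invertible correspondence $\mbf{u}=\mcl{T}\mbf{v}+\mcl{T}_{z}z$ between PDE state and fundamental state. Next, I would aggregate the $K$ transport subsystems, applying Lemma~\ref{lem:PIEmap:PDE_transport} to each in turn to obtain a single block-diagonal PIE $\mbf{G}_{\text{pie},2}$ acting on the stacked fundamental state $\mbs{\psi}=(\mbs{\psi}_{1},\ldots,\mbs{\psi}_{K})$, under the per-channel correspondence $\mbs{\phi}_{j}=\bar{\mcl{T}}_{v,j}\mbs{\psi}_{j}+T_{q}\mbf{q}$. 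Both lemmas also supply the equivalence of initial conditions: $\mbf{v}_{0}=\mscr{D}\mbf{u}_{0}$ and $\mbs{\psi}_{0,j}=\bar{\mscr{D}}_{v}\mbs{\phi}_{0,j}$, with inverses $\mbf{u}_{0}=\mcl{T}\mbf{v}_{0}+\mcl{T}_{z}z(0)$ and $\mbs{\phi}_{0,j}=\bar{\mcl{T}}_{v,j}\mbs{\psi}_{0,j}+T_{q}\mbf{q}(0)$, so no information is lost in the change of variables.

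At this point, $\mbf{G}_{\text{pie},1}$ and $\mbf{G}_{\text{pie},2}$ exchange the signals $\mbf{q}$ and $\mbf{r}$ in exactly the format required by Proposition~\ref{prop:PIE_interconnection_full}. Applying that proposition yields the PIE $\mbf{G}_{\text{pie}}=\mcl{L}_{\text{pie}\times\text{pie}}(\mbf{G}_{\text{pie},1},\mbf{G}_{\text{pie},2})$, whose solutions $\bl(\smallbmat{\mbf{v}\\\mbs{\psi}},w\br)$ are in bijection with joint solutions of the two PIE subsystems closed with the interconnection relations~\eqref{eq:qr_signals}. Composing the three bijections (1D PDE $\leftrightarrow$ PIE$_1$, 2D PDE $\leftrightarrow$ PIE$_2$, and subsystem feedback $\leftrightarrow$ joint PIE) then yields the claimed one-to-one correspondence between DPDE solutions $\bl(\smallbmat{\mbf{u}\\\mbs{\phi}},w\br)$ and interconnected PIE solutions via $\smallbmat{\mbf{u}\\\mbs{\phi}}=\mcl{T}\smallbmat{\mbf{v}\\\mbs{\psi}}+\mcl{T}_{z}z$. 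The converse direction is handled by the same chain traversed in reverse, using that $\mscr{D}$ and $\bar{\mscr{D}}_{v}$ are left inverses of $\mcl{T}$ and $\bar{\mcl{T}}_{v,j}$ on the appropriate domains.

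The main obstacle I anticipate is purely bookkeeping rather than content. Proposition~\ref{prop:PIE_interconnection_full} is stated for a single pair of subsystems, so the $K$ transport PIEs supplied by Lemma~\ref{lem:PIEmap:PDE_transport} must be stacked into one PIE whose input $\mbf{q}$ is broadcast to all channels via block copies of $T_{q}$ and whose output $\mbf{r}$ is the concatenation of the $\mbf{r}_{j}$; one should verify that this aggregation preserves the PI-operator structure. A secondary subtlety is that the external input $z$ enters both subsystems (directly via $\mcl{T}_{z},\mcl{B}_{z},\mcl{D}_{wz}$ in $\mbf{G}_{\text{pie},1}$, and indirectly through $\mbf{q}=\mcl{C}_{q}\mbf{v}+\mcl{D}_{qz}z$ into $\mbf{G}_{\text{pie},2}$), so care is needed to check that the resulting formulas from $\mcl{L}_{\text{pie}\times\text{pie}}$ correctly collect all contributions into the final $\{\mcl{T},\mcl{T}_{z},\mcl{A},\mcl{B}_{z},\mcl{C}_{w},\mcl{D}_{wz}\}$ reported in the corollary.
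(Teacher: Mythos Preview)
Your proposal is correct and follows essentially the same approach as the paper: invoke Lemma~\ref{lem:PIEmap:PDE} to pass from the 1D PDE~\eqref{eq:delayPDE_PDE} to the PIE $\mbf{G}_{\text{pie},1}$, invoke Lemma~\ref{lem:PIEmap:PDE_transport} (aggregated over $j$) to pass from the transport system~\eqref{eq:delayPDE_transport} to the PIE $\mbf{G}_{\text{pie},2}$, and then apply Proposition~\ref{prop:PIE_interconnection_full} to identify solutions of the interconnected PIE $\mbf{G}_{\text{pie}}=\mcl{L}_{\text{pie}\times\text{pie}}(\mbf{G}_{\text{pie},1},\mbf{G}_{\text{pie},2})$ with joint solutions of the two subsystems under~\eqref{eq:qr_signals}. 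The paper's proof does exactly this chaining, and the bookkeeping concerns you raise (stacking the $K$ transport channels and tracking how $z$ feeds through $\mcl{D}_{qz}$) are indeed the only nontrivial details, which the paper handles implicitly by treating $\mbf{G}_{\text{pie},2}$ as the block-diagonal PIE on the full state $\mbs{\psi}$.
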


%\begin{proof}
%	Lemmas~\ref{lem:PIEmap:PDE} and~\ref{lem:PIEmap:PDE_transport} define equivalent PIE representation for the PDEs~\eqref{eq:delayPDE_PDE} and~\eqref{eq:delayPDE_transport}, respectively. By Proposition~\ref{prop:PIE_interconnection_full}, the feedback interconnection of these PIEs is also a PIE, defining an equivalent representation of the feedback interconnection of the PDEs~\eqref{eq:delayPDE_PDE} and~\eqref{eq:delayPDE_transport}. A full proof is given in the arXiv version (\cite{jagt2023PIE_DPDE_Arxiv}).
%\end{proof}

\begin{proof}
	By Lemma~\ref{lem:PIEmap:PDE}, $(\mbf{v},w,\mbf{q})$ is a solution to the PIE defined by $\{\mcl{T}_{1},\mcl{T}_{z,1},0,\mcl{A},\hdots\}=\mbf{G}_{\text{pie},1}$ with inputs $(z,\mbf{r})$ and initial conditions $\mbf{v}_{0}$ if and only if $(\mcl{T}_{1}\mbf{v}+\mcl{T}_{z,1}z,w,\mbf{q})$ is a solution to the PDE~\eqref{eq:delayPDE_PDE} defined by $\{\mbf{G}_{\text{pdde}},\mbf{G}_{\text{bc}}\}$ with initial conditions $\mcl{T}_1\mbf{v}_{0}+\mcl{T}_{z,1}z(0)$ and inputs $(z,\mbf{r})$.
	Similarly, by Lemma~\ref{lem:PIEmap:PDE_transport}, $(\mbs{\psi},\mbf{r})$ is a solution to the PIE defined by $\{\mcl{T}_{2},\mcl{T}_{q,2},\hdots\}=\mbf{G}_{\text{pie},2}$ with input $\mbf{q}$ and initial conditions $\mbs{\psi}_{0}$ if and only if $(\mcl{T}_{2}\mbs{\psi}+\mcl{T}_{q,2}\mbf{q},\mbf{r})$ is a solution to the PDE~\eqref{eq:delayPDE_transport} defined by $\{\mbs{\tau},\mbf{G}_{\text{bc}}\}$ with initial conditions $\mcl{T}_2\mbs{\psi}_{0}+\mcl{T}_{q,2}\mbf{q}(0)$ and input $\mbf{q}$. By Prop.~\ref{prop:PIE_interconnection_full}, it follows that $(\smallbmat{\mbf{v}\\\mbs{\psi}},w)$ is a solution to the PIE defined by $\mbf{G}_{\text{pie}}=\mcl{L}_{\text{pie}\times\text{pie}}(\mbf{G}_{\text{pie},1},\mbf{G}_{\text{pie},2})$ with initial conditions $\smallbmat{\mbf{v}_{0}\\\mbs{\psi}_{0}}$ if and only if $(\mbf{u},\mbf{q},w)$ and $(\mbs{\phi},\mbf{r})$ are solutions to the PDEs~\eqref{eq:delayPDE_PDE} and~\eqref{eq:delayPDE_transport} defined by $\{\mbf{G}_{\text{pdde}},\mbf{G}_{\text{bc}},\mbs{\tau}\}$, with initial conditions $\mbf{u}_{0}$ and $\mbs{\phi}_{0}$ and inputs $(z,\mbf{r})$ and $\mbf{q}$, where $\smallbmat{\mbf{u}\\\mbs{\phi}}=\mcl{T}\smallbmat{\mbf{v}\\\mbs{\psi}}+\mcl{T}_{z}z$ and $\smallbmat{\mbf{u}_{0}\\\mbs{\phi}_{0}}=\mcl{T}\smallbmat{\mbf{v}_{0}\\\mbs{\psi}_{0}}+\mcl{T}_{z}z$.
	
\end{proof}

%\begin{proof}
%	By Lemma~\ref{lem:PIEmap:PDE}, we know that for any signal $\mbf{r}$, $(\mbf{v}_{\text{p}},z,\mbf{q})$ is a solution to the PIE defined by $\{\hat{T}_{1},\hat{T}_{1,w},\hdots\}=\mbf{G}_{\text{pie},1}$ if and only if $(\mbf{u}_{\text{p}},z,\mbf{q})$ with $\mbf{u}_{\text{p}}=\mcl{P}[\hat{T}_{1}]\mbf{v}_{\text{p}}+\mcl{P}[\hat{T}_{w}]w$ is a solution to the PDE~\eqref{eq:delayPDE_PDE}. Similarly, by Lemma~\ref{lem:PIEmap:PDE_transport}, we know that for any signal $\mbf{q}$, $(\mbs{\psi}_{j},\mbf{r}_{j})$ is a solution to the PIE~\eqref{eq:delayPDE_transportPIE} if and only if $(\mbs{\phi}_{j},\mbf{r}_{j})$ with $\mbs{\phi}_{j}=\mcl{P}[\hat{T}_{1}]\mbf{v}_{\text{p}}+\mcl{P}[\hat{T}_{w}]w$ is a solution to the PDE~\eqref{eq:delayPDE_PDE}
%	
%\end{proof}

\paragraph*{\textbf{Example}}
For the delayed PDE~\eqref{eq:example_DPDE}, we define
\begin{align*}
	\srbmat{\mbs{\phi}_{u}(t,s,x)\\\mbs{\phi}_{z}(t,s)}\!:=\!\srbmat{\mbf{u}(t\!-\!\tau s,x)\\z(t\!-\!\tau s)},\quad
	\srbmat{\mbs{\psi}_{u}(t,s,x)\\\mbs{\psi}_{z}(t,s)}\!:=\!\srbmat{\partial_{s}\partial_{x}^2\mbs{\phi}(t,s,x)\\\partial_{s}\mbs{\phi}_{z}(t,s)}.
\end{align*}
Letting $\mcl{T},\mcl{T}_{z}$ be as in~\eqref{eq:example_Tmap_DPDE}, the states $(\mbs{\phi}_{u},\mbs{\phi}_{z})$ must satisfy
\begin{align*}
	\srbmat{\mbs{\phi}_{u}(t,s)\\\mbs{\phi}_{z}(t,s)}&=\srbmat{\mbf{u}(t)\\z(t)}+\int_{0}^{s}\srbmat{(\partial_{s}\mbs{\phi}_{u})(t,\nu)\\(\partial_{s}\mbs{\phi}_{z})(t,\nu)}d\nu \\
	&=\srbmat{\mcl{T}\mbf{v}(t)+\mcl{T}_{z}z(t)\\z(t)}+\int_{0}^{s}\srbmat{\mcl{T}\mbs{\psi}_{u}(t,\nu)+\mcl{T}_{z}\mbs{\psi}_{z}(t,\nu)\\\mbs{\psi}_{z}(t,\nu)}d\nu.
\end{align*}
Imposing this relation, as well as $\mbf{u}=\mcl{T}\mbf{v}+\mcl{T}_{z}z$, the delayed PDE~\eqref{eq:example_DPDE} can be equivalently represented as a PIE
\begin{align*}
	&\mcl{T}_{z}\dot{z}(t)+\mcl{T}\mbf{v}_{t}(t)=\mbf{v}(t)+7\mcl{T}_{z}z(t)+7\mcl{T}\mbf{v}(t,x)\\
	&\hspace*{3.0cm} -3\int_{0}^{1}\bl[\mcl{T}_{z}\mbs{\psi}_{z}(t,s)+\mcl{T}\mbs{\psi}_{u}(t,s)\br]ds, \\
	&\srbmat{\mcl{T}\mbf{v}_{t}(t)+\mcl{T}_{z}\dot{z}(t)\\\dot{z}(t)}+\int_{0}^{s}\partial_{t}\srbmat{\mcl{T}\psi_{u}(t,\nu)+\mcl{T}_{z}\mbs{\psi}_{z}(t,\nu)\\\mbs{\psi}_{z}(t,\nu)}d\nu\\
	&\hspace*{3.75cm}=-\frac{1}{\tau}\srbmat{\mcl{T}\mbs{\psi}_{u}(t,s)+\mcl{T}_{z}\mbs{\psi}_{z}(t,\nu)\\\mbs{\psi}_{z}(t,s)}.
\end{align*}

\subsection{A PIE Representation of ODE-PDEs with Delays}\label{appx:PIEmap:ODEPDEdelay}

Having derived a PIE representation of both ODEs and PDEs with delay, we now take the interconnection of these PIEs, to derive a PIE representation of an ODE-PDE,
\begin{align}\label{eq:delayODEPDE}
	\srbmat{\dot{u}(t)\\z(t)}&=
	\left[\!{\small
		\begin{array}{ll}
			A&B_{w}\\
			C_{z}&0
	\end{array}}\!
	\right]
	\srbmat{u(t)\\w(t)} +
	\sum_{j=1}^{K}
	\left[\!{\small
		\begin{array}{ll}
			A_{j}	\\
			C_{z,j}
	\end{array}}\!
	\right]u(t-\tau_{j}), \\
	\srbmat{\partial_{t}\mbf{u}_{\text{p}}(t)\\w(t)}&=
	\left[\!{\small
		\begin{array}{lll}
			A_{\text{p}}& A_{\text{b}} & B_{z}\\
			\smallint_{a}^{b}[C_{w\text{p}}] & C_{w\text{b}} & B_{wz}
	\end{array}}\!	
	\right]
	\srbmat{\bl(\mscr{D}_{\text{int}}\mbf{u}_{\text{p}}\br)(t)\\\bl(\Lambda_{\text{bf}}\mbf{u}_{\text{p}}\br)(t)\\ z(t)} \\[-0.2em]
	&\hspace*{0.5cm}+\sum_{j=1}^{K}	
	\left[\!{\small
		\begin{array}{ll}
			A_{\text{p},j}& A_{\text{b},j}\\
			\smallint_{a}^{b}[C_{w\text{p},j}] & C_{w\text{b},j}
	\end{array}}\!	
	\right]\!
	\srbmat{\bl(\mscr{D}_{\text{int}}\mbf{u}_{\text{p}}\br)(t-\tau_{j})\\\bl(\Lambda_{\text{bf}}\mbf{u}_{\text{p}}\br)(t-\tau_{j})},	\notag\\[-0.4em]
	%&\text{BCs}	&
	\text{with BCs}\hspace*{-0.1cm}&\qquad 0=\left[\!{\small
		\begin{array}{lll}
			\smallint_{a}^{b}[E_{\text{p}}] & E_{\text{b}} & E_{z}
		\end{array}\!}
	\right]
	\srbmat{\mscr{D}_{\text{int}}\mbf{u}_{\text{p}}(t)\\\Lambda_{\text{bf}}\mbf{u}_{\text{p}}(t)\\ z(t)}, \notag
\end{align}
defined by $\{\mbf{G}_{\text{dde}},\mbf{G}_{\text{pdde}},\mbf{G}_{\text{bc}}\}$ as before. 

\paragraph*{\textbf{Example}}
Taking the interconnection of the delayed ODE~\eqref{eq:example_DDE} with the delayed PDE~\eqref{eq:example_DPDE}, we obtain a system
\begin{align}\label{eq:example_DDE-DPDE}
	\dot{u}(t)&=-u(t)+u(t-\tau), \\
	\mbf{u}_{t}(t,x)&=\mbf{u}_{xx}(t,x)+10\mbf{u}(t,x)-3\mbf{u}(t-\tau,x),	&	x&\in\Omega_{0}^{1}, \notag\\
	\mbf{u}(t,0)&=0,\hspace*{1.0cm} \mbf{u}(t,1)=u(t). \notag
\end{align}

\begin{defn}[Solution to the DDE-DPDE]
	For given initial conditions $(u_{0},\mbs{\phi}_{0})\in\R^{n_u}\times \bar{Y}_{u_0}^{K}$ and $(\mbf{u}_{\text{p},0},\mbs{\phi}_{p,0})\in X_{w}^{\enn{p}}\times \bar{\mbf{Y}}_{\mbf{q}_0}^{K}$, where $\mbf{q}_{0}=\smallbmat{\mscr{D}_{\text{int}}\mbf{u}_{\text{p},0}(t)\\\Lambda_{\text{bf}}\mbf{u}_{\text{p},0}(t)\\C_{z}u_0+C_{z\text{d}}\mbs{\phi}_0(1)}$ we say that $(u,\mbs{\phi},\mbf{u}_{\text{p}},\mbs{\phi}_{\text{p}})$ is a solution to the delayed ODE-PDE system defined by $\{\mbf{G}_{\text{dde}},\mbf{G}_{\text{pdde}},\mbf{G}_{\text{bc}},\mbs{\tau}\}$ if $((u,\mbs{\phi}),z)$ is a solution to the ODE with delay defined by $\{\mbf{G}_{\text{dde}},\mbs{\tau}\}$ with initial conditions $(u_{0},\mbs{\phi}_{0})$ and input $w$, and $((\mbf{u}_{\text{p}},\mbs{\phi}_{\text{p}}),w)$ is a solution to the PDE with delay defined by $\{\mbf{G}_{\text{pdde}},\mbf{G}_{\text{bc}},\mbs{\tau}\}$ with initial conditions $(\mbf{u}_{\text{p},0},\mbs{\phi}_{\text{p},0})$ and input $z$.
\end{defn}

Having derived PIE representations associated to both delayed ODEs and delayed PDEs, a PIE representation for the delayed ODE-PDE interconnection~\eqref{eq:delayODEPDE} can be obtained by simply taking the interconnection of the PIE representations of each subsystem. This PIE will model the dynamics of a fundamental state $\mbf{v}\in\text{Z}_{12}^{(n_u,Kn_u,\|\enn{p}\|,K\|\enn{p}\|)}$, defined as \vspace*{-0.3cm}
\begin{align*}
	\mbf{v}=\srbmat{\hat{u}\\\mbs{\phi}\\\mbf{u}_{\text{p}}\\\mbs{\phi}_{\text{p}}}=
	\overbrace{\left[\!
		\begin{array}{llll}
			I\\&\partial_{s}\\&&\mscr{D}\\&&&\bar{\mscr{D}}_{v}
		\end{array}\!
		\right]}^{\bar{\mscr{D}}_{u}}
	\srbmat{u\\\mbs{\phi}\\\mbf{u}_{\text{p}}\\\mbs{\phi}_{\text{p}}}=\bar{\mscr{D}}_{u}\mbf{u},
\end{align*}
where $\mscr{D}$ and $\bar{\mscr{D}}_{v}$ are as in~\eqref{eq:Dmap_uPDE} and~\eqref{eq:Dmap_vPDE}, respectively.

\begin{cor}[PIE Representation of Delayed ODE-PDE]\label{cor:PIEmap_DDEDPDE}
	Let $\bar{\mbf{G}}_{\text{dde-pdde}}=\{\mbf{G}_{\text{dde}},\mbf{G}_{\text{pdde}},\mbf{G}_{\text{bc}},\mbs{\tau}\}$ define an ODE-PDE system with delay as in~\eqref{eq:delayODEPDE}. Let $\mbf{G}_{\text{pie},1}$ denote the parameters defining the PIE associated to the delayed ODE defined by $\{\mbf{G}_{\text{dde}},\mbs{\tau}\}$, as in Cor.~\ref{cor:PIEmap_DDE}. Let further $\mbf{G}_{\text{pie},2}$ denote the parameters defining the PIE associated to the delayed PDE defined by $\{\mbf{G}_{\text{pdde}},\mbf{G}_{\text{bc}},\mbs{\tau}\}$, as in Cor.~\ref{cor:PIEmap_DPDE}. Finally, let $\{\mcl{T},\mcl{A}\}=\mbf{G}_{\text{pie}}=\mcl{L}_{\text{pie}\times\text{pie}}(\mbf{G}_{\text{pie},1},\mbf{G}_{\text{pie},2})$, where the linear operator map $\mcl{L}_{\text{pie}\times\text{pie}}$ is as defined in Prop.~\ref{prop:PIE_interconnection_full}. Then, $\mbf{v}:=(\hat{u},\mbs{\psi},\mbf{v}_{\text{p}},\mbs{\psi}_{\text{p}})$ is a solution to the PIE defined by $\mbf{G}_{\text{pie}}$ with initial conditions $\mbf{v}_{0}=(\hat{u}_{0},\mbs{\psi}_{0},\mbf{v}_{\text{p},0},\mbs{\psi}_{\text{p},0})\in \R^{n_u}\times L_2^{Kn_u}[\Omega_{0}^{1}]\times L_2^{\|\enn{p}\|}[\Omega_{a}^{b}]\times L_{2}^{K\|\enn{p}\|}[\Omega_{0a}^{1b}]$ if and only if  $\mcl{T}\mbf{v}$
	is a solution to the ODE-PDE with delay defined by $\bar{\mbf{G}}_{\text{dde-pdde}}$ with initial conditions $\mcl{T}\mbf{v}_{0}$.
\end{cor}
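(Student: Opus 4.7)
The plan is to view Cor.~\ref{cor:PIEmap_DDEDPDE} as a straightforward application of the three tools that have just been developed: the PIE representation of the delayed ODE (Cor.~\ref{cor:PIEmap_DDE}), the PIE representation of the delayed PDE (Cor.~\ref{cor:PIEmap_DPDE}), and the interconnection formula for PIEs (Prop.~\ref{prop:PIE_interconnection_full}). The key observation is that, by the very definition of a solution to the coupled delayed ODE-PDE~\eqref{eq:delayODEPDE}, the pair $((u,\mbs{\phi}),(\mbf{u}_{\text{p}},\mbs{\phi}_{\text{p}}))$ solves the ODE-PDE precisely when $(u,\mbs{\phi})$ solves the delayed ODE $\{\mbf{G}_{\text{dde}},\mbs{\tau}\}$ driven by input $w$ and output $z$, and $(\mbf{u}_{\text{p}},\mbs{\phi}_{\text{p}})$ solves the delayed PDE $\{\mbf{G}_{\text{pdde}},\mbf{G}_{\text{bc}},\mbs{\tau}\}$ driven by input $z$ and output $w$, with the two interconnection signals $z,w$ shared. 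This already places us in the setting of a feedback interconnection between the two subsystems.

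First, I would fix an initial condition $\mbf{v}_{0}=(\hat{u}_{0},\mbs{\psi}_{0},\mbf{v}_{\text{p},0},\mbs{\psi}_{\text{p},0})$ and suppose $\mbf{v}=(\hat{u},\mbs{\psi},\mbf{v}_{\text{p}},\mbs{\psi}_{\text{p}})$ solves the PIE defined by $\mbf{G}_{\text{pie}}=\mcl{L}_{\text{pie}\times\text{pie}}(\mbf{G}_{\text{pie},1},\mbf{G}_{\text{pie},2})$. Applying Prop.~\ref{prop:PIE_interconnection_full}, this is equivalent to saying that $(\hat{u},\mbs{\psi})$ and $(\mbf{v}_{\text{p}},\mbs{\psi}_{\text{p}})$ solve the PIEs defined by $\mbf{G}_{\text{pie},1}$ and $\mbf{G}_{\text{pie},2}$ respectively, with the associated input/output signals $z$ and $w$ given by the interconnection formulas~\eqref{eq:qr_signals}. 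Now invoking Cor.~\ref{cor:PIEmap_DDE} on the first subsystem, this is in turn equivalent to $(u,\mbs{\phi})=\mcl{T}_{1}(\hat{u},\mbs{\psi})$ being a solution to the delayed ODE $\{\mbf{G}_{\text{dde}},\mbs{\tau}\}$ with input $w$ and initial condition $(u_{0},\mbs{\phi}_{0})=\mcl{T}_{1}(\hat{u}_{0},\mbs{\psi}_{0})$. Likewise, Cor.~\ref{cor:PIEmap_DPDE} applied to the second subsystem yields that $(\mbf{u}_{\text{p}},\mbs{\phi}_{\text{p}})=\mcl{T}_{2}(\mbf{v}_{\text{p}},\mbs{\psi}_{\text{p}})+\mcl{T}_{z,2}z$ solves the delayed PDE $\{\mbf{G}_{\text{pdde}},\mbf{G}_{\text{bc}},\mbs{\tau}\}$ with input $z$ and initial condition $\mcl{T}_{2}(\mbf{v}_{\text{p},0},\mbs{\psi}_{\text{p},0})+\mcl{T}_{z,2}z(0)$. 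Concatenating these two facts and recognizing that the composite state $(u,\mbs{\phi},\mbf{u}_{\text{p}},\mbs{\phi}_{\text{p}})$ is exactly $\mcl{T}\mbf{v}$ by the definition of $\mcl{T}$ in Prop.~\ref{prop:PIE_interconnection_full}, we conclude the forward implication. The reverse implication is identical, traversing each biconditional in the opposite direction.

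The only real subtlety, and what I expect to be the main obstacle to write out cleanly, is bookkeeping the interconnection signals $w$ and $z$: in the ODE subsystem $z$ is an output and $w$ is an input, while in the PDE subsystem the roles are reversed, and moreover the PDE subsystem has its own internal input/output pair $(\mbf{q},\mbf{r})$ that is hidden by Cor.~\ref{cor:PIEmap_DPDE}. One must check that the three-layer application of Prop.~\ref{prop:PIE_interconnection_full} (first internal to the delayed PDE, then between the delayed ODE and delayed PDE) associates correctly, so that $\mcl{L}_{\text{pie}\times\text{pie}}$ applied to the already-interconnected $\mbf{G}_{\text{pie},2}$ together with $\mbf{G}_{\text{pie},1}$ really yields the PIE whose $\mcl{T}$ block-maps $\mbf{v}$ to $(u,\mbs{\phi},\mbf{u}_{\text{p}},\mbs{\phi}_{\text{p}})$. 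Once this associativity is verified (which follows from the algebraic identities underlying Prop.~\ref{prop:PIE_interconnection_full} and the fact that the $\mcl{D}_{qw}$-type feedthrough between the ODE and PDE subsystems is zero by construction), the result is immediate from the chain of equivalences above.
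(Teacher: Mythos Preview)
Your proposal is correct and follows essentially the same approach as the paper: apply Prop.~\ref{prop:PIE_interconnection_full} to split the interconnected PIE into the two subsystem PIEs, then invoke Cor.~\ref{cor:PIEmap_DDE} and Cor.~\ref{cor:PIEmap_DPDE} to pass to the delayed ODE and delayed PDE solutions, and finally identify the concatenated state with $\mcl{T}\mbf{v}$. Your concern about a ``three-layer'' associativity is unnecessary, since $\mbf{G}_{\text{pie},2}$ from Cor.~\ref{cor:PIEmap_DPDE} already has the internal $(\mbf{q},\mbf{r})$ interconnection absorbed, so only a single application of Prop.~\ref{prop:PIE_interconnection_full} is required at this stage.
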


\begin{proof}
	Let $\mbf{v}:=(\hat{u},\mbs{\psi},\mbf{v}_{\text{p}},\mbs{\psi}_{\text{p}})$ be an arbitrary solution to the PIE defined by $\mbf{G}_{\text{pie}}$ with initial conditions $\mbf{v}_{0}=(\hat{u}_{0},\mbs{\psi}_{0},\mbf{v}_{\text{p},0},\mbs{\psi}_{\text{p},0})\in \R^{n_u}\times L_2^{Kn_u}[\Omega_{0}^{1}]\times L_2^{\|\enn{p}\|}[\Omega_{a}^{b}]\times L_{2}^{K\|\enn{p}\|}[\Omega_{0a}^{1b}]$. By Prop.~\ref{prop:PIE_interconnection_full}, it follows that there exist signals $z$ and $w$ such that $\bl(\smallbmat{\hat{u}\\\mbs{\psi}},z\br)$ and $\bbl(\smallbmat{\mbf{v}_{\text{p}}\\\mbs{\psi}_{\text{p}}},w\bbr)$ are solutions to the PIEs defined by $\mbf{G}_{\text{pie},1}$ and $\mbf{G}_{\text{pie},2}$ with initial conditions $\smallbmat{\hat{u}_{0}\\\mbs{\psi}_{0}}$ and $\smallbmat{\mbf{v}_{\text{p},0}\\\mbs{\psi}_{\text{p},0}}$ and inputs $w$ and $z$, respectively. Let $\mbf{G}_{\text{pie},1}=\{\mcl{T}_{1},\mcl{T}_{1w},\hdots\}$ and $\mbf{G}_{\text{pie},2}=\{\mcl{T}_{2},\mcl{T}_{2z},\hdots\}$. Then, by Corollary~\ref{cor:PIEmap_DDE},  $\bl(\mcl{T}_{1}\smallbmat{\hat{u}\\\mbs{\psi}}+\mcl{T}_{1w}w,z\br)$ is a solution to the DDE defined by $\{\mbf{G}_{\text{dde}},\mbs{\tau}\}$ with initial conditions $\mcl{T}_{1}\smallbmat{\hat{u}_{0}\\\mbs{\psi}_{0}}+\mcl{T}_{1w}w(0)$ and input $w$. Similarly, by Corollary~\ref{cor:PIEmap_DPDE}, $\bbl(\mcl{T}_{2}\smallbmat{\mbf{v}_{\text{p}}\\\mbs{\psi}_{\text{p}}}+\mcl{T}_{2z}z,w\bbr)$ is a solution to the PDE with delay defined by $\{\mbf{G}_{\text{pdde}},\mbf{G}_{\text{bc}},\mbs{\tau}\}$ with initial conditions $\mcl{T}_{2}\smallbmat{\mbf{v}_{\text{p},0}\\\mbs{\psi}_{\text{p},0}}+\mcl{T}_{2z}z(0)$ and input $z$. By definition of the interconnection signals $w$ and $z$, as well as the PI operator $\mcl{T}$, we further have that
	\begin{align*}
		\mcl{T}\mbf{v}\!:=\!\srbmat{
			\mcl{T}_{1}\smallbmat{\hat{u}\\\mbs{\psi}}+\!\mcl{T}_{1w}w\\
			\mcl{T}_{2}\smallbmat{\mbf{v}_{\text{p}}\\\mbs{\psi}_{\text{p}}}+\!\mcl{T}_{2z}z}\!, & &\text{and} & &
		\mcl{T}\mbf{v}_{0}\!=\!\srbmat{\mcl{T}_{1}\smallbmat{\hat{u}_{0}\\\mbs{\psi}_{0}}+\!\mcl{T}_{1w}w(0)\\
			\mcl{T}_{2}\smallbmat{\mbf{v}_{\text{p},0}\\\mbs{\psi}_{\text{p},0}}+\!\mcl{T}_{2z}z(0)}\!.
	\end{align*}
	Combining these results, it follows that $\mcl{T}\mbf{v}$ is a solution to the ODE-PDE with delay defined by $\bar{\mbf{G}}_{\text{dde}-\text{pdde}}$ with initial conditions $\mcl{T}\mbf{v}_{0}$.
	
	Conversely, let now $\smallbmat{\mbf{u}_{1}\\\mbf{u}_{2}}:=\mcl{T}\smallbmat{\mbf{v}_{1}\\\mbf{v}_{2}}$ be a solution to the ODE-PDE with delay defined by $\bar{\mbf{G}}_{\text{dde}-\text{pdde}}$ with initial conditions $\smallbmat{\mbf{u}_{1,0}\\\mbf{u}_{2,0}}:=\mcl{T}\smallbmat{\mbf{v}_{1,0}\\\mbf{v}_{2,0}}$. Then, there exist interconnection signals $z$ and $w$ such that $(\mbf{u}_{1},z)$ is a solution to the DDE defined by $\{\mbf{G}_{\text{dde}},\mbs{\tau}\}$ with initial conditions $\mbf{u}_{1}$ and input $w$, and $(\mbf{u}_{2},w)$ is a solution to the delayed PDE defined by $\{\mbf{G}_{\text{pdde}},\mbf{G}_{\text{bc}},\mbs{\tau}\}$ with initial conditions $\mbf{u}_{2}$ and input $z$. Letting $\mbf{G}_{\text{pie},1}=\{\mcl{T}_{1},\mcl{T}_{1w},\hdots\}$, by definition of the operator $\mcl{T}$ and Corollary~\ref{cor:PIEmap_DDE}, it follows that $\mbf{v}_{1}$ is a solution to the PIE defined by $\mbf{G}_{\text{pie},1}$ with initial conditions $\mbf{v}_{1,0}$ and input $w$. Similarly, letting $\mbf{G}_{\text{pie},2}=\{\mcl{T}_{2},\mcl{T}_{2z},\hdots\}$, by definition of the operator $\mcl{T}$ and Corollary~\ref{cor:PIEmap_DPDE}, it follows that $\mbf{v}_{2}$ is a solution to the PIE defined by $\mbf{G}_{\text{pie},2}$ with initial conditions $\mbf{v}_{2,0}$ and input $z$. Finally, by Proposition~\ref{prop:PIE_interconnection_full}, $\mbf{v}:=\smallbmat{\mbf{v}_{1}\\\mbf{v}_{2}}$ is a solution to the PIE defined by $\mbf{G}_{\text{pie}}$ with initial conditions $\smallbmat{\mbf{v}_{1,0}\\\mbf{v}_{2,0}}$.
\end{proof}

%\vspace*{-0.6cm}
\paragraph*{\textbf{Example}}
Letting $\mbf{v}(t,x)=\partial_{x}^2\mbf{u}(t,x)$, $\mbs{\psi}_{z}(t,s)=\partial_{s}\mbs{\phi}_{z}(t,s)$ and $\mbs{\psi}_{u}(t,s,x)=\partial_{s}\partial_{x}^2\mbs{\phi}_{u}(t,s,x)$, where $\mbs{\phi}_{z}(t,s)=u(t-\tau s)$ and $\mbs{\phi}_{u}(t,s,x)=\mbf{u}(t-\tau s,x)$, the delayed ODE-PDE~\eqref{eq:example_DDE-DPDE} can be equivalently represented as a PIE as

	\begin{align*}%\label{eq:example_PIE}
		&\dot{u}(t) = \int_{0}^{1}\mbs{\psi}_{z}(t,s)ds	\\
		&\dot{u}(t)+\!\!\int_{0}^{s}\!\partial_{t}\mbs{\psi}_{z}(t,\nu)d\nu
		\!=\!-\frac{1}{\tau}\mbs{\psi}_{z}(t,s), \\
		&\mcl{T}_{z}\dot{u}(t)\!+\!\mcl{T}\mbf{v}_{t}(t)	\\
		&\hspace*{0.5cm}=7\mcl{T}_{z}u(t)\!+\![1\!+\!7\mcl{T}]\mbf{v}(t)\!-\!3\!\int_{0}^{1}\![\mcl{T}_{z}\mbs{\psi}_{z}(t,\!s)\!+\!\mcl{T}\mbs{\psi}_{u}(t,\!s)]ds, \notag\\
		&\mcl{T}_{z}\dot{u}(t)\!+\!\mcl{T}\mbf{v}_{t}(t)\!+\!\!\!\int_{0}^{s}\!\!\![\partial_{t}\mcl{T}_{z}\mbs{\phi}_{z}(t,\!\nu)\!+\!\partial_{t}\mcl{T}\mbs{\psi}_{u}(t,\!\nu)]d\nu 	\\
		&\hspace*{3.0cm}=-\frac{[\mcl{T}_{z}\mbs{\phi}_{z}(t,\!s)\!+\!\mcl{T}\mbs{\psi}_{u}(t,\!s)]}{\tau}.		\notag
	\end{align*}

Using PIETOOLS, applying Thm.~\ref{thm:stability_as_LPI}, stability of this system can be verified for any delay $\tau<0.15$.

\end{appendices}
\end{document}